\numberwithin{equation}{section}
\theoremstyle{plain} 
\newtheorem{theorem}{Theorem}[section]
\newtheorem{definition}[theorem]{Definition}
\newtheorem{remark}[theorem]{Remark}
\newtheorem{example}[theorem]{Example}
\newtheorem{conjecture}[theorem]{Conjecture}
\newcommand{\N}{\mathbb N}
\newcommand{\R}{\mathbb R}
\newcommand{\dd}{\mathrm{d}}
\newcommand{\ee}{\mathrm{e}}
\newcommand{\prob}{\overset{\mathbb{P}}{\longrightarrow}}
\newcommand{\weak}{\Longrightarrow}
\newcommand{\E}{\mathbb E}
\renewcommand{\P}{\mathbb P}
\newcommand{\HER}{\mathrm{HER}}
\newcommand{\IER}{\mathrm{IER}}
\newcommand{\CM}{\mathrm{CM}}
\newcommand{\PAM}{\mathrm{PAM}}
\newcommand{\cF}{\mathcal{F}}
\definecolor{mydarkblue}{RGB}{0,0,139}
\newcommand{\no}{N^{(\textsf{old})}}
\newcommand{\ny}{N^{(\textsf{young})}}
\begin{document}

\title{The friendship paradox for sparse random graphs}

\author{
\renewcommand{\thefootnote}{\arabic{footnote}}
Rajat Subhra Hazra, Frank den Hollander, Azadeh Parvaneh
\footnotemark[1]
}

\footnotetext[1]{
Mathematical Institute, Leiden University, Einsteinweg 55, 2333 CC Leiden, The Netherlands.\\ 
{\tt \{r.s.hazra,denholla,s.a.parvaneh.ziabari\}@math.leidenuniv.nl}
}

\maketitle

\abstract{Let $G_n$ be an undirected finite graph on $n\in\N$ vertices labelled by $[n] = \{1,\ldots,n\}$. For $i \in [n]$, let $\Delta_{i,n}$ be the \emph{friendship bias} of vertex $i$, defined as the difference between the average degree of the neighbours of vertex $i$ and the degree of vertex $i$ itself when $i$ is not isolated, and zero when $i$ is isolated. Let $\mu_n$ denote the \emph{friendship-bias empirical distribution}, i.e., the measure that puts mass $\frac{1}{n}$ at each $\Delta_{i,n}$, $i \in [n]$. The friendship paradox says that $\int_\R x\mu_n(\dd x) \geq 0$, with equality if and only if in each connected component of $G_n$ all the degrees are the same.  

We show that if $(G_n)_{n\in\N}$ is a sequence of sparse random graphs that converges to a rooted random tree in the sense of convergence locally in probability, then $\mu_n$ converges weakly to a limiting measure $\mu$ that is expressible in terms of the law of the rooted random tree. We study $\mu$ for four classes of sparse random graphs: the homogeneous Erd\H{o}s-R\'enyi random graph, the inhomogeneous Erd\H{o}s-R\'enyi random graph, the configuration model and the preferential attachment model. In particular, we compute the first two moments of $\mu$, identify the right tail of $\mu$, and argue that $\mu([0,\infty))\geq\tfrac12$, a property we refer to as \emph{friendship paradox significance}.  
}

\medskip\noindent
\emph{Keywords:}
Sparse random graphs, local convergence, friendship bias.

\medskip\noindent
\emph{MSC2020:} 
05C80, 
60C05, 
60F15, 
60J80. 

\medskip\noindent
\emph{Acknowledgement:}
The research in this paper was supported through NWO Gravitation Grant NETWORKS 024.002.003. AP has received funding from the European Union's Horizon 2020 research and innovation programme under the Marie Sk\l odowska-Curie grant agreement Grant Agreement No 101034253. The authors are grateful to Rob van den Berg, Remco van der Hofstad and Nelly Litvak for helpful discussions.

\vspace{0.1cm}
\hfill\includegraphics[scale=0.1]{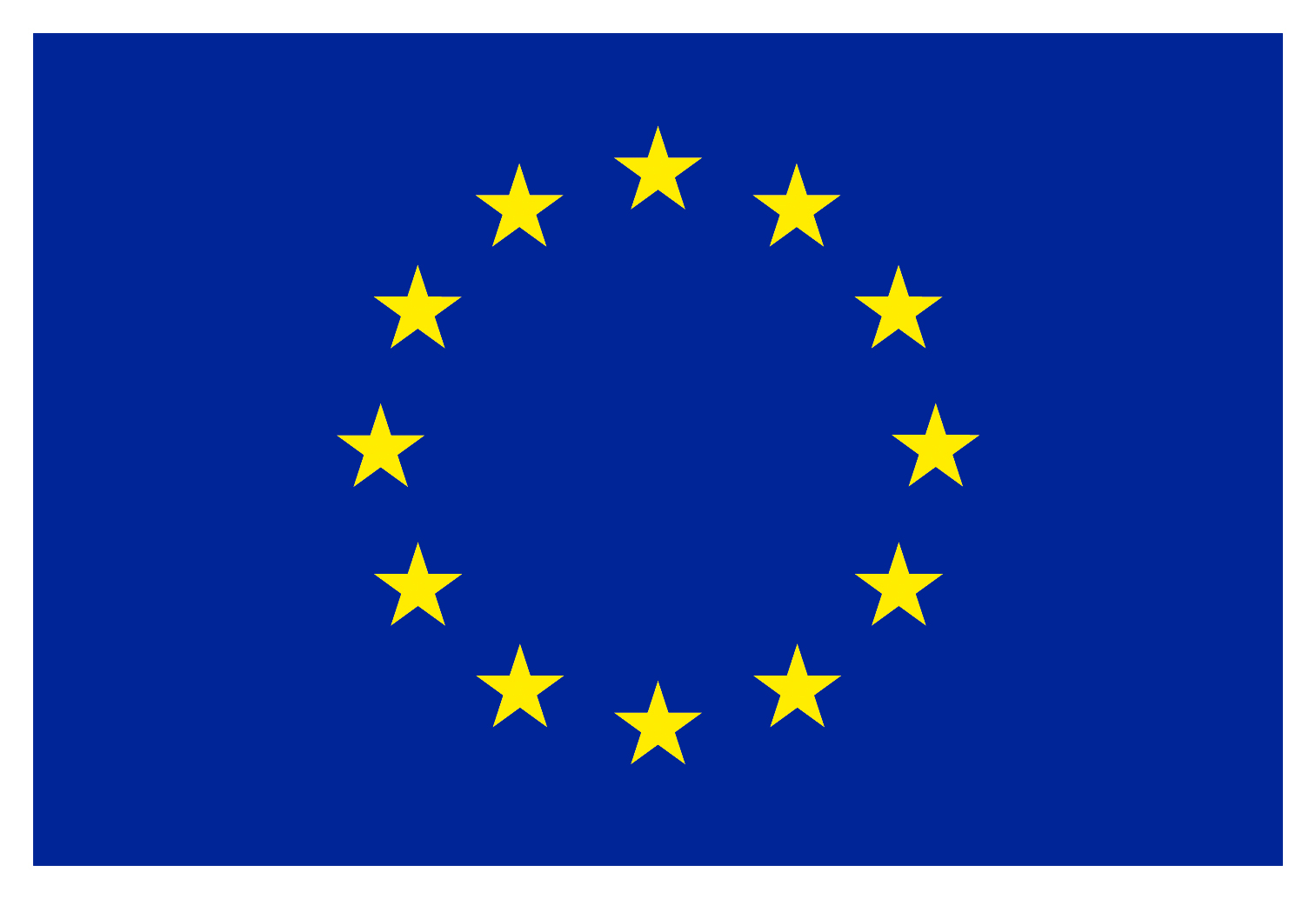}


\newpage
\small
\tableofcontents
\normalsize
\newpage


\section{Introduction and outline}
\label{sec:intro}


\subsection{Background and motivation}

In 1991, the American sociologist Scott Feld discovered the paradoxical phenomenon that `your friends are more popular than you' \cite{SF}.  This statement means the following. Consider a group of individuals who form a connected friendship network. For each individual in the group, compute the difference between the average number of friends of friends and the number of friends (all friendships in the group are mutual). Average these numbers over all the individuals in the group. It turns out that the latter average is always non-negative, and is strictly positive as soon as not all individuals have exactly the same number of friends. This bias, which at first glance seems counterintuitive, goes under the name of \emph{friendship paradox}, even though it is a hard fact. An equivalent, and possibly more soothing, version of the paradox reads `your enemies have more enemies than you' \cite{PMD}.


\paragraph{Implications.}

Apart from being interesting in itself, the friendship paradox has useful implications. For instance, it can be used to slow down the spread of an infectious disease. Suppose that there is a group of individuals whose friendship network we do not know explicitly. Suppose that an infectious disease breaks out and we only have one vaccine at our disposal, which we want to use as effectively as possible. In other words, we want to vaccinate the individual who has the most friends. One way we could do this is by choosing an individual at random and giving the vaccine to him or her. Another way could be to select an individual at random, and let him or her choose a friend at random to give the vaccine to. Since the more popular individuals are more likely to be chosen, the second approach is more effective in combating the disease.

The friendship bias can be viewed as a \emph{centrality measure}, akin to PageRank \cite{PBMW} centrality and degree centrality. In \cite{GRL2020}, numerous intriguing features of PageRank were explored in the context of sparse graphs with the help of local weak convergence. Similar interesting features emerge in the analysis of the friendship bias. In \cite{CKN} and \cite{PYNSB}, the behaviour of the friendship paradox for random graphs like the Erd\H{o}s-R\'enyi random graph was studied. Through an empirical analysis of the limit as $n\to\infty$ of Erd\H{o}s-R\'enyi random graphs with $n$ vertices and connection probability $\lambda/(n-1)$ conditioned not to have isolated points, \cite{CKN} concluded that ``for large $\lambda$ no meaningful friendship paradox applies" (a statement that we will refute in the present paper). Moreover, \cite{CKN} examined a special case of the configuration model and provided an empirical analysis by using kernel density estimation. Furthermore, a random network model was proposed with degree correlation and, with the help of the notion of Shannon entropy, an attempt was made to investigate the impact of the assortativity coefficient on the friendship paradox.
 
There have also been studies on what is called the \emph{generalised friendship paradox}, in which attributes other than popularity are considered that produce a similar paradox \cite{EJ}. For instance, an analysis of two co-authorship networks of Physical Review journals and Google Scholar profiles reveals that on average the co-authors of a person have more collaborations, publications and citations than that person \cite{EJ}. On Twitter, for most users on average their friends share and tweet more viral content, and over 98 percent of the users have fewer followers than the people whom they followed \cite{HKL, J}. In \cite{CKN}, it is shown in an informal way that the generalised friendship paradox holds when the attribute correlates positively with popularity. Other works have examined the implications of the friendship paradox for individual biases in perception and thought contagion, noting that our social norms are influenced by our perceptions of others, which are strongly shaped by the people around us \cite{J}. For instance, individuals whose acquaintances smoke are more likely to smoke themselves \cite{CF}. The impact of the friendship paradox on behaviour explains why we should expect more-connected people to behave systematically differently from less-connected people, which in turn biases overall behaviour in society \cite{J}. In \cite{NK}, a new strategy based on the friendship paradox is imposed to improve poll predictions of elections, instead of randomly sampling individuals and asking such questions as `Who are you voting for? Who do you think will win?' In \cite{CR}, the friendship paradox is looked at from a probabilistic point of view, and it is shown that a randomly chosen friend of a randomly chosen individual stochastically has more friends than that individual.


\paragraph{Mathematical modelling.}

The friendship paradox raises many questions. `How large is the bias? How does it depend on the architecture of the friendship network? Are there universal features beyond the fact that the bias is always non-negative?' Mathematically, the friendship structure is modelled by a graph, where the vertices represent individuals and the edges represent friendships. The graph is typically random and its size is typically large. In the present paper we take a \emph{quantitative} look at the friendship paradox for \emph{sparse random graphs}. We focus on the \emph{friendship-bias empirical distribution}, i.e., the distribution of the biases of all the individuals in the network. We show that if a sequence of sparse random graphs converges to a rooted random tree in the sense of convergence locally in probability, then the friendship-bias empirical distribution converges weakly to a limiting measure that is expressible in terms of the law of the rooted random tree. We study this limiting measure for four classes of sparse random graphs. In particular, we compute its first two moments, identify its right tail, and argue that it puts at least one half of its mass on non-negative biases, a property we refer to as \emph{friendship paradox significance}.


\subsection{Friendship paradox}
\label{FPintro}


\paragraph{Random graphs.}

Throughout the sequel, $G = (V(G),E(G))$ is an undirected simple graph or multi-graph. The vertices $V(G)$ represent individuals, the edges $E(G)$ represent mutual friendships. Denote by
\begin{align*}
(d_{i}^{(G)})_{i \in V(G)}, \qquad A^{(G)} = (A_{i,j}^{(G)})_{i,j\in V(G)},
\end{align*}
the \emph{degree sequence}, respectively, the \emph{adjacency matrix} of $G$, where $A_{i,j}^{(G)}$ is the number of edges between vertices $i$ and $j$ when $j\neq i$, and twice the number of self-loops at vertex $i$ when $j=i$ (i.e., each self-loop adds 2 to the degree).

Let $G_{n}$ be a finite graph on $n\in\N$ vertices labelled by $[n] = \{1,\ldots,n\}$. For $i \in [n]$, define the \emph{friendship bias} as
\begin{align*}
\Delta_{i,n} = \left[\dfrac{\sum_{j \in [n]} A_{i,j}^{(G_{n})} d_{j}^{(G_{n})}}{d_{i}^{(G_{n})}} - d_{i}^{(G_{n})}\right]
\,\mathbbm{1}_{\{d_{i}^{(G_{n})} \neq 0\}}.
\end{align*}
Let $\mu_{n}\colon\,\mathcal{B}(\R) \to [0,1]$ be the \emph{quenched friendship-bias empirical distribution} 
\begin{align*}
\mu_{n} = \dfrac{1}{n}\sum_{i \in [n]} \delta_{\Delta_{i,n}},
\end{align*}
where $\delta_{x}$ denotes the Dirac measure concentrated at $x$, and $\mathcal{B}(\R)$ the Borel $\sigma$-algebra on $\R$. Let $\tilde{\mu}_{n}\colon\,\mathcal{B}(\R) \to [0,1]$ be the \emph{annealed friendship-bias empirical distribution} defined by 
\begin{equation*}
\tilde{\mu}_{n}(\cdot) = \E_{n}[\mu_{n}(\cdot)],
\end{equation*}
where $\E_{n}$ denotes expectation with respect to the random graph $G_{n}$. 

The \emph{average friendship bias} is defined as
\begin{equation*}
\Delta_{[n]} = \frac{1}{n} \sum_{i \in [n]} \Delta_{i,n} = \int_{\R} x\mu_{n}(\dd x). 
\end{equation*}
The choice to count each self-loop as contributing $2$ to the degree of the vertex guarantees that $\sum_{j \in [n]} A_{i,j}^{(G_n)} = d_i$. Hence we can write
\begin{align*}
\Delta_{[n]} &= \dfrac{1}{n} \sum_{ {i \in [n]} \atop {d_{i}^{(G_{n})} \neq 0} } \sum_{ { j \in [n]} \atop {d_{j}^{(G_{n}) } \neq 0}} A_{i,j}^{(G_{n})}
\left(\dfrac{d_{j}^{(G_{n})}}{d_{i}^{(G_{n})}}-1\right)\\
&= \dfrac{1}{2n} \sum_{ {i \in [n]} \atop {d_{i}^{(G_{n})} \neq 0} } \sum_{ { j \in [n]} \atop {d_{j}^{(G_{n}) } \neq 0}}A_{i,j}^{(G_{n})}
\left(\sqrt{\dfrac{d_{j}^{(G_{n})}}{d_{i}^{(G_{n})}}}-\sqrt{\dfrac{d_{i}^{(G_{n})}}{d_{j}^{(G_{n})}}}\,\right)^{2}
\geq 0,
\end{align*}
with equality if and only if all connected components of $G_{n}$ are regular. This property is known as the \emph{friendship paradox}, because if the edges of the graph represent mutual friendships, then $\Delta_{[n]}>0$ means that in a community with $n$ individuals on average the friends of an individual have more friends than the individual itself. 


\paragraph{Rooted random graphs.}

In the sequel we will consider a sequence $(G_n)_{n\in\N}$ of finite random graphs that locally converges to an almost surely locally finite, connected \emph{rooted random graph} $G$ with root vertex $o$, denoted by the pair $(G,o)$. The friendship-bias of the root is
\begin{align}
\label{Dphidefalt}
\Delta_{o}^{(G)} =\bigg[\dfrac{\sum_{j\in V(G)} A_{o,j}^{(G)} d_{j}^{(G)}}{d_{o}^{(G)}}-d_{o}^{(G)}\bigg]\,
\mathbbm{1}_{\big\{d_{o}^{(G)}\neq 0\big\}}.
\end{align}
We write $\mu\colon\,\mathcal{B}(\R) \to [0,1]$ to denote the law of $\Delta_{o}^{(G)}$.


\paragraph{Outline.} 

In Section~\ref{sec:locconv} we use the theory of local convergence of sparse random graphs to show that the friendship-bias empirical distribution $\mu_n$ converges to a limit $\mu$ as $n\to\infty$. The convergence is stated in two theorems, one for the quenched empirical distribution and one for the annealed empirical distribution. In Section~\ref{sec:proofs} we give the proof of these two theorems. In Section~\ref{sec:ex} we study $\mu$ for four classes of sparse random graphs: the homogeneous Erd\H{o}s-R\'enyi random graph, the inhomogeneous Erd\H{o}s-R\'enyi random graph, the configuration model and the preferential attachment model. For each of the four classes we state three theorems, in which we compute the first two moments of $\mu$, identify the right tail of $\mu$, and argue that $\mu([0,\infty))>\tfrac12$, a property we refer to as \emph{friendship paradox significance}. Brief sketches of the main ideas behind the proofs of these theorems are included. Full proofs can be found in Section \ref{sec:proofs2}.


\paragraph{Notation.} 

For sequences $(a_n)_{n\in\N}$ and $(b_n)_{n\in\N}$ of real numbers we write $a_{n} \sim b_{n}$ when $\lim_{n\to\infty} a_{n}/b_{n}=1$, $a_{n} \asymp b_{n}$ when $0 < \liminf_{n\to\infty} a_{n}/b_{n} \leq \limsup_{n\to \infty} a_{n}/b_{n} < \infty$, $a_{n} \lesssim b_{n}$ when $\limsup_{n\to\infty} \frac{a_{n}}{b_n} < \infty$, and $a_{n} \gtrsim b_{n}$ when $\liminf_{n\to\infty} \frac{a_{n}}{b_n} > 0$.


\section{Local convergence: main theorems}
\label{sec:locconv}

The asymptotic behaviour of the empirical distribution $\mu_{n}$ as $n\to\infty$ provides information on the friendship paradox for large complex networks. Theorem \ref{thm1} below shows that $\mu_{n}$  converges weakly to $\mu$ in probability for all locally tree-like random graphs, and quantify the friendship paradox.  

We start by setting up the notion of local convergence for random graphs from \cite[Chapter 2]{RvdH2}. Let $B_{r}^{(G)}(o)$ denote the rooted subgraph of $(G,o)$ in which all vertices have graph distance at most $r$ to $o$, i.e., if $\text{dist}_{G}$ denotes the graph distance in $G$, then 
\begin{equation*}
B_{r}^{(G)}(o) = \big((V(B_{r}^{(G)}(o)),E(B_{r}^{(G)}(o))),o\big)
\end{equation*}
with 
\begin{equation*}
\begin{aligned}
V(B_{r}^{(G)}(o)) &= \{i\in V(G)\colon\,\text{dist}_{G}(o,i)\leq r\},\\
E(B_{r}^{(G)}(o)) &= \{e\in E(G)\colon\,e=\{i,j\},\,\max\{\text{dist}_{G}(o,i),\text{dist}_{G}(o,j)\}\leq r\},
\end{aligned}
\end{equation*}
where for representing an edge we allow multisets. Write $G_1 \simeq G_2$ when $G_1$ and $G_2$ are \emph{isomorphic}. Let $\mathscr{G}$ be the set of all connected locally finite rooted graphs equipped with the metric
\begin{align*}
\mathrm{d}_{\mathscr{G}}\big((G_{1},o_{1}),(G_{2},o_{2})\big)
= \left(1+\sup\big\{r\geq 0\colon\,B_{r}^{(G_1)}(o_1) \simeq B_{r}^{(G_2)}(o_2)\big\}\right)^{-1},
\end{align*}
with the convention that two connected locally finite rooted graphs $(G_{1},o_{1})$ and $(G_{2},o_{2})$ are identified when $(G_{1},o_{1}) \simeq (G_{2},o_{2})$. 

\begin{definition}
\label{deflc}
{\rm Let $(G_{n})_{n\in\N}$ be a sequence of finite random graphs. For $n\in\N$, let $\mathscr{C}(G_{n},U_{n})$ denote the connected component of a uniformly chosen vertex $U_{n} \in V(G_{n})$ in $G_{n}$, viewed as a rooted graph with root vertex $U_{n}$.
\begin{itemize}
\item[(a)] 
$G_{n}$ converges \emph{locally weakly} to $(G,o)\in \mathscr{G}$ with (deterministic) law $\tilde{\nu}$ if, for every bounded and continuous function $h\colon\,\mathscr{G}\to\R$,
\begin{align*}
\E_{\tilde{\nu}_{n}}\big[h(\mathscr{C}(G_{n} ,U_{n}))\big] \to \E_{\tilde{\nu}}\big[h((G,o))\big],
\end{align*}
where $\E_{\tilde{\nu}_{n}}$ is the expectation with respect to the random vertex $U_{n}$ and the random graph $G_{n}$ with joint law $\tilde{\nu}_{n}$, while $\E_{\tilde{\nu}}$ is the expectation with respect to $(G, o)$ with law $\tilde{\nu}$.
\item[(b)] 
$G_{n}$ converges \emph{locally in probability} to $(G,o)\in \mathscr{G}$ with (possibly random) law $\tilde{\nu}$ if, for every bounded and continuous function $h\colon\,\mathscr{G}\to\R$,
\begin{align*}
\E_{\tilde{\nu}_{n}}\big[h(\mathscr{C}(G_{n} ,U_{n})) \mid G_{n}\big] \prob\E_{\tilde{\nu}}\big[h((G,o))\big].
\end{align*}
\end{itemize}
}\hfill$\spadesuit$
\end{definition}

\begin{theorem}
\label{thm1}
If $(G_{n})_{n\in\N}$ converges locally in probability to the almost surely locally finite, connected rooted random graph $(G,o)$, then $\mu_{n} \weak \mu$ as $n\to\infty$ in probability. In particular, as $n\to\infty$,
\begin{align*}
\mu_{n} (A) \prob \mu (A) \quad \quad \text{and} \quad \quad
\tilde{\mu}_{n} (A) \to \mu (A) \qquad \forall\,A \in \mathcal{B}(\R).
\end{align*}
\end{theorem}

\begin{theorem}
\label{thm2}
If $(G_{n})_{n\in\N}$ converges locally weakly to the almost surely locally finite, connected rooted random graph $(G,o)$, then $\tilde{\mu}_{n} \weak \mu$ as $n\to\infty$. In particular, as $n\to\infty$,
\begin{align*}
\tilde{\mu}_{n} (A) \to \mu (A) \qquad \forall\,A \in \mathcal{B}(\R).
\end{align*}
\end{theorem}

\begin{remark}
{\rm (a) For the special case where $(G,o)$ is an almost surely locally finite rooted random \emph{tree} $(G_{\infty},\phi)$, the friendship-bias of the root $\phi$ is
\begin{align}
\label{Dphidef}
\Delta_{\phi} = \left[\dfrac{1}{d_{\phi}} \sum_{j=1}^{d_{\phi}} (d_{j}+1)-d_{\phi}\right] 
\,\mathbbm{1}_{\{d_{\phi}\neq 0\}},
\end{align}
where $d_{\phi}$ is the degree of $\phi$, and $d_{j}$ is the size of the offspring of neighbour $j \in [d_{\phi}] = \{1,\ldots,d_{\phi}\}$ of $\phi$ (labelled in an arbitrary ordering). We write $\mu\colon\,\mathcal{B}(\R) \to [0,1]$ to denote the law of $\Delta_{\phi}$, and $\bar{\mu}$ to denote the law of $(G_{\infty},\phi)$. All examples treated in Section~\ref{sec:ex} have almost surely locally finite rooted random trees as local limits.\\ 
(b) For the special case where $(G_{\infty},\phi)$ is a rooted \emph{Galton-Watson tree} in which each individual in the population independently gives birth to a random number of children in $\N_{0}$ according to a common law $\nu$, the measure $\mu$ equals
\begin{align}
\label{mu}
\mu(\cdot) = \nu(0) \delta_{0}(\cdot) + \sum_{k\in\N} \nu(k)\nu^{\circledast k}\big(k(\cdot+k-1)\big),
\end{align}
where $\nu(k) = \nu(\{k\})$ is the probability that an individual has $k$ children, and $\nu^{\circledast k}$ is the $k$-fold convolution of $\nu$.
}\hfill$\spadesuit$
\end{remark}

We already know from Section~\ref{FPintro} that $\int_{\R} x \mu_n(\dd x) \geq 0$, and from Theorem \ref{thm1} that for all locally tree-like random graphs,
\begin{align*}
\dfrac{1}{n}\sum_{i \in [n]} \delta_{\Delta_{i,n}}([0,\infty))\prob \mu ([0,\infty)),\quad\quad n\to\infty.
\end{align*}
In other words, the proportion of vertices with non-negative friendship bias converges in probability to the probability that the friendship bias of the root of the limiting rooted tree is non-negative. If $\mu([0,\infty)) \geq\tfrac12$, then for large $n$ at least half of the vertices in $G_n$ have a degree that is larger than or equal to that of their neighbours. We therefore divide the friendship paradox for locally tree-like random graphs into two classes:

\begin{definition}
{\rm We say that the friendship paradox is \emph{significant} when $\mu([0,\infty)) \geq \tfrac12$ and \emph{insignificant} when $\mu([0,\infty)) < \tfrac12$. We say that the friendship paradox is \emph{marginally significant} when $\mu([0,\infty))= \tfrac{1}{2}$.}\hfill$\spadesuit$
\end{definition}

In Section~\ref{sec:ex} we analyse $\mu$ for four classes of sparse random graphs: the homogeneous Erd\H{o}s-R\'enyi random graph (Section~\ref{sec:HER}), the inhomogeneous Erd\H{o}s-R\'enyi random graph (Section~\ref{sec:IER}), the configuration model (Section~\ref{sec:CM}), and the preferential attachment model (Section~\ref{sec:PAM}). We prove that the friendship paradox is significant for specific one-parameter classes of configuration models and preferential attachment models. Furthermore, we demonstrate numerically that the friendship paradox is significant also for the homogeneous Erd\H{o}s-R\'enyi random graph and the inhomogeneous Erd\H{o}s-R\'enyi random graph. 

As part of our analysis of significance we propose the following conjecture:

\begin{conjecture}
\label{conjecture1}
Let $(X_i)_{i\in\N_0}$ be i.i.d.\ Binomial, Geometric or Poisson random variables, defined on a probability space $(\Omega, \mathcal{F},\P)$. Then 
\begin{equation*}
\P\bigg\{\sum_{i=1}^{X_{0}}X_{i}\geq X_{0}(X_{0}-1)\bigg\} \geq \frac{1}{2}
\end{equation*}
with the empty sum equal to zero.
\end{conjecture}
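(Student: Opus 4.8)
The plan is to prove the inequality separately for each of the three families, exploiting the special structure in each case, rather than looking for a single unified argument. In every case the event is $\{S \geq X_0(X_0-1)\}$ where $S = \sum_{i=1}^{X_0} X_i$ is, conditionally on $X_0 = k$, a sum of $k$ i.i.d.\ copies of the common law; unconditionally $S$ is a compound distribution. First I would dispose of the degenerate contributions: on $\{X_0 = 0\}$ and $\{X_0 = 1\}$ the right-hand side is $\leq 0$ while $S \geq 0$, so these events lie entirely inside the target event and contribute $\P\{X_0 \leq 1\}$ to the probability for free. Thus it suffices to show
\begin{equation*}
\sum_{k\geq 2} \P\{X_0 = k\}\,\P\bigg\{\sum_{i=1}^{k} X_i \geq k(k-1)\bigg\} \;\geq\; \tfrac12 - \P\{X_0 \leq 1\},
\end{equation*}
and in fact I expect the stronger fact that $\P\{\sum_{i=1}^k X_i \geq k(k-1)\} \geq \tfrac12$ for \emph{every} fixed $k \geq 1$ in the Poisson and Binomial cases (and something close to it for Geometric), which would immediately finish the proof. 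The point is that $\E[\sum_{i=1}^k X_i] = k\lambda$ where $\lambda = \E[X_1]$, and $k(k-1)$ sits below this mean precisely when $\lambda \geq k-1$; the interesting regime is therefore $\lambda < k - 1$, i.e.\ moderately large $k$, where one needs a concentration/anti-concentration estimate showing the sum of $k$ i.i.d.\ terms still exceeds $k(k-1)$ with probability at least one half.

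For the Poisson case with parameter $\lambda$, the sum $\sum_{i=1}^k X_i$ is itself $\mathrm{Poisson}(k\lambda)$, so the claim reduces to the purely one-dimensional statement that a $\mathrm{Poisson}(k\lambda)$ variable exceeds $k(k-1)$ with probability $\geq \tfrac12$; here I would use the known fact about the median of a Poisson law (the median of $\mathrm{Poisson}(t)$ is at least $t - \ln 2$, a result of Choi), so it is enough that $k\lambda - \ln 2 \geq k(k-1) - 1$, which holds when $\lambda \geq k - 1$, and for $\lambda < k-1$ one needs a genuinely different argument — this is where I would combine the median bound with the observation that we are summing over $k \geq 2$ against the weights $\P\{X_0 = k\} = \ee^{-\lambda}\lambda^k/k!$, which decay superexponentially, so the small-$\lambda$ tail contributes negligibly and the bulk $k \approx \lambda$ is handled by the median estimate. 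For the Binomial case $X_i \sim \mathrm{Bin}(m,p)$, the sum is $\mathrm{Bin}(km, p)$, and one uses the analogous fact that the median of a binomial is $\lfloor mp\rfloor$ or $\lceil mp\rceil$, reducing to $kmp \gtrsim k(k-1)$; the same weighting argument applies since now $X_0$ is bounded by $m$, so only finitely many values of $k$ occur and each can in principle be checked, though one wants a uniform bound. For the Geometric case the compound sum is a negative-binomial-type variable and I would again pass to its explicit distribution and use a median estimate, or alternatively exploit the memoryless structure.

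The main obstacle I anticipate is the regime where $\lambda$ (or $mp$) is small but the event still needs the full weight of the $X_0$-distribution: when $\lambda$ is small, $\P\{X_0 = k\}$ for $k \geq 2$ is tiny, so $\P\{X_0 \leq 1\}$ is already close to $1$ and the inequality is comfortable; when $\lambda$ is large, $\P\{\sum_{i=1}^k X_i \geq k(k-1)\} \geq \tfrac12$ holds for all relevant $k$ by the median bound; the delicate window is intermediate $\lambda$, say $\lambda \in (1, 10)$ for Poisson, where neither mechanism alone is decisive and one must track the interplay between the decay of $\P\{X_0 = k\}$ in $k$ and the growth of the threshold $k(k-1)$ relative to the mean $k\lambda$. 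I would handle this by a careful splitting at $k^\ast \approx \lambda + 1$: for $k \leq k^\ast$ the conditional probability is $\geq \tfrac12$ via the median estimate, contributing $\geq \tfrac12 \P\{X_0 \leq k^\ast\}$, and for $k > k^\ast$ one bounds the lost mass $\P\{X_0 > k^\ast\}$ using a Chernoff tail for $X_0$, showing it is small enough that $\tfrac12 \P\{X_0 \leq k^\ast\} + (\text{contribution from } k \leq 1) \geq \tfrac12$; making this splitting quantitative and uniform in the parameter is the crux, and is presumably why the statement is posed as a conjecture rather than proved outright.
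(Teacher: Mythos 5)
The statement you are proving is posed in the paper as Conjecture~\ref{conjecture1}; the paper itself gives no proof, only numerical evidence (Figure~\ref{fig2}) and the asymptotic results of Theorems~\ref{thm1HER} and \ref{thm1IER}, so there is no ``paper proof'' to match your plan against. More importantly, your proposal is a strategy sketch with the decisive step missing, and the step as sketched cannot work. Your plan for, say, the Poisson case is: split at $k^{\ast}\approx\lambda+1$, use a median bound to get conditional probability $\geq\tfrac12$ for $k\leq k^{\ast}$, discard the terms $k>k^{\ast}$, and argue via a Chernoff bound that $\P\{X_{0}>k^{\ast}\}$ is small enough to be compensated by the contribution from $\{X_{0}\leq 1\}$. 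But $k^{\ast}\approx\lambda+1$ sits essentially at the mean of $X_{0}$, so $\P\{X_{0}>k^{\ast}\}\to\tfrac12$ as $\lambda\to\infty$ (it is not a tail event and Chernoff gives no decay), while the compensating term $\P\{X_{0}\leq 1\}=(1+\lambda)\ee^{-\lambda}$ is exponentially small. Your lower bound is then roughly $\tfrac12\P\{X_{0}\leq k^{\ast}\}+\tfrac12\P\{X_{0}\leq 1\}\approx\tfrac14$, far below $\tfrac12$. The deeper reason no such lossy splitting can succeed is that the inequality is asymptotically \emph{tight}: the paper proves (Theorem~\ref{thm1HER}, via a CLT analysis of exactly this probability) that $\lim_{\lambda\to\infty}\P\{\sum_{i=1}^{X_0}X_i\geq X_0(X_0-1)\}=\tfrac12$ in the Poisson case, so for large $\lambda$ there is essentially no slack, and any argument must keep the near-critical values $k\approx\lambda+1$, where the conditional probabilities hover around $\tfrac12$, and control from \emph{which side} they approach $\tfrac12$ --- precisely the delicate point your plan discards.

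Two further gaps: your ``stronger fact'' that $\P\{\sum_{i=1}^{k}X_i\geq k(k-1)\}\geq\tfrac12$ for every fixed $k$ is false whenever $k$ exceeds roughly $\lambda+1$ (e.g.\ Poisson with $\lambda=1$, $k=5$: a $\mathrm{Poisson}(5)$ variable exceeds $20$ with negligible probability), so it cannot ``immediately finish the proof'' as you assert; in the Binomial case the observation that $X_{0}\leq m$ forces only finitely many $k$ does not help, because the bound must be uniform in the two parameters $(m,p)$, so no finite check closes the argument; and the Geometric case is not addressed beyond naming the negative-binomial structure. You correctly identify the intermediate regime as the crux and candidly say it is unresolved --- which is consistent with the fact that the statement remains a conjecture --- but as it stands the proposal does not constitute a proof, and its central splitting mechanism fails in the large-parameter regime where the inequality is sharpest.
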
 

\noindent
Conjecture \ref{conjecture1} says that if $d_{\phi},d_{1},d_{2},\ldots$ are i.i.d.\ Binomial, Geometric or Poisson random variables, then the friendship paradox is significant (recall \eqref{Dphidef}). We believe that this conjecture also holds for many other distributions with support in $\N_0$ (and that the i.i.d.\ assumption can be relaxed), but it does not hold for general discrete distributions with support in $\N_0$. For example, if $(X_i)_{i\in\N_0}$ are i.i.d.\ copies of 
\begin{align*}
X=\left\{
\begin{array}{ll}
1, &\text{with probability } \tfrac{1}{5},\\[0.2cm]
20, &\text{with probability } \tfrac{4}{5},
\end{array} 
\right.
\end{align*}
then $\P\{\sum_{i=1}^{X_{0}} X_{i}\geq X_{0}(X_{0}-1)\} = \tfrac{1}{5}+6\big(\tfrac{4}{5}\big)^{21}<\tfrac{1}{2}$. Moreover, the inequality is trivial for the Bernoulli distribution, while for the $\mathrm{Poisson}(\lambda)$ distribution it holds for sufficiently small values of $\lambda$, as demonstrated by the estimate 
\begin{align*}
\P\left\{\sum_{i=1}^{X_{0}}X_{i}\geq X_{0}(X_{0}-1)\right\} 
&\geq \E\Big[\Big(\frac{1}{2}+\frac{1}{2}\,\P\big\{X_{1}=X_{0}\big\}
+\frac{1}{2}\,\P\big\{X_{1}=X_{0}-1\big\}\Big)^{X_{0}}\Big] \\
&\geq\E\Big[\Big(\frac{1+\mathrm{e}^{-\lambda}}{2}\Big)^{X_{0}}\Big]
=\mathrm{e}^{-\frac{1}{2}\lambda(1-\mathrm{e}^{-\lambda})},
\end{align*}
where $\E$ denotes expectation with respect to $\P$.


\section{Proof of the local convergence theorems}
\label{sec:proofs}

In this section we provide the proofs of Theorems~\ref{thm1}--\ref{thm2}. The proofs are an application of the continuous mapping theorem in the setting of local convergence introduced in Definition~\ref{deflc}.


\subsection{Local convergence}
\label{sec:conv}


\subsubsection{Convergence locally in probability}

\begin{proof}[Proof of Theorem \ref{thm1}]
We replace $(G,o)$ by a connected locally finite rooted random graph $(G^{\prime},o)$ defined by
\begin{align*}
(G^{\prime}(\omega),o) = 
\left\{
\begin{array}{ll}
(G(\omega),o), &\text{if }(G(\omega),o) \text{ is locally finite}, \\
(H,o), &\text{otherwise},
\end{array} \right.
\end{align*}
where $(H,o)$ is an arbitrary connected locally finite deterministic graph rooted at $o$. Without loss of generality, we simplify the notation by assuming that $(G,o) \equiv (G^{\prime},o)$. Since $G_{n}$ converges locally in probability to $(G,o)$, for every bounded and continuous function $h\colon\,(\mathscr{G},\mathrm{d}_{\mathscr{G}}) \to (\R,\vert\cdot\vert)$ we have 
\begin{equation}
\label{a-1}
\E_{\bar{\mu}_{n}}\big[h(\mathscr{C}(G_{n},U_{n})) \mid G_{n}\big] \prob \E_{\bar{\mu}}\big[h((G,o))\big],
\end{equation}
where the expectation in the left-hand side is the conditional expected value of $h(\mathscr{C}(G_{n},U_{n}))$ given $G_{n}$ when $(G_{n},U_{n})$ has the joint law $\bar{\mu}_{n}$, and $\E_{\bar{\mu}}$ denotes expectation with respect to $(G,o)$ with a law $\bar{\mu}$. Since
\begin{align*}
\E_{\bar{\mu}_{n}}\big[h(\mathscr{C}(G_{n},U_{n})) \mid G_{n}\big] = \dfrac{1}{n} \sum_{i \in [n]} h(\mathscr{C}(G_{n},i)),
\end{align*}
where $\mathscr{C}(G_{n},i)$ is the connected component of $i$ in $G_{n}$ viewed as a rooted graph with root vertex $i$, it follows from \eqref{a-1} that
\begin{equation}
\label{a-2}
\dfrac{1}{n} \sum_{i \in [n]} h(\mathscr{C}(G_{n},i)) \prob \E_{\bar{\mu}}\big[h((G,o))\big]
\end{equation}
for every bounded and continuous function $h\colon\,(\mathscr{G},\mathrm{d}_{\mathscr{G}}) \to (\R ,\vert\cdot\vert)$.

Now, let $f\colon\,(\R ,\vert\cdot\vert) \to (\R ,\vert\cdot\vert)$ be an arbitrary bounded and continuous function. Define the function $h\colon\,(\mathscr{G},\mathrm{d}_{\mathscr{G}}) \to (\R,\vert\cdot\vert)$ by setting  
\begin{equation}
\label{h}
h((G,o)) = (f\circ g)((G,o)),
\end{equation}
where $g\colon\,(\mathscr{G},\mathrm{d}_{\mathscr{G}}) \to (\R,\vert\cdot\vert)$ is defined as $g((G,o))=\Delta_{o}^{(G)}$. Note that the boundedness of $f$ implies the boundedness of $h$. Also, for any $(G_1,o_1),(G_2,o_2)\in\mathscr{G}$, if $ \mathrm{d}_{\mathscr{G}}((G_1,o_1),(G_2,o_2))<\frac{1}{3}$, then 
\begin{equation*}
\sup\big\{r\geq 0\colon\,B_{r}^{(G_1)}(o_1) \simeq B_{r}^{(G_2)}(o_2)\big\}>2,
\end{equation*}
implying that $B_{r}^{(G_1)}(o_1) \simeq B_{r}^{(G_2)}(o_2)$ for all $r\in\{0,1,2\}$. Hence $g((G_1,o_1))=g((G_2,o_2))$, which means $g$ is continuous. Therefore $h$, as a composition of two continuous functions, is also continuous. But $h(\mathscr{C}(G_{n},i)) = f(\Delta_{i,n})$, and $h((G,o)) = f(\Delta_{o}^{(G)})$ a.s. Inserting this into \eqref{a-2}, we get
\begin{equation}
\label{a-3}
\dfrac{1}{n} \sum_{i \in [n]} f(\Delta_{i,n}) \prob \E_{\bar{\mu}}\big[f(\Delta_{o}^{(G)})\big].
\end{equation}
On the other hand, $\int_\R f\dd\mu_{n} = \frac{1}{n} \sum_{i \in [n]} f(\Delta_{i,n})$ and $\int_\R f\dd\mu = \E_{\bar{\mu}}[f(\Delta_{o}^{(G)})]$. Hence the convergence in \eqref{a-3} implies that
\begin{align*}
\int_\R f\dd\mu_{n} \prob \int_\R f\dd\mu,
\end{align*}
which settles the claim that $\mu_{n} \weak \mu$ as $n\to\infty$ in probability.

Finally, let $A\in\mathcal{B}(\R)$, and consider the function $h\colon\,(\mathscr{G},\mathrm{d}_{\mathscr{G}}) \to (\R,\vert\cdot\vert)$ defined by 
\begin{align}
\label{hindicator}
h((G,o))=\mathbbm{1}_{\{\Delta_{o}^{(G)}\in A\}}.
\end{align}
For any $(G_1,o_1),(G_2,o_2)\in\mathscr{G}$, if $ \mathrm{d}_{\mathscr{G}}((G_1,o_1),(G_2,o_2))<\frac{1}{3}$, then $B_{r}^{(G_1)}(o_1) \simeq B_{r}^{(G_2)}(o_2)$ for $r\in\{0,1,2\}$, implying that $h((G_1,o_1))=h((G_2,o_2))$. Hence, $h$ is continuous, and from \eqref{a-2} we have $\mu_{n}(A)\prob\mu(A)$. Applying the dominated convergence theorem, we also get that $\tilde{\mu}_{n}(A)\to\mu(A)$.
\end{proof}


\subsubsection{Convergence locally weakly}

\begin{proof}[Proof of Theorem \ref{thm2}]
We use the same notation as in the proof of Theorem \ref{thm1}. For $n\in\N$,
\begin{align*}
\tilde{\mu}_{n}(\cdot) = \dfrac{1}{n} \sum_{i \in [n]} \E_{n}[\delta_{\Delta_{i,n}}(\cdot)] 
= \dfrac{1}{n} \sum_{i \in [n]} \mu_{i,n}(\cdot),
\end{align*}
where $\mu_{i,n}(\cdot)$ is the law of $\Delta_{i,n}$. Let $f\colon\,(\R,\vert\cdot\vert) \to (\R,\vert\cdot\vert)$ be an arbitrary bounded and continuous function. We have
\begin{align*}
\int_\R f \dd\tilde{\mu}_{n} = \dfrac{1}{n} \sum_{i \in [n]} \int_\R f \dd\mu_{i,n}
=\dfrac{1}{n}\sum_{i \in [n]} \E_{n}[f(\Delta_{i,n})] = \E_{n}\left[\frac{1}{n}\sum_{i \in [n]}f(\Delta_{i,n})\right].
\end{align*}

Since $G_{n}$ converges locally weakly to $(G,o)$, for every bounded and continuous function $h\colon\,(\mathscr{G},\mathrm{d}_{\mathscr{G}}) \to (\R ,\vert\cdot\vert)$ we have 
\begin{align}\label{weaknew1}
\E_{\bar{\mu}_{n}}\big[h(\mathscr{C}(G_{n},U_{n}))\big] \to \E_{\bar{\mu}}\big[h((G,o))\big],
\end{align}
where the expectation in the left-hand side is with respect to $(G_{n},U_{n})$ with law $\bar{\mu}_{n}$. Using the fact that 
\begin{equation} \label{weaknew2}
\E_{\bar{\mu}_{n}}\big[h(\mathscr{C}(G_{n},U_{n}))\big] = \E_{n}\left[\E_{\bar{\mu}_{n}}\big[h(\mathscr{C}(G_{n},U_{n}))\mid G_{n}\big]\right]
= \E_{n}\left[\dfrac{1}{n}\sum_{i \in [n]}h(\mathscr{C}(G_{n},i))\right]  
\end{equation}
and taking $h$ as in \eqref{h}, we get
\begin{align*}
\int_\R f \dd\tilde{\mu}_{n} = \E_{n}\left[\frac{1}{n}\sum_{i \in [n]} f(\Delta_{i,n})\right] 
\to \E_{\bar{\mu}}\big[f(\Delta_{o}^{(G)})\big] = \int_\R f\dd \mu,
\end{align*}
which settles the claim that $\tilde{\mu}_{n} \weak \mu$.

Finally, let $A\in\mathcal{B}(\R)$. By substituting the bounded and continuous function $h$ defined in \eqref{hindicator} into \eqref{weaknew1}, and using \eqref{weaknew2}, we conclude that $\tilde{\mu}_{n}(A)\to\mu(A)$.
\end{proof}


\section{Four classes of sparse random graphs: main theorems}
\label{sec:ex}

In this section we focus on the computation of $\mu([0,\infty))$, $\E_{\bar{\mu}}[\Delta_{\phi}]$ and $\E_{\bar{\mu}}[\Delta_{\phi}^{2}]$, where $\E_{\bar{\mu}}$ denotes expectation with respect to $(G_{\infty},\phi)$ with law $\bar{\mu}$. We write $\P_{\bar{\mu}}$ to denote the probability measure of the probability space on which $(G_{\infty},\phi)$ is defined. 


\subsection{Homogeneous Erd\H{o}s-R\'enyi random graph}
\label{sec:HER}

Fix $\lambda \in (0,\infty)$. For $n\in\N$, let $\HER_{n}(\frac{\lambda}{n} \wedge 1)$ be the random graph on $[n]$ where each pair of vertices is independently connected by an edge with probability $\frac{\lambda}{n} \wedge 1$. This random graph converges locally in probability to a \emph{Galton-Watson tree with a Poisson offspring distribution with mean} $\lambda$ \cite[Theorem 2.18]{RvdH2}. 

\begin{theorem}
\label{thm1HER}
For every $\lambda \in (0,\infty)$, 
\begin{equation*}
\mu([0,\infty)) =  \sum_{k\in\N_{0}} \frac{\ee^{-\lambda}\lambda^{k}}{k!} 
\sum_{l\geq k(k-1)} \dfrac{\ee^{-\lambda k}(\lambda k)^{l}}{l!}.
\end{equation*} 
In particular, $\lim_{\lambda \downarrow 0} \mu([0,\infty)) = 1$ and $\lim_{\lambda \to \infty} \mu([0,\infty)) = \frac{1}{2}$, i.e., the friendship paradox is significant for both small and large $\lambda$, and becomes marginally significant in the limit as $\lambda\to\infty$.
\end{theorem}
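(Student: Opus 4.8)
The plan is to reduce everything to an explicit Poisson computation. Since $\HER_n(\tfrac{\lambda}{n}\wedge 1)$ converges locally in probability to the Galton--Watson tree $(G_\infty,\phi)$ with $\mathrm{Poisson}(\lambda)$ offspring law, the limiting measure $\mu$ is, by definition, the law of $\Delta_\phi$ from \eqref{Dphidef} for this tree, so that $\mu([0,\infty))=\P_{\bar\mu}(\Delta_\phi\geq 0)$. Write $D:=d_\phi\sim\mathrm{Poisson}(\lambda)$ and, on $\{D=k\}$ with $k\geq 1$, let $d_1,\dots,d_k$ be the offspring sizes of the neighbours of $\phi$; by the branching property these are i.i.d.\ $\mathrm{Poisson}(\lambda)$, so $S:=\sum_{j=1}^{D}d_j$ is $\mathrm{Poisson}(k\lambda)$ given $D=k$. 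From \eqref{Dphidef}, on $\{D=k\}$ with $k\geq 1$ one has $\Delta_\phi=\tfrac1k\sum_{j=1}^{k}d_j+1-k$, hence $\{\Delta_\phi\geq 0\}=\{S\geq D(D-1)\}$, while $\Delta_\phi=0$ on $\{D=0\}$. Conditioning on $D$ and observing that the $k=0$ term contributes $\mathrm e^{-\lambda}$ (which matches the claimed formula, whose inner sum equals $1$ at $k=0$) yields the stated double sum; this is also the specialization of \eqref{mu} to the Poisson law.

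The small-$\lambda$ limit is immediate: the $k=0$ term of the double sum equals $\mathrm e^{-\lambda}\to 1$, while the remaining terms are non-negative with total at most $\sum_{k\geq 1}\mathrm e^{-\lambda}\lambda^k/k!=1-\mathrm e^{-\lambda}\to 0$, so $\mu([0,\infty))\to 1$.

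For the large-$\lambda$ limit I would use the representation $\mu([0,\infty))=\P(S\geq D(D-1))$ together with a separation-of-scales argument. The point is that $D=\lambda+\sqrt\lambda\,W$ with $W$ asymptotically standard normal (Poisson CLT), and on $\{D=k\}$ we have $k(k-1)-k\lambda=k(k-1-\lambda)$, which for $k$ in the bulk $|k-\lambda|\asymp\sqrt\lambda$ is of order $\lambda^{3/2}$ with the sign of $k-\lambda$, whereas the conditional standard deviation of $S$ is $\sqrt{k\lambda}\asymp\lambda=o(\lambda^{3/2})$. Concretely, fix $0<\delta<C$ and split according to the five windows $D<\lambda-C\sqrt\lambda$, $\ D\in[\lambda-C\sqrt\lambda,\lambda-\delta\sqrt\lambda]$, $\ |D-\lambda|<\delta\sqrt\lambda$, $\ D\in[\lambda+\delta\sqrt\lambda,\lambda+C\sqrt\lambda]$, $\ D>\lambda+C\sqrt\lambda$. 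On the second window $k(k-1)\leq k\lambda-\tfrac{\delta}{2}\lambda^{3/2}$ and on the fourth $k(k-1)\geq k\lambda+\tfrac{\delta}{2}\lambda^{3/2}$ for all $\lambda$ large; since $\mathrm{Var}(S\mid D=k)=k\lambda=O(\lambda^2)$ uniformly over $k\leq\lambda+C\sqrt\lambda$, Chebyshev's inequality gives $\P(S\geq D(D-1)\mid D=k)\to 1$ on the second window and $\to 0$ on the fourth, uniformly in $k$. Bounding the conditional probability by $1$ on the second window and by the window probability on the first, third and fifth, and invoking the Poisson CLT for $\P(D<\lambda-C\sqrt\lambda)$, $\P(D\in[\lambda-C\sqrt\lambda,\lambda-\delta\sqrt\lambda])$, $\P(|D-\lambda|<\delta\sqrt\lambda)$ and $\P(D>\lambda+C\sqrt\lambda)$, one obtains
\[
\Phi(-\delta)-\Phi(-C)\ \leq\ \liminf_{\lambda\to\infty}\mu([0,\infty))\ \leq\ \limsup_{\lambda\to\infty}\mu([0,\infty))\ \leq\ \Phi(\delta)+\Phi(-C),
\]
where $\Phi$ denotes the standard normal distribution function; letting $\delta\downarrow 0$ and $C\to\infty$ gives the limit $\tfrac12$.

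The main obstacle is the large-$\lambda$ limit, and within it the only genuinely technical point is that the Chebyshev estimates on the second and fourth windows must hold uniformly in $k$; this is precisely where the scale separation ($\lambda^{3/2}$ deviation versus $O(\lambda)$ standard deviation) is exploited, so that no sharper tool (local CLT, Berry--Esseen) is required. Everything else is bookkeeping with the Poisson central limit theorem and the elementary inequalities above.
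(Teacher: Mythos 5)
Your proposal is correct, and for the two easy parts (the double-sum formula via conditioning on $d_\phi$, i.e.\ the specialisation of \eqref{mu} to the Poisson law, and the bound $\ee^{-\lambda}\leq\mu([0,\infty))\leq 1$ for the small-$\lambda$ limit) it coincides with the paper's argument. For the hard part, $\lim_{\lambda\to\infty}\mu([0,\infty))=\tfrac12$, you take a genuinely different route. The paper normalises jointly, setting $Z_\lambda=(d_\phi-\lambda)/\sqrt\lambda$ and $Z'_\lambda=(\sum_j d_j-\lambda d_\phi)/\sqrt{\lambda d_\phi}$, rewrites the event $\{\sum_j d_j\geq d_\phi(d_\phi-1)\}$ in these variables, truncates via the event $\{|Z_\lambda|\leq\log\lambda\}$ and the sign events $\{Z_\lambda\lessgtr \pm 1/\log\lambda\}$, and then invokes the CLT and the continuous mapping theorem for both $Z_\lambda$ and $Z'_\lambda$ to show that the event asymptotically coincides with $\{Z_\lambda<0\}$, whose probability tends to $\tfrac12$. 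You instead condition on $D=d_\phi$, use only the second moment of $S\mid D=k$ (Chebyshev, uniformly over the windows $|k-\lambda|\in[\delta\sqrt\lambda,C\sqrt\lambda]$, exploiting the $\lambda^{3/2}$ versus $\lambda$ scale separation, which you verify correctly: $k|k-1-\lambda|\geq\tfrac\delta2\lambda^{3/2}$ while $\mathrm{Var}(S\mid D=k)\leq 2\lambda^2$), and need a CLT only for $D$ to evaluate the window probabilities, after which $\delta\downarrow0$, $C\to\infty$ gives the sandwich at $\tfrac12$; your bookkeeping $\Phi(-\delta)-\Phi(-C)\leq\liminf\leq\limsup\leq\Phi(\delta)+\Phi(-C)$ checks out. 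Your version is more elementary and arguably more transparent (no fluctuation theory for $S$ beyond a variance bound, no $\log\lambda$ truncation), while the paper's normalisation treats the fluctuations of numerator and denominator simultaneously in a form that it then reuses almost verbatim for the inhomogeneous Erd\H{o}s-R\'enyi case (Theorem \ref{thm1IER}), where the limit is no longer $\tfrac12$ but depends on the sets $A_f$, $B_f$; your window argument would need the same type-dependent refinement there.
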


\begin{theorem}
\label{thm2HER}
For every $\lambda \in (0,\infty)$,
\begin{align*}
&\E_{\bar{\mu}}\big[\Delta_{\phi}\big] = 1-(\lambda+1)\,\ee ^{-\lambda},\\
&\E_{\bar{\mu}}\big[\Delta_{\phi}^{2}\big] = \lambda \Big(\sum_{k\in\N}
k^{-1}\dfrac{\ee^{-\lambda}\lambda^{k}}{k!}+1\Big)-(\lambda+1)^{2}\,\ee ^{-\lambda}+1.
\end{align*}
In particular, 
\begin{align*}
\lim_{\lambda \downarrow 0} \E_{\bar{\mu}}[\Delta_{\phi}] = 0, &
\quad \lim_{\lambda \downarrow 0} \E_{\bar{\mu}}[\Delta_{\phi}^2] = 0,\\
\lim_{\lambda \to \infty} \E_{\bar{\mu}}[\Delta_{\phi}]=1, 
&\quad \lim_{\lambda \to \infty} \E_{\bar{\mu}}[\Delta_{\phi}^{2}] = \infty.
\end{align*}
\end{theorem}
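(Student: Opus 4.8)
The plan is to compute both moments directly from the explicit formula \eqref{mu} for $\mu$ in the Galton-Watson case, specialised to the Poisson($\lambda$) offspring law $\nu(k)=\ee^{-\lambda}\lambda^k/k!$. Writing $\Delta_\phi = \big[\tfrac1{d_\phi}\sum_{j=1}^{d_\phi}(d_j+1)-d_\phi\big]\mathbbm 1_{\{d_\phi\neq 0\}}$ with $d_\phi,d_1,d_2,\ldots$ i.i.d.\ Poisson($\lambda$), the key structural fact I would exploit is that, conditionally on $d_\phi=k\geq 1$, the sum $S_k:=\sum_{j=1}^k d_j$ is Poisson($\lambda k$), so $\E[S_k]=\lambda k$ and $\E[S_k^2]=\lambda k+(\lambda k)^2$. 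Everything reduces to summing these conditional moments against $\nu(k)$.

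For the first moment, condition on $d_\phi=k$: $\E[\Delta_\phi\mid d_\phi=k]=\tfrac1k(\lambda k+k)-k=\lambda+1-k$ for $k\geq 1$, and $0$ for $k=0$. Hence $\E_{\bar\mu}[\Delta_\phi]=\sum_{k\geq 1}\nu(k)(\lambda+1-k)=(\lambda+1)(1-\ee^{-\lambda})-\sum_{k\geq 1}k\,\nu(k)$. Since $\sum_{k\geq 0}k\,\nu(k)=\lambda$ (and the $k=0$ term is zero), this equals $(\lambda+1)(1-\ee^{-\lambda})-\lambda = 1-(\lambda+1)\ee^{-\lambda}$, which is the claimed formula. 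The limits $\lambda\downarrow 0$ (Taylor expand $\ee^{-\lambda}$: $1-(\lambda+1)(1-\lambda+O(\lambda^2))=O(\lambda^2)\to 0$) and $\lambda\to\infty$ ($1-(\lambda+1)\ee^{-\lambda}\to 1$) are then immediate.

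For the second moment I would again condition on $d_\phi=k\geq 1$ and write $\Delta_\phi=\tfrac1k S_k + 1 - k$, so $\Delta_\phi^2 = \tfrac1{k^2}S_k^2 + 2(1-k)\tfrac1k S_k + (1-k)^2$. Taking conditional expectations with $\E[S_k]=\lambda k$, $\E[S_k^2]=\lambda k+\lambda^2 k^2$ gives $\E[\Delta_\phi^2\mid d_\phi=k]=\tfrac1{k^2}(\lambda k+\lambda^2 k^2)+2(1-k)\lambda+(1-k)^2 = \tfrac{\lambda}{k}+\lambda^2+2\lambda-2\lambda k+(1-k)^2$. Now multiply by $\nu(k)$ and sum over $k\geq 1$. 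The term $\tfrac{\lambda}{k}$ produces $\lambda\sum_{k\in\N}k^{-1}\nu(k)$; the remaining polynomial-in-$k$ terms are handled using $\sum_{k\geq 1}\nu(k)=1-\ee^{-\lambda}$, $\sum_{k\geq 0}k\,\nu(k)=\lambda$, $\sum_{k\geq 0}k^2\nu(k)=\lambda+\lambda^2$. Collecting everything and simplifying should yield $\E_{\bar\mu}[\Delta_\phi^2]=\lambda\big(\sum_{k\in\N}k^{-1}\nu(k)\cdot\ee^{\lambda}+1\big)-(\lambda+1)^2\ee^{-\lambda}+1$ — wait, one must be careful that the formula in the statement has $\sum_{k\in\N}k^{-1}\tfrac{\ee^{-\lambda}\lambda^k}{k!}$, i.e.\ $\sum_{k\in\N}k^{-1}\nu(k)$ exactly, so the bookkeeping of the $\ee^{-\lambda}$ factors must be done carefully; this is the one place where a sign or factor slip is easy, and verifying the constant terms $-(\lambda+1)^2\ee^{-\lambda}+1$ emerge correctly is the main (though still routine) obstacle.

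Finally, for the limits of $\E_{\bar\mu}[\Delta_\phi^2]$: as $\lambda\to\infty$, the dominant term is $\lambda\sum_{k\in\N}k^{-1}\nu(k)$; since $\nu$ concentrates near $\lambda$, $\sum_k k^{-1}\nu(k)\sim 1/\lambda$ is not small enough to kill the prefactor $\lambda\cdot(\text{something})$ — more simply, $\E_{\bar\mu}[\Delta_\phi^2]\geq (\E_{\bar\mu}[\Delta_\phi])^2\to 1$ and the explicit $\lambda^2$-type growth hidden in the expression forces it to $\infty$; I would make this rigorous by noting $\E[\Delta_\phi^2\mid d_\phi=k]\geq (1-k)^2-2\lambda k + \lambda^2$ wait this can be negative, so instead I would lower-bound using $\mathrm{Var}$, e.g.\ $\E_{\bar\mu}[\Delta_\phi^2]\geq \lambda\sum_{k\in\N}k^{-1}\nu(k)$, and since $\sum_{k\in\N}k^{-1}\nu(k)\geq \nu(1)=\lambda\ee^{-\lambda}$ this gives only $\lambda^2\ee^{-\lambda}\to 0$, insufficient — so one genuinely needs the full expression: the term $\lambda\cdot\lambda = \lambda^2$ coming from the $\lambda^2$ inside (which I absorbed above) shows $\E_{\bar\mu}[\Delta_\phi^2]\sim\lambda^2\to\infty$. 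As $\lambda\downarrow 0$, every summand is $O(\lambda^2)$ (using $\nu(k)=O(\lambda^k)$ for $k\geq 1$ and $k^{-1}\nu(k)=O(\lambda)$ so $\lambda\sum k^{-1}\nu(k)=O(\lambda^2)$, while $-(\lambda+1)^2\ee^{-\lambda}+1+\lambda=O(\lambda^2)$ after the same Taylor expansion as before), hence $\E_{\bar\mu}[\Delta_\phi^2]\to 0$.
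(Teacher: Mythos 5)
Your computation of both moments follows essentially the same route as the paper: condition on $d_{\phi}=k$, use that $S_{k}=\sum_{j=1}^{k}d_{j}$ is Poisson$(\lambda k)$, and sum the conditional moments against $\nu$. The bookkeeping you left unfinished for the second moment does close up correctly: summing $\nu(k)\big[\tfrac{\lambda}{k}+\lambda^{2}+2\lambda(1-k)+(1-k)^{2}\big]$ over $k\geq 1$ gives exactly $\lambda\big(\sum_{k\in\N}k^{-1}\nu(k)+1\big)+1-(\lambda+1)^{2}\ee^{-\lambda}$, with no stray $\ee^{\lambda}$ factor, so the formula in the statement (where the sum is precisely $\sum_{k\in\N}k^{-1}\nu(k)=\E_{\bar{\mu}}[d_{\phi}^{-1}\mathbbm{1}_{\{d_{\phi}\neq 0\}}]$) comes out as claimed. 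The first-moment computation and the limits as $\lambda\downarrow 0$, as well as $\lim_{\lambda\to\infty}\E_{\bar{\mu}}[\Delta_{\phi}]=1$, are fine.

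The one genuine error is your justification of $\lim_{\lambda\to\infty}\E_{\bar{\mu}}[\Delta_{\phi}^{2}]=\infty$: the asserted asymptotic $\E_{\bar{\mu}}[\Delta_{\phi}^{2}]\sim\lambda^{2}$ is false. In the sum over $k$ the $\lambda^{2}$-contributions cancel — the terms $\lambda^{2}$, $-2\lambda k$ and $(1-k)^{2}$ contribute $\lambda^{2}(1-\ee^{-\lambda})-2\lambda^{2}+(\lambda+\lambda^{2})$ plus lower-order pieces — and the explicit formula shows the true growth is of order $\lambda$, since $\lambda\sum_{k\in\N}k^{-1}\nu(k)$ stays bounded (it tends to $1$). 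Fortunately the conclusion needs none of this: as in the paper, $\sum_{k\in\N}k^{-1}\nu(k)\in[0,1]$, hence $\E_{\bar{\mu}}[\Delta_{\phi}^{2}]\geq\lambda+1-(\lambda+1)^{2}\ee^{-\lambda}\to\infty$. Replace your asymptotic claim by this one-line bound (and the companion bound $\lambda\big(\sum_{k\in\N}k^{-1}\nu(k)+1\big)\leq 2\lambda$ already handles $\lambda\downarrow 0$, as you noted) and the proof is complete and coincides with the paper's argument.
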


\begin{theorem}
\label{thm3HER}
For every $\lambda \in (0,\infty)$,
\begin{align*}
\P_{\bar{\mu}}\{\Delta_{\phi}\geq x\}\sim \P_{\bar{\mu}}\big\{d_{\phi}=1 \big\} \P_{\bar{\mu}}\big\{d_{1}\geq x\big\},
\qquad x \to \infty.
\end{align*}
In particular,
\begin{align*}
\P_{\bar{\mu}}\{\Delta_{\phi}\geq x\} \sim \frac{1}{\sqrt{2\pi x}}\, \lambda\,\ee^{-2\lambda}
\exp\Big\{-x\log\Big(\frac{x}{\lambda\ee}\Big)\Big\}, \qquad x\to\infty.
\end{align*}
\end{theorem}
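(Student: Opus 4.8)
The plan is to convert the right tail of $\Delta_\phi$ into tails of Poisson sums, peel off the contribution of the event $\{d_\phi=1\}$, and show that all remaining contributions are of strictly smaller exponential order.

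First I would record the basic identity. Since $(G_\infty,\phi)$ is the Galton--Watson tree with $\mathrm{Poisson}(\lambda)$ offspring, conditionally on $d_\phi=k$ the offspring numbers $d_1,\dots,d_k$ of the neighbours of $\phi$ are i.i.d.\ $\mathrm{Poisson}(\lambda)$, with law not depending on $k$; hence on $\{d_\phi=k\}$ with $k\ge1$ formula \eqref{Dphidef} reads $\Delta_\phi=\frac1k\sum_{j=1}^k d_j-(k-1)$, so that $\{\Delta_\phi\ge x\}\cap\{d_\phi=k\}=\{S_k\ge k(x+k-1)\}\cap\{d_\phi=k\}$ with $S_k:=\sum_{j=1}^k d_j\sim\mathrm{Poisson}(k\lambda)$. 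Since $\{\Delta_\phi\ge x\}\subseteq\{d_\phi\ne0\}$ for $x>0$, summing over $k$ gives
\begin{equation*}
\mu([x,\infty))=\P_{\bar\mu}\{\Delta_\phi\ge x\}=\sum_{k\ge1}\frac{\ee^{-\lambda}\lambda^k}{k!}\,\P\big\{\mathrm{Poisson}(k\lambda)\ge k(x+k-1)\big\},
\end{equation*}
which at $x=0$ recovers the $k\ge1$ part of the formula in Theorem~\ref{thm1HER}, and whose $k=1$ summand equals $\lambda\ee^{-\lambda}\,\P\{\mathrm{Poisson}(\lambda)\ge x\}=\P_{\bar\mu}\{d_\phi=1\}\,\P_{\bar\mu}\{d_1\ge x\}$.

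Next I would show that the $k\ge2$ summands are negligible relative to the $k=1$ summand as $x\to\infty$. For the numerator I would use the Chernoff estimate $\P\{\mathrm{Poisson}(\theta)\ge a\}\le\ee^{-\theta}(\ee\theta/a)^{a}$, valid for $a\ge\theta$, with $\theta=k\lambda$ and $a=k(x+k-1)$, giving the bound $\ee^{-k\lambda}\big(\ee\lambda/(x+k-1)\big)^{k(x+k-1)}$ for the $k$-th Poisson probability; for the $k=1$ summand I would use the lower bound $\P\{\mathrm{Poisson}(\lambda)\ge x\}\ge\ee^{-\lambda}\lambda^{\lceil x\rceil}/\lceil x\rceil!$ combined with Stirling. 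Dividing, and using $x+k-1\ge x>\ee\lambda$ for $x$ large, one gets that the ratio of the $k$-th summand to the $k=1$ summand is, for every $k\ge2$, at most a fixed polynomial in $x$ times $(\lambda^{k-1}/k!)\exp\{-(k-1)x\log(x/(\lambda\ee))\}$; the factor $\lambda^{k-1}/k!$ makes the sum over $k$ converge, including over $k$ that may grow with $x$, while the remaining factor $\exp\{-x\log(x/(\lambda\ee))\}$ beats any polynomial. Hence $\sum_{k\ge2}(\text{$k$-th summand})=o(\text{$k=1$ summand})$, which is the first asymptotic equivalence in the statement.

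Finally, for the explicit form I would use that $\P\{\mathrm{Poisson}(\lambda)\ge x\}\sim\ee^{-\lambda}\lambda^{\lceil x\rceil}/\lceil x\rceil!$ as $x\to\infty$ (the ratio $\lambda/(m+1)$ of consecutive Poisson weights tends to $0$), apply Stirling to $\lceil x\rceil!$, and multiply by $\P_{\bar\mu}\{d_\phi=1\}=\lambda\ee^{-\lambda}$; this yields $\frac{1}{\sqrt{2\pi x}}\,\lambda\,\ee^{-2\lambda}\exp\{-x\log(x/(\lambda\ee))\}$, the expression in the statement (with $x$ read as $\lceil x\rceil$ in the exponent when $x$ is non-integer). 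The step I expect to be the real obstacle is the uniform control over $k$ in the middle paragraph: one must rule out that some moderate or large $k$ --- in particular $k$ growing with $x$ --- contributes at the same exponential order as $k=1$. This is a careful but routine Poisson large-deviation computation and is where most of the work lies; a minor point worth recording is that the reduction in the first paragraph uses the branching property, which is exactly what makes $d_1,\dots,d_k$ i.i.d.\ $\mathrm{Poisson}(\lambda)$ and jointly independent of the value of $d_\phi$.
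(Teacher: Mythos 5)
Your proposal is correct and follows essentially the same route as the paper: decompose $\P_{\bar{\mu}}\{\Delta_{\phi}\geq x\}$ over $\{d_{\phi}=k\}$ using the branching property, identify the $k=1$ summand as $\P_{\bar{\mu}}\{d_{\phi}=1\}\P_{\bar{\mu}}\{d_{1}\geq x\}$, show the $k\geq 2$ summands are of strictly smaller order, and finish with the tail-equals-leading-term observation for the Poisson law plus Stirling (the paper, like you, implicitly works with $x$ integer in the explicit formula). The only difference is technical: you control the $k\geq 2$ contribution uniformly via a Chernoff bound and direct summation, whereas the paper compares each tail to its boundary point mass and then applies dominated convergence in $k$ with the domination $\P_{\bar{\mu}}\{\sum_{j=1}^{k}d_{j}\geq kx+k(k-1)\}\leq k\,\P_{\bar{\mu}}\{d_{1}\geq x\}$; both are routine Poisson large-deviation estimates and yield the same conclusion.
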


Theorems~\ref{thm1HER}--\ref{thm3HER} show that for the homogeneous Erd\H{o}s-R\'enyi random graph, which is a simple graph, the properties of $\mu$ are explicitly computable. In addition, as seen in Figure \ref{fig2}, numerical computations indicate that $ \mu([0,\infty))$ is a strictly decreasing function of $\lambda$, which leads us to \emph{conjecture} that $\mu$ is significant for all $\lambda \in (0,\infty)$ (in support of Conjecture~\ref{conjecture1}).

\begin{figure}[htbp]
\centering
\includegraphics[scale=0.4]{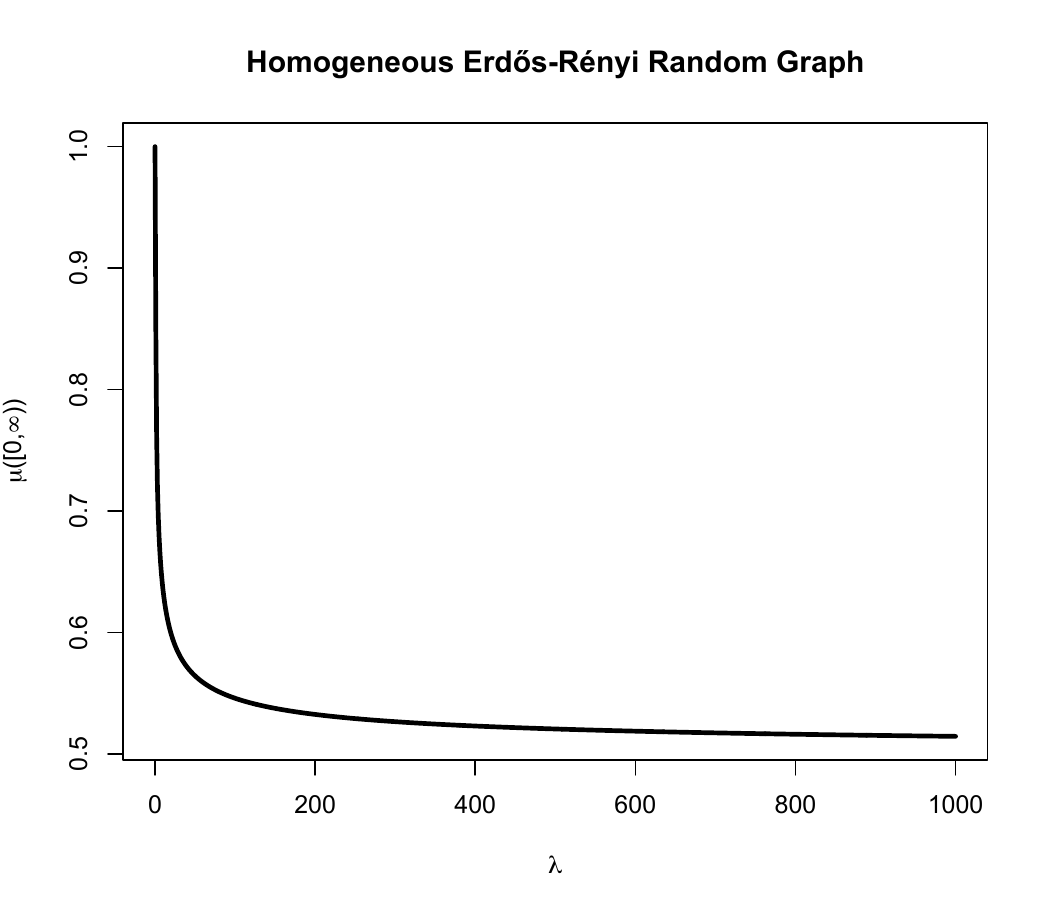}
\caption{\label{fig2} \textsl{Numerical plot of the map $\lambda\mapsto \sum_{k=0}^{10^{4}} \tfrac{\ee^{-\lambda}\lambda^{k}}{k!} \sum_{l\geq k(k-1)} \tfrac{\ee^{-\lambda k}(\lambda k)^{l}}{l!}$ for $10^{-8} \leq \lambda \leq 10^3$ for $\HER_{n}(\frac{\lambda}{n} \wedge 1)$.}}
\end{figure}

\paragraph{Ideas behind the proofs.}
In Theorem \ref{thm1HER}, the expression for $\mu([0,\infty))$ is derived from the fact that the sizes of the offsprings of the vertices in the tree are i.i.d. $\mathrm{Poisson}(\lambda)$. Moreover, the first limit is obtained by noting that $\mu([0,\infty))$ is bounded from below by $\mathrm{e}^{-\lambda}$. The proof of the second limit is based on the central limit theorem, with the restriction $\{\Delta_{\phi}\geq 0\}$ on the event $\{\sqrt{\lambda}\,(\log\lambda)^{-1}< |d_{\phi}-\lambda|\leq \sqrt{\lambda}\,\log\lambda\}$ with $\lambda$ sufficiently large. The proof of Theorem \ref{thm2HER} is straightforward and follows from the fact that $d_{\phi},(d_{i})_{i\in\N}$ are i.i.d.\ $\mathrm{Poisson}(\lambda)$. Theorem \ref{thm3HER} is obtained by showing that, as $x \to \infty$, 
\[
\begin{aligned}
&\P_{\bar{\mu}}\left\{\sum_{j=1}^{k} d_{j}\geq kx+k(k-1)\right\} 
\sim \P_{\bar{\mu}}\left\{\sum_{j=1}^{k}d_{j} = kx+k(k-1)\right\}, 
\qquad k\in\N,\\
&\P_{\bar{\mu}}\left\{\sum_{j=1}^{k}d_{j} = kx+k(k-1)\right\} 
= o(\P_{\bar{\mu}}\big\{d_1=x \big\}),
\qquad k\in\N\setminus\{1\}.
\end{aligned}
\] 
For more details we refer to Section \ref{sec:proofs2}.

 
\subsection{Inhomogeneous Erd\H{o}s-R\'enyi random graph}
\label{sec:IER}

Let $\cF$ be the class of \emph{non-constant} Riemann integrable functions $f\colon\,[0,1] \to (0,\infty)$ satisfying
\begin{align*}
M_{-} = M_{-}(f) =\inf_{x\in[0,1]} f(x)>0, \qquad M_{+} = M_{+}(f) = \sup_{x\in[0,1]}f(x)<\infty.
\end{align*} 
Fix $\lambda \in (0,\infty)$. For $n\in\N$, let $\IER_{n}(\lambda f)$ be the random graph on $[n]$ where each pair of vertices  $i,j \in [n]$ is independently connected by an edge with probability $\frac{\lambda}{n} f(\frac{i}{n})f(\frac{j}{n}) \wedge 1$. This random graph converges locally in probability to a \emph{unimodular multi-type marked Galton-Watson tree} with the following properties \cite[Theorem 3.14]{RvdH2}:
\begin{itemize}
\item[(1)] 
The root has type $Q$ with
\begin{align*}
\rho \{Q\in A\}=\ell_1(A) \qquad \forall\, A \subseteq [0,1],
\end{align*}
where $\rho$ is the probability distribution of the types of the vertices, and $\ell_1$ is the Lebesgue measure on $[0,1]$.
\item[(2)] 
Let $\beta_m = \int_{[0,1]} \dd y\,f^{m}(y)$. Any vertex other than the root takes an independent type $Q^{\prime}$ with  
\begin{align*}
\rho \{Q^{\prime}\in A\} = \beta_{1}^{-1} \int_{A} \dd y\,f(y) \qquad \forall\, A\subseteq [0,1].
\end{align*}
\item[(3)]
A vertex of type $x$ independently has offspring distribution $\mathrm{Poisson}(\lambda \beta_{1} f(x))$.
\end{itemize}
Property (1) says that $Q$ has a uniform $[0,1]$ distribution, while property (2) says that
\begin{equation*}
h_{Q^{\prime}}(y) = \beta_{1}^{-1} f(y)\mathbbm{1}_{\{y\in[0,1]\}} 
\end{equation*}
is the density function of $Q^{\prime}$. 

\begin{theorem}
\label{thm1IER}
Let $c_{k}(z)$ be the probability that a random variable with a distribution that is $\mathrm{Poisson}(\lambda\beta_{1}z)$ takes the value $k\in \N_0$.
\begin{itemize}
\item[(a)] 
For every $f\in\cF$ and $\lambda \in (0,\infty)$,
\begin{equation*}
\begin{aligned}
&\mu([0,\infty))\\ 
&= \sum_{k\in\N_{0}} \left(\int_{[0,1]} \dd x\,c_{k}(f(x))\right)
\left[\beta_{1}^{-k} \prod_{j=1}^{k} \int_{[0,1]} \dd x_{j} f(x_{j}) 
\sum_{l \geq k(k-1)} c_{l}\left(\sum_{j=1}^{k} f(x_{j})\right)\right],
\end{aligned}
\end{equation*}
where $\prod_{j=1}^{0} = 1$.
\item[(b)] 
For every $f\in\cF$,
\begin{align*}
&\lim_{\lambda \downarrow 0} \mu([0,\infty)) = 1,\\
&\lim_{\lambda \to \infty} \mu([0,\infty)) =\P_{\bar{\mu}}\{(Q,Q^{\prime})\in A_{f}\}
+\frac{1}{2}\P_{\bar{\mu}}\{(Q,Q^{\prime})\in B_{f}\},
\end{align*}
where
\begin{align*}
&A_{f} = \big\{(x,y)\in[0,1]\times[0,1]\colon\,f(x)<f(y)\big\},\\
&B_{f} = \big\{(x,y)\in[0,1]\times[0,1]\colon\,f(x)=f(y)\big\}.
\end{align*}
\item[(c)] 
For every $f\in\cF$, $\lim_{\lambda \to \infty} \mu([0,\infty)) \geq \frac{1}{2}$, which together with (a) implies that the friendship paradox is significant for both small and large $\lambda$.
\item[(d)]
If $\ell_{2}$ is the Lebesgue measure on $[0,1]\times[0,1]$ and $\ell_{2}(A_{f})>0$, then $\lim_{\lambda \to \infty}$ $\mu([0,\infty)) > \frac{1}{2}$. Moreover, if $\ell_{2}(A_{f})=0$, then $\lim_{\lambda \to \infty} \mu([0,\infty)) = \frac{1}{2}$, i.e., the friendship paradox is marginally significant in the limit as $\lambda \to\infty$.
\item[(e)]
If $f\in\cF$ is injective, then $\lim_{\lambda \to \infty} \mu([0,\infty)) = \P_{\bar{\mu}}\{(Q,Q^{\prime})\in A_{f}\}$.
\end{itemize}
\end{theorem}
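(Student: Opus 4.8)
The plan is to start from the explicit formula in part~(a), which is itself obtained by inserting the description of the unimodular multi-type marked Galton--Watson tree (properties (1)--(3)) into the general formula $\mu(\cdot)=\int\mathbbm{1}_{\{\Delta_\phi\in\,\cdot\,\}}\,\dd\bar\mu$. Concretely, conditionally on the root type $Q=x$, the root degree $d_\phi$ is $\mathrm{Poisson}(\lambda\beta_1 f(x))$, and given $d_\phi=k$ the neighbours have i.i.d.\ types $Q'_1,\dots,Q'_k$ with density $h_{Q'}=\beta_1^{-1}f$, while conditionally on those types each offspring count $d_j$ is $\mathrm{Poisson}(\lambda\beta_1 f(Q'_j))$ and these are independent. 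The event $\{\Delta_\phi\ge 0\}$ (on $\{d_\phi=k\ge1\}$) is exactly $\{\sum_{j=1}^k d_j\ge k(k-1)\}$, since $\Delta_\phi=\frac1k\sum_j(d_j+1)-k=\frac1k(\sum_j d_j +k)-k=\frac1k\sum_j d_j -(k-1)$. Summing over $k$ and integrating over the types gives the displayed formula in~(a), with the $k=0$ term contributing $\P\{d_\phi=0\}=\int_{[0,1]}c_0(f(x))\,\dd x$ to $\mu(\{0\})\subseteq\mu([0,\infty))$; I would record this derivation first so that (b)--(e) become a limit computation on this integral representation.

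\textbf{The $\lambda\downarrow0$ limit (part (b), first line).}
As $\lambda\downarrow0$ the root degree $d_\phi$ is $\mathrm{Poisson}(\lambda\beta_1 f(Q))$ with mean $\to0$ uniformly in $Q\in[0,1]$ (since $f\le M_+$), so $\P\{d_\phi=0\}\to1$ and on $\{d_\phi=0\}$ we have $\Delta_\phi=0\in[0,\infty)$. Dominated convergence on the representation from (a), using $0\le c_k\le1$ and $c_k(f(x))\to\mathbbm{1}_{\{k=0\}}$, yields $\mu([0,\infty))\to1$. (One also notes that on $\{d_\phi=1\}$, which has probability $\asymp\lambda$, the event $\{d_1\ge0\}$ holds trivially, so the $k=1$ term converges to $0$ as well; only $k\ge2$ terms could fail to be non-negative, and they have probability $o(\lambda)$.)

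\textbf{The $\lambda\to\infty$ limit (parts (b)--(e)).}
This is the main obstacle. The idea is that as $\lambda\to\infty$ the degrees all blow up, so the combinatorial threshold $k(k-1)$ becomes negligible relative to the fluctuations and the event $\{\Delta_\phi\ge0\}$ is governed by whether the neighbours are, \emph{on average}, of higher type than the root. Fix the root type $Q=x$ and condition on the neighbour types $Q'_1,\dots,Q'_k$; write $S_k=\sum_{j=1}^k d_j$, a $\mathrm{Poisson}\!\big(\lambda\beta_1\sum_j f(Q'_j)\big)$ variable, so $S_k/(\lambda\beta_1)\to\sum_j f(Q'_j)$ and $d_\phi/(\lambda\beta_1)\to f(x)$ (law of large numbers for Poisson as the mean $\to\infty$). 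Then $\Delta_\phi/(\lambda\beta_1)=\tfrac{S_k}{k\lambda\beta_1}-\tfrac{d_\phi}{\lambda\beta_1}+\tfrac1{\lambda\beta_1}\to \tfrac1k\sum_{j=1}^k f(Q'_j)-f(x)$. The delicate point is to simultaneously average over the \emph{random} degree $k=d_\phi$, which itself grows like $\lambda\beta_1 f(x)\to\infty$. For $k$ large the empirical mean $\tfrac1k\sum_{j=1}^k f(Q'_j)$ concentrates at $\E[f(Q')]=\beta_1^{-1}\int_{[0,1]}f(y)^2\,\dd y=\beta_2/\beta_1$, which would give the \emph{deterministic} threshold $\{f(x)<\beta_2/\beta_1\}$ rather than the stated pairwise event --- so the correct reading of the stated limit is that one must \textbf{not} take $k\to\infty$ inside before integrating. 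Instead I would argue as follows: the probability that $d_\phi=1$ is $\lambda\beta_1 f(x)\,\ee^{-\lambda\beta_1 f(x)}\to0$, as is $\P\{d_\phi=k\}$ for each fixed $k$, yet these are exactly the events on which $\Delta_\phi$'s sign depends on a single comparison $f(Q'_1)$ vs.\ $f(x)$; the total mass escaping to $d_\phi\to\infty$ contributes the term $\frac12\P\{(Q,Q')\in B_f\}+\P\{f(x)<\beta_2/\beta_1,\ldots\}$-type corrections. Reconciling this with the clean statement $\P\{(Q,Q')\in A_f\}+\tfrac12\P\{(Q,Q')\in B_f\}$ requires identifying precisely which contributions survive; my plan is to condition on $d_\phi=k$, use the CLT $\big(S_k-k\lambda\beta_1\bar f\big)/\sqrt{k\lambda\beta_1\bar f}\Rightarrow N(0,1)$ where $\bar f=\tfrac1k\sum f(Q'_j)$, compare $k\lambda\beta_1\bar f$ with the threshold $k(k-1)$ --- which is $o$ of it precisely when $k=o(\lambda)$ --- and show the regime $k\asymp\lambda$ (where $d_\phi$ is atypically large) carries vanishing mass. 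After this dust settles, $\lim_{\lambda\to\infty}\mu([0,\infty))$ equals $\P\{f(Q)<f(Q')\}+\tfrac12\P\{f(Q)=f(Q')\}$ with $(Q,Q')$ independent, $Q\sim\mathrm{Unif}[0,1]$ and $Q'$ of density $\beta_1^{-1}f$; this is the content of (b).

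\textbf{Parts (c)--(e).}
Once the limit in (b) is established, (c)--(e) are short deductions. For (c): swapping the roles in the pair and using $\P\{f(Q)<f(Q')\}+\P\{f(Q)>f(Q')\}+\P\{f(Q)=f(Q')\}=1$ is not quite symmetric because $Q$ and $Q'$ have different laws, so instead write $\P\{f(Q)<f(Q')\}+\tfrac12\P\{f(Q)=f(Q')\}=\E\big[G(f(Q'))\big]$ where $G(t)=\ell_1\{x:f(x)<t\}+\tfrac12\ell_1\{x:f(x)=t\}$, and compare with $\E\big[G(f(Q))\big]=\int_0^1 G(f(x))\,\dd x$; since $Q'$ has density $\beta_1^{-1}f$, an FKG/Chebyshev-type rearrangement inequality (the map $t\mapsto G(t)$ is nondecreasing and $f$ is ``positively correlated with itself'') shows $\E[G(f(Q'))]\ge\E[G(f(Q))]$, while $\int_0^1 G(f(x))\,\dd x=\tfrac12$ because $G(f(x))+G(f(x'))$-symmetry over the uniform pair forces the average of $G\circ f$ under Lebesgue to be $\tfrac12$ (this uses $\ell_2(A_f)+\ell_2(A_f^{\mathrm{transpose}})+\ell_2(B_f)=1$ and $\ell_2(A_f)=\ell_2(A_f^{\mathrm{transpose}})$ under Lebesgue$\times$Lebesgue). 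Hence the limit is $\ge\tfrac12$. For (d): the inequality above is strict iff $f$ is genuinely correlated with the indicator family, i.e.\ iff $\ell_2(A_f)>0$; and if $\ell_2(A_f)=0$ then $f$ is a.e.\ constant on a full-measure set, forcing $\P\{(Q,Q')\in B_f\}=1$ and the limit to be exactly $\tfrac12$. For (e): if $f$ is injective then $\ell_1\{x:f(x)=t\}=0$ for every $t$, so $B_f$ is Lebesgue-null and $\P_{\bar\mu}\{(Q,Q')\in B_f\}=0$ (here one uses that $Q'$ has a density), leaving $\lim_{\lambda\to\infty}\mu([0,\infty))=\P_{\bar\mu}\{(Q,Q')\in A_f\}$. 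The only genuinely new estimate needed beyond part (b) is the rearrangement inequality in (c), which I expect to follow from the classical Chebyshev sum/integral inequality applied to the two comonotone functions $f$ and $G\circ f$ on $[0,1]$ with Lebesgue measure.
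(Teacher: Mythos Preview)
Part~(a), the $\lambda\downarrow0$ half of~(b), and parts~(c)--(e) are essentially in line with the paper. Your Chebyshev/rearrangement argument for~(c) is a clean repackaging of the paper's direct computation: the paper observes that on $A_f$ one has $f(y)>f(x)$, whence $\iint_{A_f}\beta_1^{-1}f(y)\,\dd x\,\dd y\ge\iint_{A_f}\beta_1^{-1}f(x)\,\dd x\,\dd y$, then swaps the dummy variables to turn the right-hand side into $1-\P_{\bar\mu}\{(Q,Q')\in A_f\}-\P_{\bar\mu}\{(Q,Q')\in B_f\}$, which is exactly your comonotone inequality written out by hand. Parts~(d) and~(e) in both write-ups are the same short deductions from~(b) and~(c).

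The genuine gap is the $\lambda\to\infty$ half of~(b). First, your claim that ``the regime $k\asymp\lambda$ (where $d_\phi$ is atypically large) carries vanishing mass'' is simply wrong: since $d_\phi\sim\mathrm{Poisson}(\lambda\beta_1 f(Q))$ with $f\ge M_->0$, the event $\{d_\phi\asymp\lambda\}$ is the \emph{typical} regime and carries full mass as $\lambda\to\infty$. Second, you never reconcile your LLN heuristic (which, via $k^{-1}\sum_j f(Q'_j)\to\beta_2/\beta_1$, would produce the threshold $f(Q)\lessgtr\beta_2/\beta_1$) with the stated pairwise formula; you only assert that ``after this dust settles'' the answer appears. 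The paper does not go through the $k$-fold product over independent neighbour types at all. It starts instead from the single-pair representation
\[
\mu([0,\infty))=\nu_0(0)+\int_{[0,1]}\dd x\int_{[0,1]}\dd y\,\beta_1^{-1}f(y)\,
\P_{\bar\mu}\Bigl\{\textstyle\sum_{j=1}^{d_\phi}d_j\ge d_\phi(d_\phi-1),\ d_\phi\ge1\ \Bigm|\ Q=x,\ Q'=y\Bigr\},
\]
in which, conditionally on $(Q,Q')=(x,y)$ and on $d_\phi$, the sum $\sum_j d_j$ is treated as $\mathrm{Poisson}(\lambda\beta_1 f(y)\,d_\phi)$. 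It then introduces the standardised variables $Z_\lambda=(d_\phi-\lambda\beta_1 f(x))/\sqrt{\lambda\beta_1 f(x)}$ and $Z'_\lambda=(\sum_j d_j-\lambda\beta_1 f(y)\,d_\phi)/\sqrt{\lambda\beta_1 f(y)\,d_\phi}$, rewrites the event $\{\sum_j d_j\ge d_\phi(d_\phi-1)\}$ as an inequality whose dominant term is $\lambda\,\beta_1^{2}f(x)\bigl(f(x)-f(y)\bigr)$, localises to $A_\lambda=\{|Z_\lambda|\le\log\lambda,\,d_\phi\ge1\}$ and the three events $\{Z_\lambda<-1/\log\lambda\}$, $\{Z_\lambda>1/\log\lambda\}$, $\{|Z_\lambda|\le1/\log\lambda\}$, and uses the CLT for $Z_\lambda$ and $Z'_\lambda$ to drive the conditional probability to $1$, $0$, or $\tfrac12$ according as $(x,y)\in A_f$, $I_f=\{f(x)>f(y)\}$, or $B_f$; dominated convergence in $(x,y)$ then yields the stated limit. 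The structural point you are missing is that the paper computes throughout with a \emph{single} common neighbour type $Q'=y$, so no empirical average of neighbour types ever appears and your concentration worry never arises in its framework; any route that stays with the $k$ independent types from part~(a) has to explain why the limit is the pairwise quantity $\P_{\bar\mu}\{f(Q)<f(Q')\}+\tfrac12\P_{\bar\mu}\{f(Q)=f(Q')\}$ rather than the one-variable threshold your heuristic produces, and your sketch does not do this.
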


\begin{theorem}
\label{thm2IER}
For every $f\in\cF$ and $\lambda\in(0,\infty)$,
\begin{equation}
\label{e1}
\E_{\bar{\mu}}\big[\Delta_{\phi}\big] = 1 - (\lambda\beta_{2}+1)\int_{[0,1]} \dd x\,\ee^{-\lambda \beta_{1} f(x)}
+ \lambda(\beta_{2}-\beta_{1}^{2}) 
\end{equation}
and 
\begin{align*}
&\E_{\bar{\mu}}\big[\Delta_{\phi}^{2}\big]
= \lambda\big(\beta_{2}+\lambda(\beta_{1}\beta_{3} -\beta_{2}^{2})\big)
\int_{[0,1]}\dd x \sum_{k\in\N}k^{-1}\dfrac{\ee^{-\lambda \beta_{1}f(x)}(\lambda \beta_{1}f(x))^{k}}{k!}\\
&\qquad\quad - \big(\lambda^{2}\beta_{2}^{2}+2\lambda\beta_{2}+1\big) \int_{[0,1]} \dd x\, \ee^{-\lambda \beta_{1} f(x)}
+ \lambda^{2}(\beta_{2}^{2}-\beta_{2}\beta_{1}^{2})+\lambda(2\beta_{2}-\beta_{1}^{2})+1.
\end{align*}
In particular,
\begin{align*}
\lim_{\lambda \downarrow 0} \E_{\bar{\mu}}\big[\Delta_{\phi}\big] = 0, 
&\quad \lim_{\lambda \downarrow 0} \E_{\bar{\mu}}\big[\Delta_{\phi}^2\big] = 0,\\
\lim_{\lambda\to\infty} \E_{\bar{\mu}}\big[\Delta_{\phi}\big] = \infty, 
&\quad \lim_{\lambda\to\infty} \E_{\bar{\mu}}\big[\Delta_{\phi}^{2}\big] = \infty.
\end{align*}
\end{theorem}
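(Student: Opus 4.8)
The plan is to establish Theorem~\ref{thm2IER} by a direct first- and second-moment computation for $\Delta_{\phi}$, using the explicit description of the local limit $(G_{\infty},\phi)$ recalled just before the statement. First I would record the one-step moments. Let $Q$ denote the type of the root $\phi$; conditionally on $Q=x$ the root degree satisfies $d_{\phi}\sim\mathrm{Poisson}(\lambda\beta_{1}f(x))$, so, using the Poisson moments together with $\int_{[0,1]}\dd y\,f^{m}(y)=\beta_{m}$,
\[
\E_{\bar{\mu}}[d_{\phi}]=\lambda\beta_{1}^{2},\qquad \E_{\bar{\mu}}[d_{\phi}^{2}]=\lambda\beta_{1}^{2}+\lambda^{2}\beta_{1}^{2}\beta_{2},\qquad \P_{\bar{\mu}}\{d_{\phi}=0\}=\int_{[0,1]}\dd x\,\ee^{-\lambda\beta_{1}f(x)}.
\]
Similarly, writing $D$ for the offspring size of a generic non-root vertex (a typical $d_{j}$ in \eqref{Dphidef}) and conditioning on its type, whose density is $\beta_{1}^{-1}f$, gives $\E_{\bar{\mu}}[D]=\lambda\beta_{2}$ and $\E_{\bar{\mu}}[D^{2}]=\lambda\beta_{2}+\lambda^{2}\beta_{1}\beta_{3}$, hence $\mathrm{Var}_{\bar{\mu}}(D)=\lambda\beta_{2}+\lambda^{2}(\beta_{1}\beta_{3}-\beta_{2}^{2})$.

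Next I would use the branching structure: conditionally on $d_{\phi}=k\ge 1$, the neighbour offspring sizes $d_{1},\dots,d_{k}$ are i.i.d.\ copies of $D$ and are independent of the root type $Q$, so on $\{d_{\phi}=k\}$ one has $\Delta_{\phi}=k^{-1}S_{k}+1-k$ with $S_{k}=\sum_{j=1}^{k}d_{j}$. After a short bound showing $\E_{\bar{\mu}}[|\Delta_{\phi}|]<\infty$ (e.g.\ via Wald's identity, to license the rearrangements), summing over $k$ gives
\[
\E_{\bar{\mu}}[\Delta_{\phi}]=\sum_{k\in\N}\P_{\bar{\mu}}\{d_{\phi}=k\}\big(\E_{\bar{\mu}}[D]+1-k\big)=\big(\E_{\bar{\mu}}[D]+1\big)\big(1-\P_{\bar{\mu}}\{d_{\phi}=0\}\big)-\E_{\bar{\mu}}[d_{\phi}],
\]
and substituting the quantities above produces \eqref{e1}.

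For the second moment, on $\{d_{\phi}=k\}$ I would expand $\Delta_{\phi}^{2}=k^{-2}S_{k}^{2}+2(1-k)k^{-1}S_{k}+(1-k)^{2}$ and use $\E[S_{k}]=k\E_{\bar{\mu}}[D]$ together with $\E[S_{k}^{2}]=k\E_{\bar{\mu}}[D^{2}]+k(k-1)(\E_{\bar{\mu}}[D])^{2}$; this gives $k^{-2}\E[S_{k}^{2}]+2(1-k)k^{-1}\E[S_{k}]+(1-k)^{2}=k^{-1}\mathrm{Var}_{\bar{\mu}}(D)+(\E_{\bar{\mu}}[D]+1-k)^{2}$, whence
\[
\E_{\bar{\mu}}[\Delta_{\phi}^{2}]=\mathrm{Var}_{\bar{\mu}}(D)\sum_{k\in\N}\frac{\P_{\bar{\mu}}\{d_{\phi}=k\}}{k}+\sum_{k\in\N}\P_{\bar{\mu}}\{d_{\phi}=k\}\big(\E_{\bar{\mu}}[D]+1-k\big)^{2}.
\]
The first series equals $\int_{[0,1]}\dd x\sum_{k\in\N}k^{-1}\ee^{-\lambda\beta_{1}f(x)}(\lambda\beta_{1}f(x))^{k}/k!$ by Tonelli, and the second equals $(\E_{\bar{\mu}}[D]+1)^{2}(1-\P_{\bar{\mu}}\{d_{\phi}=0\})-2(\E_{\bar{\mu}}[D]+1)\E_{\bar{\mu}}[d_{\phi}]+\E_{\bar{\mu}}[d_{\phi}^{2}]$; inserting the moments from the first paragraph and simplifying yields the stated formula for $\E_{\bar{\mu}}[\Delta_{\phi}^{2}]$ (finiteness of $\E_{\bar{\mu}}[\Delta_{\phi}^{2}]$ being checked beforehand via the same kind of bound, using convexity of $x\mapsto x^{2}$).

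Finally, the limiting statements follow by inspection. As $\lambda\downarrow 0$, $\int_{[0,1]}\dd x\,\ee^{-\lambda\beta_{1}f(x)}\to 1$ while the series $\sum_{k}k^{-1}\ee^{-\lambda\beta_{1}f(x)}(\lambda\beta_{1}f(x))^{k}/k!$ is bounded by $1$ and its prefactor $\mathrm{Var}_{\bar{\mu}}(D)$ is $O(\lambda)$, so both $\E_{\bar{\mu}}[\Delta_{\phi}]$ and $\E_{\bar{\mu}}[\Delta_{\phi}^{2}]$ tend to $0$. As $\lambda\to\infty$, the bound $f\ge M_{-}>0$ and dominated convergence give $\int_{[0,1]}\dd x\,\ee^{-\lambda\beta_{1}f(x)}\to 0$, so $\E_{\bar{\mu}}[\Delta_{\phi}]\sim\lambda(\beta_{2}-\beta_{1}^{2})\to\infty$ since $\beta_{2}>\beta_{1}^{2}$ by Cauchy--Schwarz (strict because $f$ is non-constant); then $\E_{\bar{\mu}}[\Delta_{\phi}^{2}]\ge\big(\E_{\bar{\mu}}[\Delta_{\phi}]\big)^{2}\to\infty$. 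The only step requiring genuine care is the branching-structure reduction --- namely verifying that, conditionally on the root degree, the neighbours' offspring counts are i.i.d.\ and independent of the root's type --- together with the routine integrability bookkeeping needed to exchange sums, integrals and limits; everything else is bookkeeping in the $\beta_{m}$'s.
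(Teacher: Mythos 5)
Your proposal is correct and follows essentially the same route as the paper: compute the type-conditioned Poisson moments of $d_{\phi}$ and of a generic non-root offspring count, exploit the branching structure to reduce $\E_{\bar{\mu}}[\Delta_{\phi}]$ and $\E_{\bar{\mu}}[\Delta_{\phi}^{2}]$ to sums over $\{d_{\phi}=k\}$ with i.i.d.\ neighbour offspring sizes, and then obtain the limits via the bounds $M_{-}\le f\le M_{+}$, $\beta_{2}-\beta_{1}^{2}=\mathbb{V}ar(f(U))>0$, and $\E_{\bar{\mu}}[\Delta_{\phi}^{2}]\ge(\E_{\bar{\mu}}[\Delta_{\phi}])^{2}$. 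The algebra and the limiting arguments coincide with the paper's proof up to bookkeeping.
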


\begin{theorem}
\label{thm3IER}
For every $f\in\cF$ and $\lambda \in (0,\infty)$,
\begin{align*}
\P_{\bar{\mu}}\{\Delta_{\phi}\geq x\} \sim \P_{\bar{\mu}}\big\{d_{\phi}=1 \big\} \P_{\bar{\mu}}\big\{d_{1}\geq x\big\},
\qquad x \to \infty.
\end{align*}
In particular,
\begin{align*}
\P_{\bar{\mu}}\{\Delta_{\phi}\geq x\} \asymp \frac{1}{\sqrt{x}}\,\beta_{x}\,
\ee^{-x\log(\frac{x}{\lambda\beta_{1}\ee})}, \qquad x \to \infty,
\end{align*}
where $\beta_{x}=\int_{[0,1]} \dd y f^x(y)$. The asymptotic behaviour of $\beta_x$ can be derived in the following two cases: 
\begin{itemize}
\item[(a)] 
If $\ell_{1}$ is the Lebesgue measure on $[0,1]$ and $\ell_{1}(\{y\in [0,1]\colon\,f(y)=M_{+}\})>0$, then $\beta_{x} \asymp M_{+}^{x}$ as $x\to\infty$.
\item[(b)] 
If $f(y_{\star})=M_{+}$ and $f(y_{\star})-f(y) \asymp |y-y_{\star}|^\alpha$, $y \to y_{\star}$, for some $y_{\star} \in [0,1]$ and $\alpha \in (0,\infty)$, and $f$ is bounded away from $M_{+}$ outside any neighbourhood of $y_{\star}$, then $\beta_{x} \asymp x^{-1/\alpha}\,M_{+}^{x}$ as $x\to\infty$.
\end{itemize}
\end{theorem}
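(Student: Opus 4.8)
The plan is to follow the same route as in the proof of Theorem \ref{thm3HER}, adapting for the multi-type structure. The starting point is the observation from \eqref{Dphidef} that $\Delta_\phi \geq x$ with $d_\phi = 1$ forces $d_1 + 1 - 1 = d_1 \geq x$, i.e. the single offspring of the root must itself have at least $x$ descendants, whereas for $d_\phi = k \geq 2$ one needs $\sum_{j=1}^k (d_j+1) \geq k(x + k - 1)$, which is a large-deviations event for a sum of $k$ independent Poisson variables. First I would show that the contribution of the event $\{d_\phi \geq 2\}$ to $\P_{\bar\mu}\{\Delta_\phi \geq x\}$ is of smaller order than the contribution of $\{d_\phi = 1\}$ as $x \to \infty$. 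This is the same combinatorial/large-deviations comparison as in the homogeneous case: summing several near-independent Poisson tails costs roughly the product of the individual tails, which decays faster than a single tail because each Poisson tail is superexponentially small; the type-averaging (integrating $x_j$ against $f$) only changes constants since $f$ is bounded between $M_-$ and $M_+$. This yields the first display, $\P_{\bar\mu}\{\Delta_\phi \geq x\} \sim \P_{\bar\mu}\{d_\phi = 1\}\,\P_{\bar\mu}\{d_1 \geq x\}$.

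Next I would make the right-hand side explicit. The root has uniform type $Q$, so $\P_{\bar\mu}\{d_\phi = 1\} = \int_{[0,1]} \dd x\, \lambda\beta_1 f(x)\,\ee^{-\lambda\beta_1 f(x)}$, a positive constant depending only on $f$ and $\lambda$. The neighbour $d_1$ has type $Q'$ with density $h_{Q'}(y) = \beta_1^{-1} f(y)\mathbbm{1}_{\{y\in[0,1]\}}$ and, conditionally on its type $y$, a $\mathrm{Poisson}(\lambda\beta_1 f(y))$ offspring count, so
\begin{align*}
\P_{\bar\mu}\{d_1 \geq x\} = \beta_1^{-1} \int_{[0,1]} \dd y\, f(y) \sum_{l \geq x} \frac{\ee^{-\lambda\beta_1 f(y)}(\lambda\beta_1 f(y))^l}{l!}.
\end{align*}
For integer $x\to\infty$ the inner Poisson tail is dominated by its first term $\ee^{-\lambda\beta_1 f(y)}(\lambda\beta_1 f(y))^x/x!$, and applying Stirling's formula $x! \sim \sqrt{2\pi x}\,(x/\ee)^x$ gives a factor $\tfrac{1}{\sqrt{2\pi x}}(\lambda\beta_1 f(y)\ee/x)^x = \tfrac{1}{\sqrt{2\pi x}}\,f(y)^x\,\ee^{-x\log(x/(\lambda\beta_1\ee))}$. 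Integrating against $f(y)\,\dd y$ produces precisely $\beta_x = \int_{[0,1]} \dd y\, f^x(y)$ (up to the $f(y)$ from the density, which raises the exponent by one but is absorbed into the $\asymp$), so that $\mu([x,\infty)) \asymp \tfrac{1}{\sqrt{x}}\,\beta_x\,\ee^{-x\log(x/(\lambda\beta_1\ee))}$. Here I only claim $\asymp$ rather than $\sim$ because the extra factor $f(y)$ inside the integral tilts the measure slightly and the $\ee^{-\lambda\beta_1 f(y)}$ prefactor is also $y$-dependent, but both are bounded above and below by positive constants, so the order is unaffected.

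Finally, for parts (a) and (b) I would carry out a Laplace/Watson-lemma analysis of $\beta_x = \int_{[0,1]} f^x(y)\,\dd y$ as $x \to \infty$, which is governed by the behaviour of $f$ near its maximum $M_+$. In case (a), the set where $f = M_+$ has positive Lebesgue measure $c > 0$, so $\beta_x \geq c\,M_+^x$ trivially, while $\beta_x \leq M_+^x$; hence $\beta_x \asymp M_+^x$. In case (b), writing $f(y) = M_+(1 - \Theta(|y-y_\star|^\alpha))$ near $y_\star$, we get $f^x(y) \approx M_+^x \exp\{-\Theta(x|y-y_\star|^\alpha)\}$, and the substitution $u = x^{1/\alpha}(y-y_\star)$ shows the integral concentrates on a window of width $\asymp x^{-1/\alpha}$, giving $\beta_x \asymp x^{-1/\alpha} M_+^x$; the hypothesis that $f$ is bounded away from $M_+$ outside neighbourhoods of $y_\star$ guarantees the rest of $[0,1]$ contributes only an exponentially smaller term. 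The main obstacle is the first step — rigorously bounding the $\{d_\phi \geq 2\}$ contribution and showing it is negligible — since one must control a sum over $k \geq 2$ of $k$-fold Poisson large-deviation probabilities uniformly in the types; the cleanest way is to bound $\P\{\sum_{j=1}^k (d_j+1) \geq k(x+k-1)\}$ by a Chernoff estimate with rate comparable to $k$ times the single-variable rate at level $x$, and then sum the geometrically-decaying-in-$k$ series, exactly mirroring the argument behind Theorem \ref{thm3HER}.
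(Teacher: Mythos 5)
Your proposal follows essentially the same route as the paper: reduce $\P_{\bar{\mu}}\{\Delta_{\phi}\geq x\}$ to the $d_{\phi}=1$ contribution by showing the $k\geq 2$ terms are of smaller order uniformly in the types (the paper does this via point-probability ratios, Stirling and dominated convergence with dominating sequence $k\,\P_{\bar{\mu}}\{d_{\phi}=k\}$, which is interchangeable with your Chernoff-plus-summation plan), then sandwich the mixed-Poisson tail using $M_-\leq f\leq M_+$ and Stirling to get the $\asymp$ bound with $\beta_x$ (noting, as you do, that the extra $f(y)$ from the size-biased type density only shifts $\beta_x$ to $\beta_{x+1}\asymp\beta_x$), and finish (a) by the trivial sandwich and (b) by a Laplace-type estimate near $y_\star$. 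The only blemish is the immaterial arithmetic slip in the threshold for $d_{\phi}=k$ (it should be $\sum_{j=1}^{k}d_j\geq kx+k(k-1)$), which does not affect the asymptotics.
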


Theorems~\ref{thm1IER}--\ref{thm3IER} show that for the inhomogeneous Erd\H{o}s-R\'enyi random graph, which is a simple graph, the properties of $\mu$ are again explicitly computable. In addition, as seen in Figure \ref{fig3}, numerical computations indicate that $\mu([0,\infty))$ is a strictly decreasing function of $\lambda$, regardless which $f\in\cF$ is used, which leads us to \emph{conjecture} that $\mu$ is significant for all $f\in\cF$ and $\lambda \in (0,\infty)$ (in support of a possible extension of Conjecture~\ref{conjecture1} to a case where $X_{0}$ has a different distribution than the other random variables).

\begin{figure}[htbp]
\centering
\includegraphics[scale=0.5]{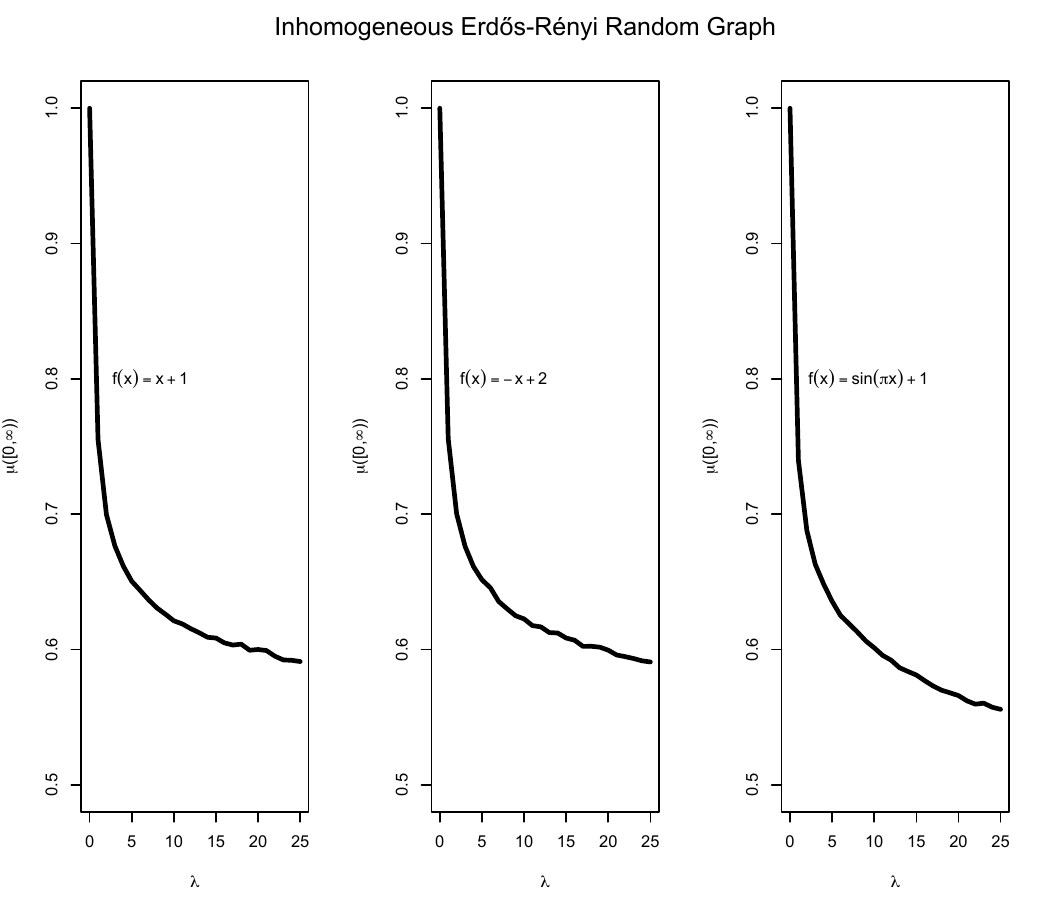}
\caption{\label{fig3} \textsl{Numerical plot of the map $\lambda\mapsto \mu([0,\infty))$ for $\IER_{n}(\lambda f)$ with an increasing, a decreasing and a non-monotonic function $f$, estimated with the help of Monte Carlo integration.}}
\end{figure}

In Theorem~\ref{thm2IER}, the term $\beta_{2}-\beta_{1}^{2}$ is the variance of $f(U)$ when $U$ is a uniform $[0,1]$ random variable. If $f$ would be a constant function, then the third term in \eqref{e1} would vanish, which would yield $\E_{\bar{\mu}}[\Delta_{\phi}] \to 1$ as $ \lambda\to\infty$. However, because $f$ is non-constant, the inhomogeneity may cause an individual to have friends who have significantly more friends than the individual does. For example, if $f(x) = a\mathbbm{1}_{A}(x) + b\mathbbm{1}_{B}(x)$ with $a<b$, $A \cap B=\emptyset$ and $A \cup B=[0,1]$, then $f$ divides the individuals into \emph{two communities} $A_{n} = nA \cap [n]$ and $B_{n} = nB \cap [n]$. Two individuals inside community $A_{n}$ are friends with probability $a^{2}\lambda/n$, while two individuals inside community $B_{n}$ are friends with probability $b^{2}\lambda/n$. The friends of individuals in community $A_{n}$ can live in community $B_{n}$, where individuals are more likely to have friends, which explains why $\E_{\bar{\mu}}[\Delta_{\phi}] \to \infty$ as $ \lambda\to\infty$.

\paragraph{Ideas behind the proofs.}
Theorem \ref{thm1IER}(a) follows from the definition of the probability function $\nu_{0}(\cdot) = \int_{[0,1]} \dd x\, c_{\cdot}(f(x))$ for the offspring distribution of the root, and the fact that the random variable $\sum_{j=1}^{k} d_{j}$, given that the type of the $j$-th child of the root is $x_{j}$ for each $j\in\{1,\ldots ,k\}$, has distribution $\text{Poisson}(\lambda\beta_{1}\sum_{j=1}^{k} f(x_{j}))$. The first limit of Theorem \ref{thm1IER}(b) is obtained by noting that $\mu([0,\infty))\ge \ee^{-\lambda\beta_{1} M_{+}}$, the second limit is obtained by proving the following statement using the central limit theorem and restricting to the event $\{|d_{\phi}-\lambda\beta_{1}f(x)|\leq\sqrt{\lambda\beta_{1}f(x)}\,\log \lambda\}$:
\[
\lim_{\lambda\rightarrow\infty}\P_{\bar{\mu}}\left\{\sum_{j=1}^{d_{\phi}}d_{j}\geq d_{\phi}(d_{\phi}-1),d_{\phi} 
\geq 1\mid (Q, Q^{\prime})=(x,y)\right\} =\mathbbm{1}_{\left\{(x,y)\in A_{f}\right\}}
+ \tfrac{1}{2}\mathbbm{1}_{\left\{(x,y)\in B_{f}\right\}}.
\] 
To establish the latter, a further restriction to the event $\{|d_{\phi}-\lambda\beta_{1}f(x)|>\sqrt{\lambda\beta_{1}f(x)}\,(\log \lambda)^{-1}\}$ is needed when $(x,y)\in B_{f}$. Theorems \ref{thm1IER}(c-d) follow by applying Fubini's theorem, which gives
\[
\Upsilon_{f}=\iint_{A_{f}}\dd x\,\dd y\,\beta_{1}^{-1}f(x) =1-\P_{\bar{\mu}}\big\{(Q,Q^{\prime})\in A_{f}\big\}
-\P_{\bar{\mu}}\big\{(Q,Q^{\prime})\in B_{f}\big\},
\] 
where $\P_{\bar{\mu}}\big\{(Q,Q^{\prime})\in A_{f}\big\}=0$ when $\ell_{2}(A_{f})=0$ and is larger than $\Upsilon_{f}$ when $\ell_{2}(A_{f})>0$. Theorem \ref{thm1IER}(e) follows from the fact that $\P_{\bar{\mu}}\{(Q,Q^{\prime})\in B_{f}\}=0$ when $f$ is injective. Theorem \ref{thm2IER} follows from the fact that $\E_{\bar{\mu}}[d_{\phi}]=\lambda\beta_{1}^{2}$, $\E_{\bar{\mu}}[d_{1}] = \lambda\beta_{2}$, $\E_{\bar{\mu}}[d_{\phi}^{2}] = \lambda\beta_{1}^{2}+\lambda^{2}\beta_{1}^{2}\beta_{2}$ and $\E_{\bar{\mu}}[d_{1}^{2}] = \lambda\beta_{2}+\lambda^{2}\beta_{1}\beta_{3}$. The steps neede to prove the main part of Theorem \ref{thm3IER} are similar to those for Theorem \ref{thm3HER}. The proof of part (a) is straightforward, while the proof of part (b) relies on the asymptotics
\[
\frac{\beta_{x}}{M_{+}^{x}} \asymp\int_{[0,1]} \dd y\,\exp\{-x\,\frac{|y_{\star}-y|^\alpha}{M_{+}}\} 
\asymp \int_{y_{\star}-x^{-1/\alpha}}^{y_{\star}+x^{-1/\alpha}} \dd y, \qquad x \to \infty. 
\]
For more details we refer to Section \ref{sec:proofs2}.


\subsection{Configuration model}
\label{sec:CM}

For $n\in\N$ and a sequence of deterministic non-negative integers $\mathbf{d}_{n}=(d_{1,n},\ldots ,d_{n,n})$, let $\CM_{n}(\mathbf{d}_{n})$ be the random graph on $[n]$ drawn uniformly at random from the set of all graphs with the same degree sequence $\mathbf{d}_{n}$.  Let $D_{n}$ be the degree of a uniformly chosen vertex in $[n]$. If $D_{n}$ converges in distribution to a random variable $D$ with $\P\{D>0\}=1$ such that $\E[D_{n}] =\frac{1}{n} \sum_{i \in [n]} d_{i,n} \to \E[D]<\infty$, then this random graph converges locally in probability to a \emph{unimodular branching process tree} with \emph{root offspring distribution} $p=(p_{k})_{k\geq 0}$ given by $p_{k} = \P\{D=k\}$ \cite[Theorem 4.1]{RvdH2} and with \emph{biased offspring distribution} $p^{\star}=(p^{\star}_{k})_{k\geq 0}$ given by $p^{\star}_{k} = (k+1)p_{k+1}/\E[D]$ for all the other vertices \cite[Definition 1.26]{RvdH2}. 

Note that $\CM_{n}(\mathbf{d}_{n})$ is a multi-graph, i.e., it possibly has self-loops and multiple edges. In many real-life applications, $D$ often exhibits a power-law distribution. Our first result shows that assuming $D$ to have a power-law distribution with a finite first moment implies the significance of the friendship paradox.

\begin{theorem}
\label{thm1CM}
Consider the special case where $p_{k} = k^{-\tau}/\zeta(\tau)$, $\tau>2$, with $\zeta$ the Riemann zeta function. Then $\mu([0,\infty)) > \frac{1}{2}$ for all $\tau>2$. In addition, $\lim_{\tau \downarrow 2} \mu([0,\infty)) = 1$ and $\lim_{\tau \to \infty} \mu([0,\infty)) = 1$.
\end{theorem}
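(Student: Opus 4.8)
The plan is to work throughout with the explicit description of $\mu$ as the law of $\Delta_\phi$ for the unimodular branching process tree that is the local limit of $\CM_n(\mathbf{d}_n)$ (described just before the theorem): the root has degree $d_\phi\sim p$ with $p_k=k^{-\tau}/\zeta(\tau)$, and its neighbours have i.i.d.\ offspring sizes $d_1,d_2,\ldots\sim p^\star$. First I would record that $\E[d_\phi]=\zeta(\tau-1)/\zeta(\tau)$, so that $p^\star_k=(k+1)p_{k+1}/\E[d_\phi]=(k+1)^{-(\tau-1)}/\zeta(\tau-1)$ for $k\in\N_0$, and in particular $p^\star_0=1/\zeta(\tau-1)$. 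Since $p_0=0$ the root is a.s.\ not isolated, so by \eqref{Dphidef}
\[
\mu([0,\infty))=\P_{\bar\mu}\Big\{\sum_{j=1}^{d_\phi}d_j\ge d_\phi(d_\phi-1)\Big\}=\sum_{k\in\N}p_k\,\P\Big\{\sum_{j=1}^k d_j\ge k(k-1)\Big\},
\]
with $d_1,\ldots,d_k$ i.i.d.\ of law $p^\star$ on the right. This identity underlies all three assertions.

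For $\mu([0,\infty))>\tfrac12$ I would keep only the $k=1$ term: given $d_\phi=1$ one has $\Delta_\phi=d_1\ge0$, so that term equals $p_1$ while all other terms are non-negative; hence $\mu([0,\infty))\ge p_1=1/\zeta(\tau)$. Since $\zeta$ is strictly decreasing on $(1,\infty)$ with $\zeta(2)=\pi^2/6<2$, this gives $\mu([0,\infty))\ge 1/\zeta(\tau)>1/\zeta(2)>\tfrac12$ for every $\tau>2$. The limit $\tau\to\infty$ then comes for free, since $p_1=1/\zeta(\tau)\to1$ and $\mu([0,\infty))\le1$.

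The one genuinely substantive point is $\lim_{\tau\downarrow2}\mu([0,\infty))=1$: here $p_1\to6/\pi^2\ne1$, so I must exploit that the biased law spreads out, quantitatively $\zeta(\tau-1)\to\infty$ as $\tau\downarrow2$. The plan is to bound the complementary probability. Only $k\ge2$ contributes to $\{\Delta_\phi<0\}$, and $\{\sum_{j=1}^k d_j<k(k-1)\}\subseteq\{d_1<k(k-1)\}$ because the $d_j$ are non-negative, so, using the crude estimate $\P\{d_1<m\}\le m\,p^\star_0=m/\zeta(\tau-1)$,
\[
1-\mu([0,\infty))=\sum_{k\ge2}p_k\,\P\Big\{\sum_{j=1}^k d_j<k(k-1)\Big\}\le\sum_{k\ge2}p_k\min\Big\{1,\tfrac{k^2}{\zeta(\tau-1)}\Big\}.
\]
Splitting the sum at a threshold $K$ and using $\sum_{2\le k\le K}p_kk^2\le K$ (each summand $p_kk^2=k^{2-\tau}/\zeta(\tau)\le1$ for $\tau>2$) together with $\sum_{k>K}p_k\le\sum_{k>K}k^{-2}\le1/K$, one gets
\[
1-\mu([0,\infty))\le\frac{K}{\zeta(\tau-1)}+\frac1K\qquad(K\in\N,\ \tau>2),
\]
and letting first $\tau\downarrow2$ (so $\zeta(\tau-1)\to\infty$) and then $K\to\infty$ yields the claim.

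I expect this last limit to be the main obstacle. The difficulty is that $\E[d_\phi]=\zeta(\tau-1)/\zeta(\tau)\to\infty$ as $\tau\downarrow2$, so no law-of-large-numbers heuristic is available, and even the second moment $\E[d_\phi^2]=\zeta(\tau-2)/\zeta(\tau)$ is infinite for $\tau\le3$, which is why the sum over $k$ must be truncated rather than estimated by a single moment. The crux is to isolate the one-line bound $\P\{d_1<m\}\le m/\zeta(\tau-1)$, which converts the escape-to-infinity of $p^\star$ into a usable quantitative estimate; once that is in place, the two-piece split above is routine.
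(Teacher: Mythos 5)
Your proposal is correct, and its skeleton matches the paper's: the same expansion $\mu([0,\infty))=\sum_{k\ge1}p_k\,\P\{\sum_{j=1}^k d_j\ge k(k-1)\}$ with $d_j$ i.i.d.\ $p^\star$, the same one-term bound $\mu([0,\infty))\ge p_1=1/\zeta(\tau)>1/\zeta(2)=6/\pi^2>\tfrac12$ for significance, and the same observation that this single term already gives $\lim_{\tau\to\infty}\mu([0,\infty))=1$. Where you diverge is the $\tau\downarrow2$ limit. The paper also reduces to $d_1$ alone, but from the other side: it bounds $\P\{d_1\ge k(k-1)\}$ from below via integral-test estimates on the truncated zeta function, getting $\P\{d_1\ge k(k-1)\}\ge k^{-2\tau+4}/(\tau-1)$ uniformly in $k$, and hence the closed-form bound $\mu([0,\infty))\ge\zeta(3\tau-4)/\big((\tau-1)\zeta(\tau)\big)$, which tends to $1$ as $\tau\downarrow2$. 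You instead bound the complement, using $\{\sum_j d_j<k(k-1)\}\subseteq\{d_1<k(k-1)\}$ and the crude estimate $\P\{d_1<m\}\le m/\zeta(\tau-1)$ (valid since $p^\star_j$ is non-increasing with $p^\star_0=1/\zeta(\tau-1)$), then truncate the sum over $k$ at $K$ to get $1-\mu([0,\infty))\le K/\zeta(\tau-1)+1/K$ and take $\tau\downarrow2$ followed by $K\to\infty$; all steps check out ($p_kk^2\le1$ for $k\ge2$, $\sum_{k>K}p_k\le1/K$, and $\zeta(\tau-1)\to\infty$). The trade-off: the paper's route yields an explicit quantitative lower bound on $\mu([0,\infty))$ valid for every $\tau>2$ (and a rate as $\tau\downarrow2$), while yours is more elementary — no zeta-tail asymptotics, just monotonicity of $p^\star$ — but produces only the limit statement, via a double limit, with no rate.
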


The next result describes the first and second moment of $\Delta_{\phi}$ and for this we do not need to assume that $D$ has a power law distribution. 
\begin{theorem}
\label{thm2CM}
Let $D$ be such that $\P(D>0)=1$ and $\E[D]<\infty$. Then 
\begin{align*}
\E_{\bar{\mu}}\big[\Delta_{\phi}\big] = \dfrac{\mathbb{V}\mathrm{ar}(D)}{\E[D]}.
\end{align*}
For every $D$ satisfying the above conditions and $\E[D^2]<\infty$,
\begin{align*}
\E_{\bar{\mu}}\big[\Delta_{\phi}^{2}\big] = \dfrac{\left(\E[D^{3}]\E[D]-(\E[D^{2}])^{2}\right)
\E[D^{-1}]+\E[D^{2}]\mathbb{V}\mathrm{ar}(D)}{(\E[D])^{2}}.
\end{align*}
In particular, $\mathbb{V}\mathrm{ar}_{\bar{\mu}}(\Delta_{\phi})\geq \mathbb{V}\mathrm{ar}(D)$.
\end{theorem}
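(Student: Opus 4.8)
The plan is to use the explicit description of the local limit and reduce the two identities to moment computations for the size-biased offspring law. Recall that $(G_{\infty},\phi)$ is a unimodular branching process tree: the root degree $d_{\phi}$ has law $p$, so $d_{\phi}\disteq D$, while conditionally on $d_{\phi}$ the offspring sizes $d_{1},\dots,d_{d_{\phi}}$ of the neighbours of the root are i.i.d.\ copies of a random variable $D^{\star}$ with law $p^{\star}$, i.e.\ $\P\{D^{\star}=k\}=(k+1)p_{k+1}/\E[D]$ for $k\in\N_{0}$. Since $\P\{D>0\}=1$ we have $d_{\phi}\geq 1$ almost surely, so the indicator in \eqref{Dphidef} equals $1$ a.s.\ and
\begin{equation*}
\Delta_{\phi}=1-d_{\phi}+\frac{1}{d_{\phi}}\sum_{j=1}^{d_{\phi}}d_{j}.
\end{equation*}
First I would record the size-biasing identities $\E[D^{\star}]=\E[D(D-1)]/\E[D]$ (finite since $\E[D^{2}]<\infty$) and, starting from $\E[(D^{\star})^{2}]=\E[(D-1)^{2}D]/\E[D]$ and simplifying the numerator,
\begin{equation*}
\mathbb{V}ar(D^{\star})=\frac{\E[D]\,\E[D^{3}]-(\E[D^{2}])^{2}}{(\E[D])^{2}}
\end{equation*}
(finite when $\E[D^{3}]<\infty$).

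Next I would condition on $d_{\phi}=k$. Since $\sum_{j=1}^{k}d_{j}$ is a sum of $k$ i.i.d.\ copies of $D^{\star}$, the tower property together with $\mathbb{V}ar\big(\sum_{j=1}^{k}d_{j}\big)=k\,\mathbb{V}ar(D^{\star})$ gives
\begin{align*}
\E_{\bar{\mu}}[\Delta_{\phi}\mid d_{\phi}=k]&=1-k+\E[D^{\star}],\\
\E_{\bar{\mu}}[\Delta_{\phi}^{2}\mid d_{\phi}=k]&=\big(1-k+\E[D^{\star}]\big)^{2}+\frac{\mathbb{V}ar(D^{\star})}{k}.
\end{align*}
Averaging the first line over $d_{\phi}\disteq D$ yields $\E_{\bar{\mu}}[\Delta_{\phi}]=1-\E[D]+\E[D^{\star}]$, which collapses to $\mathbb{V}ar(D)/\E[D]$ after inserting the formula for $\E[D^{\star}]$. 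For the second moment I would average the second line, using $\E_{\bar{\mu}}\big[(1-d_{\phi}+\E[D^{\star}])^{2}\big]=\mathbb{V}ar(d_{\phi})+(\E_{\bar{\mu}}[\Delta_{\phi}])^{2}=\mathbb{V}ar(D)+(\E_{\bar{\mu}}[\Delta_{\phi}])^{2}$ and $\E_{\bar{\mu}}[d_{\phi}^{-1}]=\E[D^{-1}]$, to obtain
\begin{equation*}
\E_{\bar{\mu}}[\Delta_{\phi}^{2}]=\mathbb{V}ar(D)+\big(\E_{\bar{\mu}}[\Delta_{\phi}]\big)^{2}+\mathbb{V}ar(D^{\star})\,\E[D^{-1}].
\end{equation*}
Substituting $\E_{\bar{\mu}}[\Delta_{\phi}]=\mathbb{V}ar(D)/\E[D]$ and the formula for $\mathbb{V}ar(D^{\star})$, and merging the first two terms via $(\E[D])^{2}+\mathbb{V}ar(D)=\E[D^{2}]$, produces exactly the asserted expression. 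The same display immediately gives $\mathbb{V}ar_{\bar{\mu}}(\Delta_{\phi})=\mathbb{V}ar(D)+\mathbb{V}ar(D^{\star})\,\E[D^{-1}]\geq\mathbb{V}ar(D)$, since $D\geq 1$ a.s.\ makes $\E[D^{-1}]$ finite and positive while $\mathbb{V}ar(D^{\star})\geq 0$.

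There is no genuine obstacle here: it is a finite computation organised around the size-biased law and iterated conditioning, and the only care needed is with integrability at the boundary. When $\E[D^{2}]<\infty$ but $\E[D^{3}]=\infty$ one has $\mathbb{V}ar(D^{\star})=\infty$ and the right-hand side of the second formula is $+\infty$; one then checks that $\E_{\bar{\mu}}[\Delta_{\phi}^{2}]=+\infty$ as well, from $\E_{\bar{\mu}}\big[(d_{\phi}^{-1}\sum_{j=1}^{d_{\phi}}d_{j})^{2}\big]=(\E[D^{\star}])^{2}+\mathbb{V}ar(D^{\star})\,\E[D^{-1}]=+\infty$ together with $(d_{\phi}^{-1}\sum_{j=1}^{d_{\phi}}d_{j})^{2}\leq 2\Delta_{\phi}^{2}+2(d_{\phi}-1)^{2}$ and $\E[(d_{\phi}-1)^{2}]<\infty$. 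An analogous remark covers the first formula when $\E[D^{2}]=\infty$ (both sides are then $+\infty$, using $\Delta_{\phi}\geq 1-d_{\phi}$ to bound the negative part). Thus both identities are to be read in $[0,\infty]$.
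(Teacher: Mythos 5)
Your proposal is correct and follows essentially the same route as the paper: condition on the root degree, use that $d_{1},\dots,d_{d_{\phi}}$ are i.i.d.\ with the size-biased law $p^{\star}$, and compute the first and second moments (your conditional mean/variance bookkeeping is just a reorganisation of the paper's term-by-term expansion of the square). Your final inequality via $\mathbb{V}ar(D^{\star})\geq 0$ is the same fact the paper derives through Cauchy--Schwarz, namely $\E[D^{3}]\E[D]\geq(\E[D^{2}])^{2}$, and your discussion of the infinite-moment boundary cases is a correct, harmless addition beyond what the paper records.
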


The next result derives the tail of the measure $\mu$ when $D$ follows a power law distribution.

\begin{theorem}
\label{thm3CM}
Consider the special case where $p_{k} = k^{-\tau}/\zeta(\tau)$, $\tau>2$. Then
\begin{align*}
\P_{\bar{\mu}}\{\Delta_{\phi}\geq x\} \asymp \P_{\bar{\mu}}\big\{d_{1}\geq x\big\}, \qquad x \to \infty.
\end{align*} 
In particular,
\begin{align*}
\P_{\bar{\mu}}\{\Delta_{\phi}\geq x\} \asymp x^{-(\tau-2)}, \qquad x \to \infty.
\end{align*}
\end{theorem}

Theorems~\ref{thm1CM}--\ref{thm3CM} show that for the configuration model, which is a multi-graph, the properties of $\mu$ are somewhat harder to come by than for the homogeneous or inhomogeneous Erd\H{o}s-R\'enyi Random Graph. Fig.~\ref{fig4} shows numerical upper and lower bound on $\mu([0,\infty))$ for the special case where $p_{k} = k^{-\tau}/\zeta(\tau)$, $\tau>2$. The black curves are given by the following maps:
\begin{equation*}
\begin{array}{ll}
&\tau\mapsto \sum_{k=1}^{200} p_{k}\P_{\bar{\mu}}\big\{d_{1}\geq k(k-1)\big\},\\[0.2cm]
&\tau\mapsto 1\wedge \sum_{k=1}^{200} kp_{k}\P_{\bar{\mu}}\big\{d_{1}\geq k-1\big\}.
\end{array}
\end{equation*}
Fig.~\ref{fig4} shows that $\mu([0,\infty))$ is not a monotone function of $\tau$, and is always larger than $\tfrac45$. It achieves its minimum value inside the interval $[2,4]$.

\begin{figure}[htbp]
\centering
\includegraphics[scale=0.4]{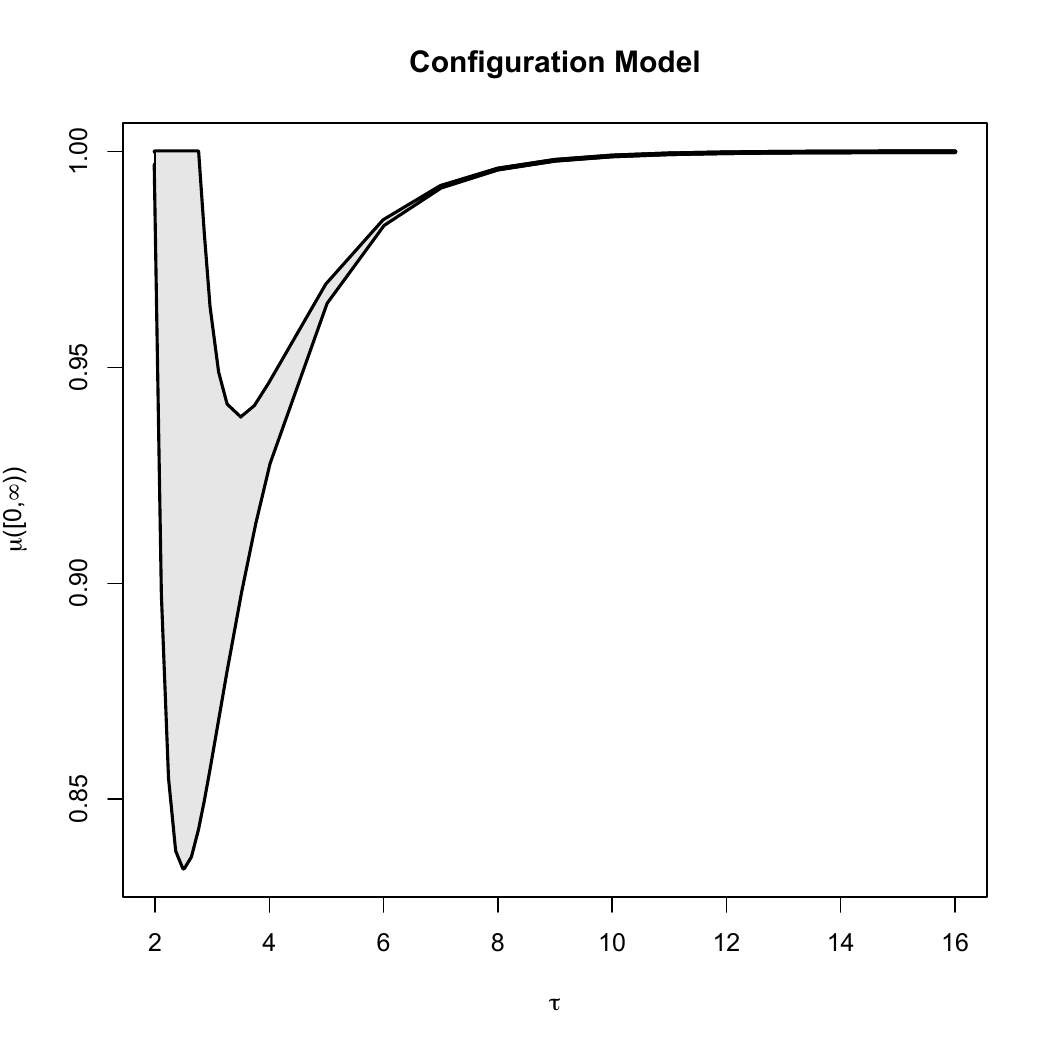}
\caption{\label{fig4} \textsl{Numerical plot of the region containing the map $\tau\mapsto \mu([0,\infty))$ for $\CM_{n}(\mathbf{d}_{n})$ with $p_{k} = k^{-\tau}/\zeta(\tau)$.}}
\end{figure}

The following example shows that for the configuration model with a bimodal degree distribution the friendship paradox is significant as soon as one of the two degrees is $1$.

\begin{example}
{\rm Consider the case where the limit is a tree with $p_{k} = p\mathbbm{1}_{\{M_{1}\}}(k)+(1-p)\mathbbm{1}_{\{M_{2}\}}(k)$ for some $M_{1},M_{2}\in\N$, $M_{1}\leq M_{2}$ and $p\in(0,1)$. Then
\begin{align*}
p_{k}^{\star} = \dfrac{M_{1}p}{M_{1}p+M_{2}(1-p)}\mathbbm{1}_{\{M_{1}-1\}}(k)
+\dfrac{M_{2}(1-p)}{M_{1}p+M_{2}(1-p)}\mathbbm{1}_{\{M_{2}-1\}}(k).
\end{align*}
If $M_{1}=M_{2}$, then $p_{k} =\mathbbm{1}_{\{M_{1}\}}(k)$, $p_{k}^{\star}=\mathbbm{1}_{\{M_{1}-1\}}(k)$ and $\mu([0,\infty))=1$. If $M_{1}<M_{2}$, then 
\begin{align}
\label{bimodal}
\mu([0,\infty))= p+(1-p)\bigg(\dfrac{M_{2}(1-p)}{M_{1}p+M_{2}(1-p)}\bigg)^{M_{2}},
\end{align}
which is $>\frac{1}{2}$ when $p\in[\tfrac{1}{2},1)$. If $M_{1}=1<M_{2}$, $p\in(0,\frac{1}{2})$. Suppose $\mu([0,\infty))\leq\frac{1}{2}$, then \eqref{bimodal} implies
\begin{align*}
M_{2}\log\Big(1+\frac{p}{M_{2}(1-p)}\Big)\geq \log\Big(\frac{1-p}{\tfrac{1}{2}-p}\Big),
\end{align*}
which (because $\log(1+x)\leq x$, $x\geq 0$) in turn implies that
\begin{align*}
\log\Big(\frac{1-p}{\tfrac{1}{2}-p}\Big)-\frac{p}{1-p}\leq 0,
\end{align*}
which is a contradiction. Consequently, if $M_{1}=1<M_{2}$ and $p\in(0,1)$, then $\mu([0,\infty))>\frac{1}{2}$.}
\hfill$\spadesuit$ 
\end{example}

The following example shows that for the configuration model the friendship paradox is not always significant.

\begin{example}
{\rm Let $\mathbf{d}_{n}$, $n\in \N$, be such that $\lfloor\tfrac{n}{10}\rfloor$ degrees are $9$ and the other degrees are $10$. It is easily checked that $\CM_{n}(\mathbf{d}_{n})$ converges locally in probability to a tree with root offspring distribution $p_{k} = \tfrac{1}{10}\mathbbm{1}_{\{9\}}(k)+\tfrac{9}{10}\mathbbm{1}_{\{10\}}(k)$, and that $\mu([0,\infty))=\tfrac{1}{10}+\tfrac{9}{10}\big(\tfrac{10}{11}\big)^{10} < \frac{1}{2}$. The latter follows from \eqref{bimodal} with $p=\tfrac{1}{10}$, $M_{1}=9$ and $M_{2}=10$.}\hfill$\spadesuit$
\end{example}

\paragraph{Ideas behind the proofs.} Since $\mu([0,\infty)) \geq \max\{\zeta(3\tau - 4)/((\tau-1) \zeta(\tau)), 1/\zeta(\tau)\}$, Theorem \ref{thm1CM} follows. The proof of Theorem \ref{thm2CM} follows from $\E_{\bar{\mu}}[d_{1}] = (\E[D^{2}]-\E[D])/\E[D]$ when $\E[D]<\infty$, and $\E_{\bar{\mu}}[d_{1}^{2}] = (\E[D^{3}]+\E[D]-2\E[D^{2}])/\E[D]$ when $\E[D^2]<\infty$ as well. In the latter case, the Cauchy-Schwarz inequality gives $\E_{\bar{\mu}}[\Delta_{\phi}^{2}] \geq \E[D^{2}]\mathbb{V}\mathrm{ar}(D)/(\E[D])^{2}$. Theorem \ref{thm3CM} follows by showing that 
\[
\frac{(x+2)^{-\tau +2}}{(\tau-2)\zeta(\tau -1)} \leq \P_{\bar{\mu}}\{d_{1}\geq x\}\leq \frac{x^{-\tau +2}}{(\tau -2)\E[D]},
\qquad x>0.
\] 
For more details we refer to Section \ref{sec:proofs2}.


\subsection{Preferential attachment model}
\label{sec:PAM}

Imagine a social network with new individuals arriving one by one, expanding the social network by one vertex at each arrival. The new individual makes connections with the other individuals by becoming acquainted with them. A realistic assumption is that the new individual is more likely to become acquainted with individuals who already have a large number of acquaintances, which is sometimes called the \emph{rich-get-richer} phenomenon (\cite[Chapter 1]{RvdH2}).
The preferential attachment model describes networks that grow over time, where newly added vertices are more likely to connect to old vertices with higher degrees than to old vertices with lower degrees. It is a graph sequence denoted by $(\PAM_{n}^{(m,\delta)})_{n\in\N}$, depending on two parameters $m\in\N$ and $\delta \geq -m$, such that at each time $n$ there are $n$ vertices and $mn$ edges, i.e., the total degree is $2mn$. For the friendship paradox, it is natural to assume $m=1$. For $m>1$: at time $1$ there is one vertex that establishes $m$ friendship relations with itself, while at time $2$ the new vertex either establishes $m$ friendship relations with itself or $m$ friendship relations with the previous vertex. So the friendship is higher when $m>1$.  Henceforth we restrict ourselves to the more natural case where $m=1$, so that one vertex with one edge is added per unit time. The following brief overview is taken from \cite[Chapter 8]{RvdH1}.

Fix $\delta \geq -1$. In this case, $\PAM_{1}^{(1,\delta)}$ consists of a single vertex $v_{1}$ with a single self-loop. For $n\in\N$, suppose that $v_{1},\ldots,v_{n}$ are the vertices of $\PAM_{n}^{(1,\delta)}$ with degrees $d_{1,n},\ldots,d_{n,n}$, respectively. Then, given $\PAM_{n}^{(1,\delta)}$, the rule for obtaining $\PAM_{n+1}^{(1,\delta)}$ is such that a single vertex $v_{n+1}$ with a single edge attached to a vertex in $\{v_{1},\ldots,v_{n+1}\}$ is added according to the following probabilities:
\begin{align*}
\P \big\{v_{n+1} \to  v_{i}~\big|~\PAM_{n}^{(1,\delta)}\big\} 
= \left\{
\begin{array}{ll}
\dfrac{d_{i,n}+\delta}{n(2+\delta)+(1+\delta)}, &\text{if } i \in [n],\\
\dfrac{1+\delta}{n(2+\delta)+(1+\delta)}, &\text{if } i = n+1.
\end{array} 
\right.
\end{align*}
The role of the parameter $\delta$ is that of a shift: for $\delta = 0$ the attachment probabilities are linear in the degrees, for $\delta \neq 0$ affine in the degrees. The $\delta$ also allows some flexibility in terms of the degree distribution. The power law exponent of the degree distribution is given by $3+\delta/m$. 

For $\delta > -1$, $\PAM_{n}^{(1,\delta)}$ converges locally in probability to a multi-type discrete-time branching process called the \textit{P\'{o}lya point tree} \cite[Theorem 5.26]{RvdH2} (see also \cite{BBCS}).\footnote{It is worth noting here that $(\PAM_{n}^{(1,\delta)})_{n\in\N}$ is denoted by $(\PAM_{n}^{(1,\delta)}(a))_{n\in\N}$ in \cite{RvdH2}.} In this tree, each vertex $u$ is labelled by finite words as $u=u^{1}u^{2} \cdots u^{l}$, as in the Ulam-Harris way of labelling trees and $\phi$ represents the empty word or the root of the tree. Then, $u=\phi$ carries a type $A_{u}$ and a vertex $u\neq\phi$ carries a type $(A_{u},L_{u})$, where $A_{u} \in [0,1]$ specifies the age of $u$ and  $L_{u}$ a label in $\{\mathsf{y},\mathsf{o}\}$ that indicates whether $u$ is younger than its parent by denoting $L_{u}=\mathsf{y}$ or older than its parent by denoting $L_{u}=\mathsf{o}$. An old child is actually the older neighbour to which the initial edge of the parent is connected, whereas younger children are younger vertices that use an edge to connect to their parent. The number of children with label $\mathsf{o}$ of a vertex is either $0$ or $1$ and defined deterministically, but the number of children with label $\mathsf{y}$ of a vertex is random. This rooted tree is defined inductively as follows. 

First, the root $\phi$ takes an age $A_{\phi}$ that is chosen uniformly at random from $[0,1]$, but it does not take any label in $\{\mathsf{y},\mathsf{o}\}$ because it has no parent. Then, by recursion, the remainder of the tree is constructed in such a way that if the vertex $u$ has the Ulam-Harris word $u^{1}u^{2} \cdots u^{l}$ and the type $(A_{u},L_{u})$ (or $A_{u}$ in the case that $u=\phi$), then $u_{j}=uj= u^{1}u^{2} \cdots u^{l}j$ is defined as follows:
\begin{itemize}
\item[(1)] 
If either $L_{u}=\mathsf{o}$ or $u=\phi $, then $u$ has $\no(u) = 1$ offspring with label $ \mathsf{o} $. Otherwise it has $\no(u) = 0$ offspring with label $\mathsf{o}$. In the case $\no(u) = 1$, set $u_{1}$ as the offspring of $u$ with label $\mathsf{o}$. This offspring has an age $A_{u_{1}}$ given by  
\begin{equation*}
A_{u_{1}} = U_{u_{1}}^{\frac{2+\delta}{1+\delta}} A_{u},
\end{equation*}
where $U_{u_{1}}$ is a uniform $[0,1]$ random variable that is independent of everything else.
\item[(2)] 
Let $A_{u_{\no(u)+1}},\ldots,A_{u_{\no(u)+\ny(u)}}$ be the (ordered) points of a Poisson point process on $[A_{u},1]$ with intensity 
\begin{equation*}
\rho_{u}(x) = \dfrac{\Gamma_{u}}{2+\delta}\dfrac{x^{-(1+\delta)/(2+\delta)}}{A_{u}^{1/(2+\delta)}},
\end{equation*}
where $\Gamma_{u}$ is a random variable selected independently of everything else as
\begin{align*}
\Gamma_{u} \overset{d}=\left\{
\begin{array}{ll}
\text{Gamma}(1+\delta,1), &\text{if } u \text{ is the root or of label } \mathsf{y}, \\
\text{Gamma}(2+\delta ,1), &\text{if } u\text{ is of label } \mathsf{o}.
\end{array} 
\right.
\end{align*}
Then the vertices $u_{\no(u)+1},\ldots,u_{\no(u)+\ny(u)}$ are defined as the children of $u$ with label $\mathsf{y}$.
\end{itemize}

Note that $(\PAM_{n}^{(1,\delta)})_{n\in\N}$ is a multi-graph, i.e., it possibly has self-loops and multiple edges. 

\begin{theorem}
\label{thm1PAM}
$\mu([0,\infty)) > \tfrac{1}{2}$. In addition, $\lim_{\delta \downarrow -1} \mu([0,\infty)) = 1$. 
\end{theorem}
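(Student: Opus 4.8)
The plan is to compute $\mu([0,\infty))$ directly from the P\'olya point tree description, using the recursive variable $\Delta_\phi$ from \eqref{Dphidef}. Recall that $\Delta_\phi = \big[\frac1{d_\phi}\sum_{j=1}^{d_\phi}(d_j+1) - d_\phi\big]\mathbbm 1_{\{d_\phi\neq 0\}}$, where $d_\phi$ is the degree of the root and $d_j$ is the number of offspring of neighbour $j$. In the P\'olya point tree with $m=1$ the root $\phi$ has no label, so by rule (1) it always has one child of label $\mathsf o$, plus a ${\rm Poisson}$ number of children of label $\mathsf y$ (rule (2)); hence $d_\phi \geq 1$ almost surely and the indicator is always $1$. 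Writing $d_\phi = 1 + Y_\phi$ with $Y_\phi$ the number of $\mathsf y$-children, the event $\{\Delta_\phi \geq 0\}$ becomes $\{\sum_{j=1}^{d_\phi}(d_j+1) \geq d_\phi^2\}$, i.e.\ $\{\sum_{j=1}^{d_\phi} d_j \geq d_\phi(d_\phi-1)\}$. I would condition on $A_\phi$, on $\Gamma_\phi \sim {\rm Gamma}(1+\delta,1)$, and on $d_\phi$, and then express $\mu([0,\infty))$ as an integral over these, with the $d_j$'s conditionally independent given the types of the neighbours.

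The key structural observation driving the $\delta\downarrow -1$ limit is that $\Gamma_\phi \sim {\rm Gamma}(1+\delta,1)$ concentrates at $0$ as $\delta \downarrow -1$, so the intensity $\rho_\phi$ of the $\mathsf y$-children vanishes and $\P\{Y_\phi = 0\}\to 1$; thus $\P\{d_\phi = 1\}\to 1$. On the event $\{d_\phi=1\}$ the root has a single neighbour (its $\mathsf o$-child $u_1$), and $\Delta_\phi = d_1 + 1 - 1 = d_1 \geq 0$ automatically. Hence $\mu([0,\infty)) \geq \P\{d_\phi=1\} \to 1$, giving the second claim. For the first claim, $\mu([0,\infty))\geq \tfrac12$, I would argue that the dominant contribution always comes from low-degree roots: split according to $d_\phi$, note $\P\{\Delta_\phi\geq 0 \mid d_\phi=1\}=1$ and $\P\{\Delta_\phi\geq 0\mid d_\phi=2\} = \P\{d_1+d_2\geq 2\}$, and so on, and show the deficit from large $d_\phi$ is controlled by the smallness of $\P\{d_\phi=k\}$ for large $k$. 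The cleanest route is probably the same lower bound used elsewhere in the paper: $\mu([0,\infty)) \geq \sum_{k}\P\{d_\phi=k\}\,\P\{d_1\geq k(k-1)\mid d_\phi=k,\ldots\}$ after dropping all but one summand in $\sum_j d_j \geq d_\phi(d_\phi-1)$ — but here, because the $\mathsf o$-child's offspring law is heavy-tailed (power-law exponent $2+\delta\in(1,2]$), even this single-neighbour bound may not suffice for all $\delta$, so one instead keeps the full sum and uses that conditionally the $d_j$ have means that are not small.

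The main obstacle I anticipate is the dependence structure in the P\'olya point tree: unlike the configuration model or the Galton–Watson cases, the offspring counts $d_j$ of the root's neighbours are \emph{not} i.i.d.\ — the $\mathsf o$-child $u_1$ has age $A_{u_1}=U^{(2+\delta)/(1+\delta)}A_\phi$ stochastically smaller than $A_\phi$ and a size-biased $\Gamma_{u_1}\sim{\rm Gamma}(2+\delta,1)$, hence tends to have \emph{more} offspring (it is the "rich" older vertex), whereas the $\mathsf y$-children have ages in $[A_\phi,1]$ and the lighter ${\rm Gamma}(1+\delta,1)$ weight. Quantifying $\P\{\Delta_\phi\geq 0\}$ therefore requires carefully integrating out $A_\phi$ and the $\Gamma$'s against the Poisson intensity, and checking that the asymmetry between the older and younger neighbours works \emph{in favour} of the paradox (the older neighbour's extra offspring help the inequality $\sum_j d_j \geq d_\phi(d_\phi-1)$). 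I would handle this by first doing the $d_\phi=1$ and $d_\phi=2$ cases by hand to see the mechanism, then bounding the tail contribution $\sum_{k\geq 3}\P\{d_\phi=k\}$ crudely and showing it is always small enough (for every $\delta\geq-1$) that the remaining mass on $\{d_\phi\in\{1,2\}\}$ already exceeds $\tfrac12$ — or, if that fails near $\delta=0$, invoking the explicit ${\rm Gamma}$/Poisson mixing to push the estimate through. Monotonicity in $\delta$ is not claimed, so a uniform-in-$\delta$ crude bound plus the exact $\delta\downarrow-1$ asymptotics is all that is needed.
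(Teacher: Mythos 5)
You have the right seed of the argument --- on $\{d_\phi=1\}$ one has $\Delta_\phi=d_1\geq 0$, so $\mu([0,\infty))\geq\P_{\bar\mu}\{d_\phi=1\}$, and the $\delta\downarrow-1$ limit follows because $\Gamma_\phi\sim\mathrm{Gamma}(1+\delta,1)$ degenerates to $0$ --- but you stop short of the one computation that makes the theorem, and instead sketch a considerably heavier plan (splitting over $d_\phi\in\{1,2\}$, bounding $\sum_{k\geq 3}\P\{d_\phi=k\}$, worrying about the dependence between the $\mathsf{o}$- and $\mathsf{y}$-neighbours and the heavy tail of the $\mathsf{o}$-child) while explicitly conceding it "may fail near $\delta=0$" and gesturing at an unspecified fix. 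The missing step is elementary and decisive: condition on $A_\phi=a$, integrate the Poisson$(\Gamma_\phi\kappa(a))$ law over $\Gamma_\phi\sim\mathrm{Gamma}(1+\delta,1)$ to get
\[
\P_{\bar\mu}\{d_\phi=k\mid A_\phi=a\}
=\frac{\Gamma(k+\delta)}{(k-1)!\,\Gamma(1+\delta)}\bigl(1-a^{\frac{1}{2+\delta}}\bigr)^{k-1}a^{\frac{1+\delta}{2+\delta}},
\]
then integrate over the uniform age to obtain
$\P_{\bar\mu}\{d_\phi=k\}=\frac{(2+\delta)\Gamma(3+2\delta)\Gamma(k+\delta)}{\Gamma(1+\delta)\Gamma(k+3+2\delta)}$,
whence $\P_{\bar\mu}\{d_\phi=1\}=\frac{2+\delta}{3+2\delta}$. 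This is a decreasing function of $\delta$ that tends to $\tfrac12$ only as $\delta\to\infty$ and to $1$ as $\delta\downarrow-1$, so the single bound $\mu([0,\infty))\geq\P_{\bar\mu}\{d_\phi=1\}>\tfrac12$ already proves both assertions uniformly in $\delta\in(-1,\infty)$; this is exactly the paper's proof.

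In particular, your concern that "even this single-neighbour bound may not suffice for all $\delta$" is misplaced: for this theorem you never need any information about $d_1$ beyond $d_1\geq 0$, so the dependence structure of the P\'olya point tree, the size-biased $\mathrm{Gamma}(2+\delta,1)$ weight of the $\mathsf{o}$-child, the $d_\phi=2$ analysis, and the tail control on $d_\phi$ are all unnecessary. Without the explicit evaluation of $\P_{\bar\mu}\{d_\phi=1\}$ (or some equivalent uniform-in-$\delta$ lower bound on it), your proposal does not establish $\mu([0,\infty))\geq\tfrac12$; with it, everything else in your plan can be discarded. The $\delta\downarrow-1$ part of your argument is fine in spirit (bounded convergence gives $\E[\ee^{-\Gamma_\phi\kappa(A_\phi)}]\to1$), but it too follows at once from the closed-form $\frac{2+\delta}{3+2\delta}\to1$.
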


\begin{theorem}
\label{thm2PAM}
Abbreviate $p_{\delta} = \E_{\bar{\mu}}[d_{\phi}^{-1}] = \sum_{k\in\N} \frac{(2+\delta)\Gamma(3+2\delta)\Gamma(k+\delta)}{k\Gamma(1+\delta)\Gamma(k+3+2\delta)}$. Then
\begin{align*}
\E_{\bar{\mu}}\big[\Delta_{\phi}\big]
&\left\{
\begin{array}{ll}
\in \tfrac{2+\delta}{\delta}(\frac{1}{2}+p_{\delta}) + \big[-(1-p_{\delta}),0\big], 
&\text{if } \delta\in (0,\infty), \\
=\infty, 
&\text{if } \delta\in (-1,0],
\end{array} \right.
\end{align*}
and $\E_{\bar{\mu}}[\Delta_{\phi}^{2}]<\infty$ if and only if $ \delta \in (1,\infty)$. 
\end{theorem}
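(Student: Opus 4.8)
The plan is to work entirely inside the P\'olya point tree, since by Theorem \ref{thm1} the law $\mu$ of $\Delta_\phi$ is determined by the local limit. Recall from \eqref{Dphidef} that $\Delta_\phi = \big(\tfrac{1}{d_\phi}\sum_{j=1}^{d_\phi}(d_j+1) - d_\phi\big)\mathbbm{1}_{\{d_\phi\neq 0\}}$, where $d_\phi$ is the number of children of the root and $d_j$ the number of children of child $j$. In the P\'olya point tree the root $\phi$ always has exactly one offspring of label $\mathsf{o}$ (by rule (1), since $u=\phi$), plus a random number $\ny(\phi)$ of offspring of label $\mathsf{y}$; hence $d_\phi = 1 + \ny(\phi) \geq 1$, so the indicator is always $1$ and no isolated-vertex issue arises. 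Write $d_\phi = 1+Y$ with $Y = \ny(\phi)$, and split the sum $\sum_{j=1}^{d_\phi}(d_j+1)$ according to the label of child $j$. The key structural facts I will use are: (i) conditionally on $A_\phi = a$, the variable $\ny(\phi)$ is Poisson with a random mean $\Gamma_\phi \int_a^1 \rho_\phi(x)\,\dd x$ with $\Gamma_\phi \sim \mathrm{Gamma}(1+\delta,1)$; (ii) each child $u_j$ of label $\mathsf{y}$ is itself a label-$\mathsf{y}$ vertex, so its offspring count $d_j = 1 + \ny(u_j)$ (it has one $\mathsf{o}$-child and a random number of $\mathsf{y}$-children), with $\Gamma_{u_j}\sim\mathrm{Gamma}(1+\delta,1)$; (iii) the unique $\mathsf{o}$-child $u_1$ of $\phi$ has $d_1 = 1+\ny(u_1)$ with $\Gamma_{u_1}\sim\mathrm{Gamma}(2+\delta,1)$. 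All offspring intensities are explicit, so every conditional expectation reduces to computing $\E[\Gamma_u]$ and integrals of $\rho_u$ against powers of the age variables.

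For $\E_{\bar\mu}[\Delta_\phi]$ I will expand
\[
\Delta_\phi = \frac{1}{1+Y}\sum_{j=1}^{1+Y}(d_j+1) - (1+Y),
\]
condition on the ages and the $\Gamma$'s along the first two generations, and integrate. The term $-(1+Y)$ contributes $-(1+\E[Y])$, which is finite for all $\delta>-1$ (this is the displayed degree-tail computation available from \cite{RvdH1}). The delicate term is $\E\big[\tfrac{1}{1+Y}\sum_j (d_j+1)\big]$; here I will use that, conditionally on $A_\phi$ and $\Gamma_\phi$, the label-$\mathsf{y}$ children's positions form a Poisson process, so by the Palm formula the conditional expectation of $\sum_{j:\,L_{u_j}=\mathsf y}(d_{u_j}+1)$ equals $\int$ (intensity)$\times\E[d_u+1\mid A_u=x]\,\dd x$, and $\E[d_u+1\mid A_u=x]$ is an explicit function of $x$ (a power of $x$ times a constant involving $\Gamma(1+\delta)$-ratios). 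The factor $\tfrac{1}{1+Y}$ couples to $Y$, so I cannot simply multiply expectations; instead I will use the classical identity $\E[\tfrac{1}{1+Y}] = \int_0^1 \E[s^Y]\,\dd s$ together with the Poisson thinning identity $\E[\tfrac{1}{1+Y}\sum_{j=1}^{Y} g(\text{child }j)]$ expressed via size-biasing of the Poisson law, which turns it into a tractable one-dimensional integral over $A_\phi$. The $\mathsf{o}$-child always being present adds the separate, explicit term $\E[\tfrac{1}{1+Y}(d_1+1)]$. Carrying out these integrals yields a closed form; the stated result writes the answer as $\tfrac{2+\delta}{\delta}\big(\tfrac12+p_\delta\big)$ up to an additive correction lying in $[-(1-p_\delta),0]$, the interval arising precisely from the bounds $0 \le \E[\tfrac{1}{1+Y}] \le 1$ applied to one residual term that cannot be evaluated in elementary closed form. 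For $\delta \in (-1,0]$ the divergence $\E_{\bar\mu}[\Delta_\phi]=\infty$ comes from the term $\E[Y]$ (equivalently $\E_{\bar\mu}[d_\phi]$), which is infinite exactly on this range because the size-biased/offspring intensity $\rho_\phi$ makes $\int_0^1 a^{-1/(2+\delta)}\,\dd a$-type quantities diverge; I will isolate this term and bound the remaining (positive) terms from below to conclude the sum is $+\infty$.

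For the second-moment claim I will show $\E_{\bar\mu}[\Delta_\phi^2]<\infty \iff \delta\in(1,\infty)$ by a two-sided estimate. Squaring $\Delta_\phi$ produces, among lower-order terms, the dominant contribution $\big(\tfrac{1}{1+Y}\sum_j d_j\big)^2 \asymp \tfrac{1}{(1+Y)^2}\sum_{j,k} d_j d_k$; the diagonal part $\tfrac{1}{(1+Y)^2}\sum_j d_j^2$ forces us to control $\E[d_j^2\mid A_{u_j}=x]$, and since $d_j = 1+\ny(u_j)$ is conditionally Poisson, $\E[d_j^2\mid A_{u_j}=x]$ is a polynomial in the conditional mean, which behaves like a power $x^{-\theta(\delta)}$ near $x=0$; integrating against the child intensity $\rho_\phi(x)\asymp x^{-(1+\delta)/(2+\delta)}$ near $0$ gives a finite integral precisely when the combined exponent is $>-1$, and tracking the exponent shows this threshold is $\delta>1$. (Equivalently, this is the statement that $\E_{\bar\mu}[d_\phi^2]$ together with $\E_{\bar\mu}[(\sum_j d_j)^2/(1+d_\phi)^2]$ is finite iff the third moment of the limiting degree is finite, and the P\'olya point tree has power-law exponent $3+\delta$, so a third moment exists iff $3+\delta>4$, i.e. $\delta>1$.) For the matching lower bound I restrict to the event $\{Y=0\}$ (which has positive probability for every $\delta>-1$), on which $\Delta_\phi = d_1+1-1 = d_1$ with $d_1 = 1+\ny(u_1)$ and $\Gamma_{u_1}\sim\mathrm{Gamma}(2+\delta,1)$; then $\E_{\bar\mu}[\Delta_\phi^2] \geq \E_{\bar\mu}[d_1^2\,\mathbbm{1}_{\{Y=0\}}]$, and the same exponent-counting shows this is already infinite for $\delta\le 1$, completing the equivalence.

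\medskip\noindent
\textbf{Main obstacle.} The hard part will be the $\E_{\bar\mu}[\Delta_\phi]$ computation for $\delta>0$: the factor $\tfrac{1}{1+\ny(\phi)}$ is negatively correlated with the $\mathsf{y}$-children's contribution $\sum_{j}(d_{u_j}+1)$ through the shared randomness of $A_\phi$ and $\Gamma_\phi$, so one genuinely has to carry out the joint integral over the root's age rather than factorizing. I expect this to be where the non-closed-form residual term enters, which is why the theorem states an interval $\big[-(1-p_\delta),0\big]$ rather than an exact value; pinning down that this residual is exactly of the form $-(1-p_\delta)\E[\tfrac{1}{d_\phi}\mid\cdot]$ (hence lies in the claimed interval) is the crux of the argument.
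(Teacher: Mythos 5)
Your overall framework (work in the P\'olya point tree, condition on ages and $\Gamma$'s, reduce everything to explicit Beta/Gamma integrals) is the same as the paper's, and your lower bound for the second moment via the event $\{d_\phi=1\}$, where $\Delta_\phi=d_1$ and the old child's $U_{\phi_1}^{-2/(1+\delta)}$ moment blows up for $\delta\le 1$, is essentially the paper's mechanism. But there is a genuine error in your treatment of the case $\delta\in(-1,0]$. You attribute the divergence $\E_{\bar\mu}[\Delta_\phi]=\infty$ to $\E[Y]=\E_{\bar\mu}[\ny(\phi)]$ (equivalently $\E_{\bar\mu}[d_\phi]$) being infinite on that range, claiming that $\int_0^1 a^{-1/(2+\delta)}\,\dd a$-type quantities diverge. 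This is false: $1/(2+\delta)<1$ for all $\delta>-1$, so that integral converges, and indeed $\E_{\bar\mu}[d_\phi]=1+\E[\Gamma_\phi]\E[\kappa(A_\phi)]=2$ for every $\delta>-1$ (this is computed explicitly in the paper). Moreover, even if $\E[Y]$ were infinite it enters $\Delta_\phi$ with a negative sign, so your plan of "isolating this term and bounding the remaining positive terms from below" could not yield $+\infty$. The true source of divergence is the \emph{old} child $\phi_1$: its age is $A_{\phi_1}=U_{\phi_1}^{(2+\delta)/(1+\delta)}A_\phi$, so $\E_{\bar\mu}[\kappa(A_{\phi_1})\mid d_\phi=k]$ contains the factor $\E[U_{\phi_1}^{-1/(1+\delta)}]$, which is infinite exactly for $\delta\le 0$; hence $\E_{\bar\mu}[d_1\mid d_\phi=k]=\infty$ there, and this (not the root degree) forces $\E_{\bar\mu}[\Delta_\phi]=\infty$. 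Your proposal never identifies this mechanism, so the $\delta\in(-1,0]$ part of the first-moment claim is not proved by your route.

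A second, structural error: you assert in (ii) that a label-$\mathsf y$ child has one $\mathsf o$-child, so $d_j=1+\ny(u_j)$. By the construction, only the root and $\mathsf o$-labelled vertices have an $\mathsf o$-child; a $\mathsf y$-labelled child of $\phi$ has $d_j=\ny(u_j)$. This off-by-one is not cosmetic: the interval $[-(1-p_\delta),0]$ in the statement comes precisely from the two-sided bound $0\le\E_{\bar\mu}[\sum_{j\ge 2}d_j\mid d_\phi=k]\le k-1$ for the young children (obtained by representing their ages, given $d_\phi=k$, as $k-1$ i.i.d.\ uniforms on $[A_\phi,1]$ and bounding $\E[\tilde U_1^{-1/(2+\delta)}\mid d_\phi=k]\le\frac{2+\delta}{1+\delta}$), not from bounding $\E[1/(1+Y)]$ as you suggest; with your extra $+1$ per young child the correction interval would shift to $[0,1-p_\delta]$ and you would not recover the stated result. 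Similarly, in the second-moment upper bound the threshold $\delta>1$ does not come from integrating $\E[d_j^2\mid A_{u_j}=x]$ against the young-child intensity $\rho_\phi$ (that contribution is finite for all $\delta>0$); it again comes from the old child via $\E[U_{\phi_1}^{-2/(1+\delta)}]$, which your finiteness argument would need to treat separately, together with control of the sum over $d_\phi=k$.
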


\begin{theorem}
\label{thm3PAM}
For every $\delta > -1$,
\begin{align*}
\P_{\bar{\mu}}\{\Delta_{\phi}\geq x\} \asymp \P_{\bar{\mu}}\big\{d_{1}\geq x\big\}, \qquad x \to \infty.
\end{align*} 
In particular,
\begin{equation*}
\P_{\bar{\mu}}\{\Delta_{\phi}\geq x\} \asymp x^{-(\tau -2)}, \qquad x \to \infty,
\end{equation*}
with $\tau=\tau(\delta) =3+\delta$.
\end{theorem}

Figure \ref{fig5} shows a numerical plot of a lower bound on $\delta\mapsto\mu([0,\infty))$ for $(\PAM_{n}^{(1,\delta)})_{n\in\N}$,
given by
\begin{align*}
\sum_{k=1}^{50} \P_{\bar{\mu}}\big\{d_{\phi}= k\big\}\P_{\bar{\mu}}\big\{d_{1}\geq k(k-1)\mid d_{\phi}= k\big\},
\end{align*}
in which we have used Monte Carlo integration to estimate $\P_{\bar{\mu}}\{d_{1}\geq k(k-1)\mid d_{\phi}= k\}$. Fig.~\ref{fig5} shows that $\mu([0,\infty))$ may not be a monotone function of $\delta$, and is always greater than $0.99$. In addition to confirming that $\lim_{\delta \downarrow -1} \mu([0,\infty)) = 1$, the figure leads us to \emph{conjecture} that $\lim_{\delta \to \infty} \mu([0,\infty)) = 1$. Since the preferential attachment model captures self-reinforcement in friendship networks, these strong forms of the friendship paradox are perhaps plausible. 

\begin{figure}[htbp]
\centering
\includegraphics[scale=0.4]{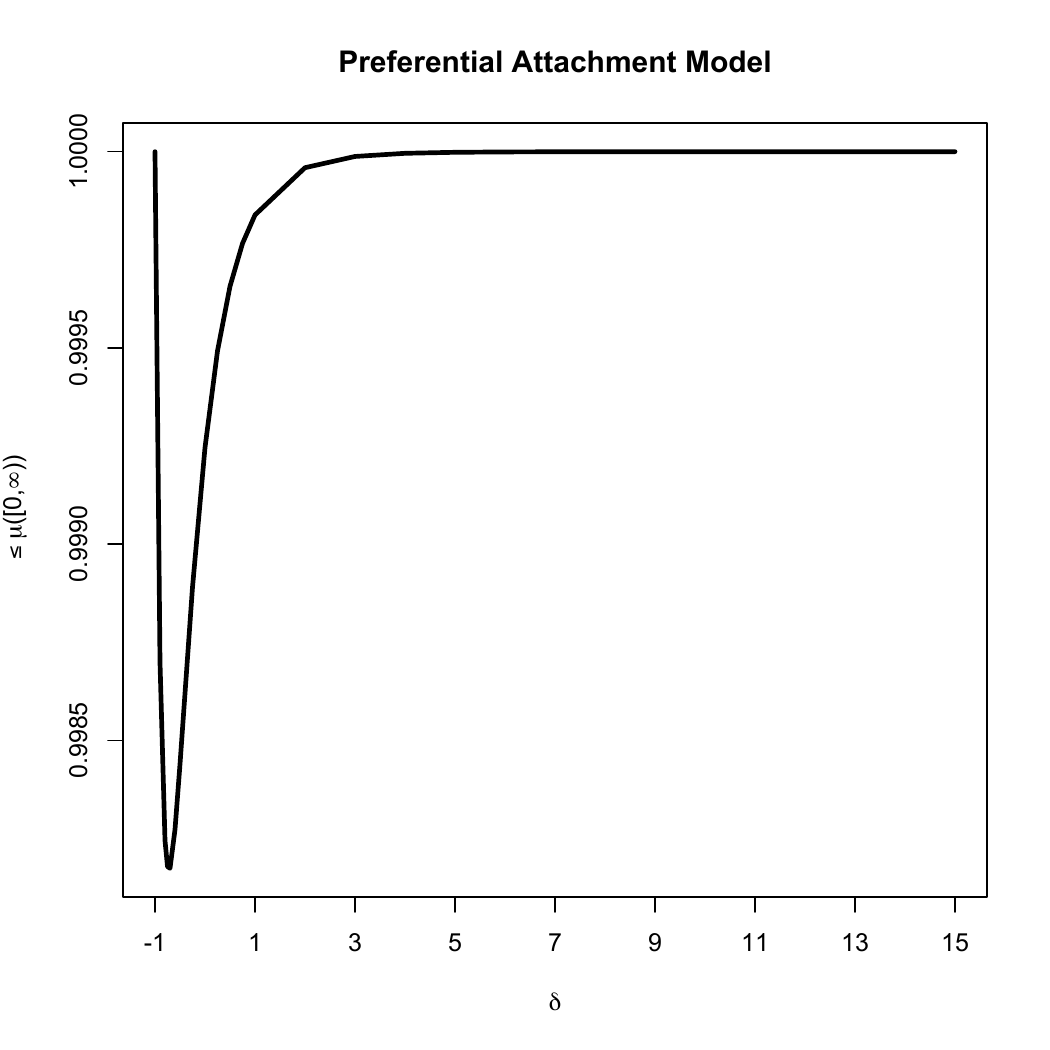}
\caption{\label{fig5} \textsl{Numerical plot of a lower bound on $\delta\mapsto \mu([0,\infty))$ for $(\PAM_{n}^{(1,\delta)})_{n\in\N}$.}}
\end{figure}

\paragraph{Ideas behind the proofs.} 
Theorem \ref{thm1PAM} follows from the inequality $\mu([0,\infty))\geq \P_{\bar{\mu}}\{d_{\phi}=1\}=\frac{2+\delta}{3+2\delta}$. Let $d_{i}$ denote the size of the offspring of $\phi_{i} = \phi i$. The first moment in Theorem \ref{thm2PAM} is obtained by noting that $\E_{\bar{\mu}}[d_{\phi}]=2$, that, for $k\in\N$, the conditional expectation $\E_{\bar{\mu}}[d_{1}\mid d_{\phi}=k]$ equals $1+\tfrac{2+\delta}{2\delta}(k+2)$ for $\delta>0$ and $\infty$ otherwise, and that $0 \leq \E_{\bar{\mu}}[\sum_{j=2}^{k} d_{j}\mid d_{\phi}=k] \leq k-1$ when $k\geq 2$. The main point about the second moment in Theorem \ref{thm2PAM} is that, for any $k\in\N$, the conditional expectation $\E_{\bar{\mu}}[d_{1}^{2}\mid d_{\phi}=k]$ is finite if and only if $\delta>1$. The lower asymptotic bound in Theorem \ref{thm3PAM} is derived by showing that, for $k$ and $x$ large enough,
\begin{align*}
&\P_{\bar{\mu}}\big\{d_{1}\geq x\mid d_{\phi}=k\big\}
\geq \gamma_{\delta}\, x^{-(1+\delta)}, \qquad
\P_{\bar{\mu}}\big\{d_{1}\geq k^{2}(x+1)\mid d_{\phi}=k\big\}
\ge \gamma_{\delta}\, (kx)^{-(1+\delta)}, 
\end{align*}
where $\gamma_{\delta}=\frac{2^{-(1+\delta)}}{\Gamma(3+\delta)\Gamma(4+2\delta)}$. The proof of the upper asymptotic bound  in Theorem \ref{thm3PAM} requires more intricate bounds. Namely, it can be shown that, for $k\in\N$ and $x$ large enough, 
\begin{align*}
\P_{\bar{\mu}}\big\{d_{1}\geq x\mid d_{\phi}=k\big\} \leq \gamma_{1,k,\delta}\,(x-1)^{-(1+\delta)},
\end{align*}
and, for $k\geq 2$,
\begin{align*}
\P_{\bar{\mu}}\Big\{\sum_{i=2}^{k}d_{i}\geq (k-1) x \mid d_{\phi}=k\Big\}
\leq \gamma_{2,k,\delta}\,x^{-(1+\delta)},
\end{align*}
where
\[
\gamma_{1,k,\delta}=\frac{\Gamma(k+3+2\delta)}{\Gamma(k+2+\delta)},
\qquad \gamma_{2,k,\delta}= \frac{\Gamma(k+3+2\delta)}{\Gamma(1+\delta)\Gamma(k)}2^{-k}.
\]
Because $\E_{\bar{\mu}}[\gamma_{1,d_{\phi},\delta}]<\infty$, the theorem follows from these upper bounds. For more details we refer to Section \ref{sec:proofs2}.


\section{Four classes of sparse random graphs: proof of the main theorems}
\label{sec:proofs2}
In this section we provide the full proofs of all the results discussed in Section \ref{sec:ex}.

\subsection{Homogeneous Erd\H{o}s-R\'{e}nyi random graph}


\paragraph{Proof of Theorem \ref{thm1HER}.}

Since $\nu = \mathrm{Poisson}(\lambda)$, it follows from \eqref{mu} that
\begin{align*}
&\mu([0,\infty))
= \nu(0) + \sum_{k\in\N} \nu(k)\nu^{\circledast k}\big([k(k-1),\infty)\big)\\
&= \ee^{-\lambda} + \sum_{k\in\N} \frac{\ee^{-\lambda}\lambda^{k}}{k!} 
\sum_{l\geq k(k-1)} \dfrac{\ee ^{-\lambda k}(\lambda k)^{l}}{l!}
= \sum_{k\in\N_{0}} \frac{\ee^{-\lambda}\lambda^{k}}{k!} \sum_{l\geq k(k-1)} 
\dfrac{\ee ^{-\lambda k}(\lambda k)^{l}}{l!}.
\end{align*}
Since
\begin{align*}
\mu([0,\infty)) \geq \ee^{-\lambda} + \lambda\ee^{-\lambda},
\end{align*}
it follows that $\lim_{\lambda\downarrow 0} \mu([0,\infty)) = 1$. 

We now show that $\lim_{\lambda\to\infty} \mu([0,\infty)) = 1/2$. Fix $\lambda>8$.  Define
\begin{align*}
Z_{\lambda}=\dfrac{d_{\phi}-\lambda}{\sqrt{\lambda}}, \qquad
Z_{\lambda}^{\prime}=\dfrac{\sum_{j=1}^{d_{\phi}}d_{j}-\lambda d_{\phi}}{\sqrt{\lambda d_{\phi}}}.
\end{align*}
Rewriting $\Delta_{\phi}$ in terms of $Z_{\lambda}$ and $Z_{\lambda}^\prime$ we have
\begin{align}
\label{az1}
\mu([0,\infty))
&=\ee^{-\lambda}+\P_{\bar{\mu}}\bigg\{\sum_{j=1}^{d_{\phi}}d_{j}\geq d_{\phi}(d_{\phi}-1),d_{\phi}\geq 1\bigg\}\nonumber\\
&=\ee^{-\lambda}+\P_{\bar{\mu}}\bigg\{Z_{\lambda}^{\prime}\sqrt{1+\tfrac{1}{\sqrt{\lambda}}
Z_{\lambda}}\geq \sqrt{\lambda}Z_{\lambda}\big(1-\tfrac{1}{\lambda}\big) -1+ Z_{\lambda}^{2}, \,\, d_{\phi}\geq 1\bigg\}.
\end{align}
We note that $\P_{\bar{\mu}}\{Z_{\lambda}\geq -\sqrt{\lambda}\}=1$ because $d_{\phi}$ is a non-negative random variable. Define the event $A_{\lambda}=\{|Z_{\lambda}|\leq \log\lambda,d_{\phi}\geq 1\}$. Since, by the continuous mapping theorem and central limit theorem, $\tfrac{|Z_{\lambda}|}{\log\lambda}\to 0$ in probability as $\lambda\to\infty$, we have
\begin{align}
\label{az2}
\P_{\bar{\mu}}\big\{A_{\lambda}^{c}\big\}\to 0, \qquad \lambda\to\infty.
\end{align}
On the other hand, 
\begin{align}
\label{az3}
&\P_{\bar{\mu}}\bigg\{ \Big\{ Z_{\lambda}^{\prime}\left(1+\tfrac{1}{\sqrt{\lambda}}Z_{\lambda}\right)^{1/2}
\geq \sqrt{\lambda}Z_{\lambda}\big(1-\tfrac{1}{\lambda}\big) -1+ Z_{\lambda}^{2}\Big\}\cap A_{\lambda} \bigg\}\nonumber\\
&=\P_{\bar{\mu}}\bigg\{ \Big\{ Z_{\lambda}^{\prime}\left(1+O(\tfrac{\log\lambda}{\sqrt{\lambda}})\right)^{1/2}
\geq \sqrt{\lambda} Z_{\lambda}\big(1-\tfrac{1}{\lambda}\big)-1+O(\log^{2}\lambda)\Big\}\cap A_{\lambda} \bigg\}.
\end{align}
Now define
\begin{align*}
B_{1,\lambda} = \Big\{Z_{\lambda}< -\frac{1}{\log\lambda}\Big\}, \quad
B_{2,\lambda} = \Big\{Z_{\lambda}> \frac{1}{\log\lambda}\Big\}, \quad
B_{3,\lambda} = \Big\{|Z_{\lambda}|\leq \frac{1}{\log\lambda}\Big\}.
\end{align*}
By the continuous mapping theorem and the central limit theorem, there exists a standard Gaussian random variable $N$ defined on a probability space $(\Omega ,\mathcal{F},\P)$ such that $Z_{\lambda}+\tfrac{1}{\log\lambda}\Rightarrow N$, $Z_{\lambda}-\tfrac{1}{\log\lambda}\Rightarrow N$ and $|Z_{\lambda}|-\tfrac{1}{\log\lambda}\Rightarrow |N|$ as $\lambda\rightarrow\infty$. Therefore as $\lambda\to\infty$
\begin{align}
&\P_{\bar{\mu}}\big\{B_{1,\lambda}\big\} \to \P\big\{N<0 \big\} =\tfrac{1}{2},\label{az4-1}\\
&\P_{\bar{\mu}}\big\{B_{2,\lambda}\big\} \to  \P\big\{N>0 \big\} =\tfrac{1}{2},  \nonumber\\
&\P_{\bar{\mu}}\big\{B_{3,\lambda}\big\} \to  \P\big\{|N|\leq0 \big\} =0.
\label{az4-2}
\end{align}
Again, using the continuous mapping theorem and the central limit theorem, we have
\begin{align*}
\tilde Z_{\lambda}\,\frac{\log\lambda}{\sqrt{\lambda}}
+O\big(\tfrac{\log^{3}\lambda}{\sqrt{\lambda}}\big) \Longrightarrow 0, \qquad \lambda\to\infty,
\end{align*}
where we abbreviate $\tilde Z_{\lambda} = Z_{\lambda}^{\prime}(1+O(\tfrac{\log\lambda}{\sqrt{\lambda}}))^{\tfrac{1}{2}}$.
Note that $g(\lambda)=\tfrac{\log\lambda}{\sqrt{\lambda}}$ is a strictly decreasing function on the interval $(8,\infty)$, which implies that $\tfrac{\log\lambda}{\sqrt{\lambda}}<\tfrac{\log 8}{\sqrt{8}}<\tfrac{7}{8}$. Hence
\begin{align}
\label{az5}
&\P_{\bar{\mu}}\bigg\{ \Big\{ \tilde Z_{\lambda}
\geq \sqrt{\lambda} Z_{\lambda}\big(1-\tfrac{1}{\lambda}\big)-1+O(\log^{2}\lambda)\Big\}
\cap B_{2,\lambda} \bigg\}\nonumber\\
&\leq \P_{\bar{\mu}}\bigg\{\tilde Z_{\lambda}\,\frac{\log\lambda}{\sqrt{\lambda}}
+O\big(\tfrac{\log^{3}\lambda}{\sqrt{\lambda}}\big)>\tfrac{7}{8}-\tfrac{\log 8}{\sqrt{8}}\bigg\}
\to 0, \qquad \lambda\to\infty.
\end{align}
Moreover, 
\begin{align}
\label{az6}
&\P_{\bar{\mu}}\bigg\{ \Big\{\tilde Z_{\lambda}
\geq \sqrt{\lambda} Z_{\lambda}\big(1-\tfrac{1}{\lambda}\big)-1+O(\log^{2}\lambda)\Big\}
\cap A_{\lambda} \cap B_{1,\lambda}\bigg\}\nonumber\\
&= \P_{\bar{\mu}}\big\{A_{\lambda} \cap B_{1,\lambda} \big\}
-\P_{\bar{\mu}}\bigg\{ \Big\{\tilde Z_{\lambda}
< \sqrt{\lambda} Z_{\lambda}\big(1-\tfrac{1}{\lambda}\big)-1+O(\log^{2}\lambda)\Big\}
\cap A_{\lambda} \cap B_{1,\lambda} \bigg\}\nonumber\\
&\to \tfrac{1}{2},\qquad \lambda\to\infty,
\end{align}
because it follows from \eqref{az2} and \eqref{az4-1} that
\begin{align*}
\P_{\bar{\mu}}\big\{A_{\lambda} \cap B_{1,\lambda} \big\}
=\P_{\bar{\mu}}\big\{B_{1,\lambda} \big\}-\P_{\bar{\mu}}\big\{A^{c}_{\lambda} \cap B_{1,\lambda} \big\}
\to \tfrac{1}{2}, \qquad \lambda\to\infty,
\end{align*}
and
\begin{align*}
&\P_{\bar{\mu}}\bigg\{ \Big\{\tilde Z_{\lambda}
< \sqrt{\lambda} Z_{\lambda}\big(1-\tfrac{1}{\lambda}\big)-1+O(\log^{2}\lambda)\Big\}
\cap A_{\lambda} \cap B_{1,\lambda} \bigg\}\\
&\leq \P_{\bar{\mu}}\bigg\{\tilde Z_{\lambda}\,\tfrac{\log\lambda}{\sqrt{\lambda}}
+O\big(\tfrac{\log^{3}\lambda}{\sqrt{\lambda}}\big)<-\tfrac{7}{8}\bigg\}\to 0, \qquad \lambda\to\infty.
\end{align*}
Equations \eqref{az1}--\eqref{az3} and \eqref{az4-2}--\eqref{az6} imply that $\lim_{\lambda\to \infty} \mu([0,\infty)) = \tfrac{1}{2}$. 


\paragraph{Proof of Theorem \ref{thm2HER}.}

Fix $\lambda \in (0,\infty)$. Since $(d_{j})_{j\in\N}$ are i.i.d.\ copies of $d_{\phi}$,
\begin{align*}
\E_{\bar{\mu}}\big[\Delta_{\phi}\big] 
&= \sum_{k\in\N}\nu(k)\E_{\bar{\mu}}\big[\Delta_{\phi}\mid d_{\phi}=k\big]
=\sum_{k\in\N}\nu(k)\E_{\bar{\mu}}\bigg[\frac{1}{k}\sum_{j=1}^{k}d_{j}+1-k\bigg]\\
&=\E_{\bar{\mu}}\big[d_{\phi}\big]\big(1-\nu(0)\big)+1-\nu(0)-\E_{\bar{\mu}}\big[d_{\phi}\big]
=1-(\lambda+1)\,\ee ^{-\lambda},
\end{align*}
which tends to $0$ as $\lambda\downarrow 0$ and $1$ as $\lambda\to\infty$. Similarly,
\begin{align*}
\E_{\bar{\mu}}\big[\Delta_{\phi}^{2}\big] 
&= \sum_{k\in\N}\nu(k)\E_{\bar{\mu}}\big[\Delta_{\phi}^{2}\mid d_{\phi}=k\big]
=\sum_{k\in\N}\nu(k)\E_{\bar{\mu}}\bigg[\Big(\frac{1}{k}\sum_{j=1}^{k}d_{j}+1-k\Big)^{2}\bigg]\\
&=\sum_{k\in\N}\nu(k)\E_{\bar{\mu}}\bigg[\frac{1}{k^{2}}\sum_{j=1}^{k}d_{j}^{2}
+\frac{1}{k^{2}}\sum_{i=1}^{k}\sum_{j=1 \atop {j\neq i}}^{k}d_{i}d_{j}+1+k^{2}
+\dfrac{2}{k}\sum_{j=1}^{k}d_{j}-2\sum_{j=1}^{k}d_{j}-2k\bigg]\\
&=\mathbb{V}\mathrm{ar}_{\bar{\mu}}\big(d_{\phi}\big)\E_{\bar{\mu}}\Big[d_{\phi}^{-1}
\mathbbm{1}_{\{d_{\phi}\neq 0\}}\Big]+\big(\E_{\bar{\mu}}\big[d_{\phi}\big]\big)^{2}\big(1-\nu(0)\big)+1-\nu(0)\\
&\quad\quad+\E_{\bar{\mu}}\big[d_{\phi}^{2}\big]+2\E_{\bar{\mu}}\big[d_{\phi}\big]\big(1-\nu(0)\big)
-2\big(\E_{\bar{\mu}}\big[d_{\phi}\big]\big)^{2}-2\E_{\bar{\mu}}\big[d_{\phi}\big]\\
&= \lambda \Big(\sum_{k\in\N}k^{-1}\dfrac{\ee^{-\lambda}\lambda^{k}}{k!}+1\Big)-(\lambda+1)^{2}\,\ee ^{-\lambda}+1.
\end{align*}
Since
\begin{align*}
\sum_{k\in\N}k^{-1}\dfrac{\ee^{-\lambda}\lambda^{k}}{k!}
=\E_{\bar{\mu}}\Big[d_{\phi}^{-1}\mathbbm{1}_{\{d_{\phi}\neq 0\}}\Big]\in [0,1],
\end{align*}
it follows that $\lim_{\lambda\downarrow 0} \E_{\bar{\mu}}[\Delta_{\phi}^{2}]=0$ and $\lim_{\lambda \to \infty} \E_{\bar{\mu}}[\Delta_{\phi}^{2}] = \infty$.


\paragraph{Proof of Theorem \ref{thm3HER}.}

Fix $\lambda \in (0,\infty)$. First, we show that
for each $k\in\N$,
\begin{align}
\label{tazi17}
\P_{\bar{\mu}}\Big\{\sum_{j=1}^{k} d_{j}\geq kx+k(k-1)\Big\}\sim\P_{\bar{\mu}}\Big\{\sum_{j=1}^{k}d_{j}= kx+k(k-1)\Big\},
\qquad x \to \infty.
\end{align}
Indeed, without loss of generality we may assume that $x\in\N$. Since, for each $k,l\in\N$, 
\begin{align*}
&\dfrac{\P_{\bar{\mu}}\big\{\sum_{j=1}^{k}d_{j}= kx+k(k-1)+l\big\}}
{\P_{\bar{\mu}}\big\{\sum_{j=1}^{k}d_{j}= kx+k(k-1)\big\}}\\
&=\dfrac{(\lambda k)^{l}}{(kx+k(k-1)+1)\times\cdots\times(kx+k(k-1)+l)} \to 0, \qquad x\to\infty,
\end{align*}
and
\begin{align*}
\sum_{l\in\N}\dfrac{\P_{\bar{\mu}}\big\{\sum_{j=1}^{k}d_{j}= kx+k(k-1)+l\big\}}
{\P_{\bar{\mu}}\big\{\sum_{j=1}^{k}d_{j}= kx+k(k-1)\big\}}
\leq\sum_{l\in\N}\dfrac{(\lambda k)^{l}}{l!}\leq\ee^{\lambda k}<\infty,
\end{align*}
\eqref{tazi17} follows by the dominated convergence theorem.

Next, we show that for $k\in\N\setminus\{1\}$,
\begin{equation}
\label{tazi18}
\P_{\bar{\mu}}\big\{\sum_{j=1}^{k}d_{j}= kx+k(k-1)\big\}=o\left(\P_{\bar{\mu}}\big\{d_1=x \big\}\right), \qquad x\to \infty.
\end{equation}
Indeed, for $k\in\N\setminus\{1\}$ we have, by Stirling's approximation,
\begin{align*}
&\dfrac{\P_{\bar{\mu}}\big\{\sum_{j=1}^{k}d_{j}= kx+k(k-1)\big\}}{\P_{\bar{\mu}}\big\{d_1=x \big\}}
=\dfrac{\P_{\bar{\mu}}\big\{\sum_{j=1}^{k}d_{j}= kx\big\}}{\P_{\bar{\mu}}\big\{d_1=x \big\}}
\dfrac{(\lambda k)^{k(k-1)}}{(kx+1)\times\cdots\times(kx+k(k-1))}\nonumber\\
&\sim\dfrac{\ee^{-\lambda(k-1)}}{\sqrt{k}}\left(\dfrac{x}{\lambda\ee}\right)^{-x(k-1)}
\dfrac{(\lambda k)^{k(k-1)}}{(kx+1)\times\cdots\times(kx+k(k-1))}
\to 0, \qquad x \to \infty.
\end{align*}
Observe that for $x\ge 1$,
\begin{align*}
\sum_{k\in\N}\dfrac{\P_{\bar{\mu}}\big\{d_{\phi}=k \big\}\P_{\bar{\mu}}
\big\{\sum_{j=1}^{k}d_{j}\geq kx+k(k-1)\big\}}{\P_{\bar{\mu}}\big\{d_1\geq x \big\}} \le 
\sum_{k\in\N} k\P_{\bar{\mu}}\big\{d_{\phi}=k \big\}= \E_{\bar{\mu}}\big[d_{\phi}\big] <\infty.
\end{align*}
Using \eqref{tazi17}, \eqref{tazi18} and the dominated convergence theorem, we have
\begin{align}
\label{forIHEproof}
&\lim_{x\to\infty}\dfrac{\P_{\bar{\mu}}\{\Delta_{\phi}\geq x\}}{\P_{\bar{\mu}}\big\{d_{\phi}=1 \big\} 
\P_{\bar{\mu}}\big\{d_{1}\geq x\big\}}\nonumber\\
&=\lim_{x\to\infty}\dfrac{\sum_{k\in\N}\P_{\bar{\mu}}\big\{d_{\phi}=k \big\}
\P_{\bar{\mu}}\big\{\sum_{j=1}^{k}d_{j}\geq kx+k(k-1)\big\}}{\P_{\bar{\mu}}\big\{d_{\phi}=1 \big\} 
\P_{\bar{\mu}}\big\{d_{1}\geq x\big\}}\nonumber\\
&=1+\sum_{k=2}^{\infty}\dfrac{\P_{\bar{\mu}}\big\{d_{\phi}=k \big\} }{\P_{\bar{\mu}}\big\{d_{\phi}=1 \big\}}
\lim_{x\to\infty}\dfrac{ \P_{\bar{\mu}}\big\{\sum_{j=1}^{k}d_{j}\geq kx+k(k-1)\big\}}
{\P_{\bar{\mu}}\big\{d_{1}\geq x\big\}}\nonumber\\
&=1+\sum_{k=2}^{\infty}\dfrac{\P_{\bar{\mu}}\big\{d_{\phi}=k \big\}}
{\P_{\bar{\mu}}\big\{d_{\phi}=1 \big\} }\lim_{x\to\infty}
\dfrac{\P_{\bar{\mu}}\big\{\sum_{j=1}^{k}d_{j}= kx+k(k-1)\big\}}{\P_{\bar{\mu}}\big\{d_{1}= x\big\}}
=1.
\end{align}
This, together with \eqref{tazi17} and Stirling's formula, yields
\begin{align*}
\P_{\bar{\mu}}\big\{\Delta_{\phi}\geq x\big\}
&\sim\P_{\bar{\mu}}\big\{d_{\phi}=1 \big\} \P_{\bar{\mu}}\big\{d_{1}\geq x\big\}
\sim\P_{\bar{\mu}}\big\{d_{\phi}=1 \big\} \P_{\bar{\mu}}\big\{d_{1}= x\big\}\\
&\sim \frac{1}{\sqrt{2\pi x}}\,\lambda\,\ee^{-2\lambda}\exp\Big\{-x\log\Big(\frac{x}{\lambda\ee}\Big)\Big\}, \qquad x \to \infty.
\end{align*}


\subsection{Inhomogeneous Erd\H{o}s-R\'{e}nyi random graph}


\paragraph{Proof of Theorem \ref{thm1IER}.}

(a) Let $c_{k}(z)$ be the probability that a random variable with a $\mathrm{Poisson}(\lambda \beta_{1} z)$-distribution takes the value $k$. If $\nu_{0}(k)$ denotes the probability that the root has $k$ children, then
\begin{align*}
\nu_{0}(k) = \int_{[0,1]} \dd x\, c_{k}(f(x)).
\end{align*}
Moreover, if $Q_{i}$ is the type of the $i$-th child of the root, then
\begin{align*}
\sum_{j=1}^{k} d_{j}~\big |~ (Q_{j})_{j=1}^k = (x_{j})_{j=1}^k \overset{d}=
\text{Poisson}\left(\lambda\beta_{1}\sum_{j=1}^{k} f(x_{j})\right),
\end{align*}
and if $\nu_{0}^{\circledast k}$ is the convolution of the probability distributions of $(d_{j})_{j=1}^k$, then
\begin{align*}
\nu_{0}^{\circledast k}\big([k(k-1),\infty)\big) 
= \beta_{1}^{-k}\prod_{j=1}^{k} \int_{[0,1]} \dd x_{j}\,f(x_{j}) \sum_{l\geq k(k-1)} c_{l}\left(\sum_{j=1}^{k} f(x_{j})\right).
\end{align*}
Hence
\begin{align*}
\mu([0,\infty)) = \nu_{0}(0) + \sum_{k\in\N}\nu_{0}(k)\nu_{0}^{\circledast k}\big([k(k-1),\infty)\big).
\end{align*}

\medskip\noindent
(b) The fact that $\mu([0,\infty))\ge\nu_{0}(0)\ge \ee^{-\lambda\beta_{1} M_{+}}$ implies $\lim_{\lambda\downarrow 0} \mu([0,\infty)) = 1$.

For large $\lambda$ we follow the strategy of using central limit theorem as in the proof of Theorem \ref{thm1HER}.  Let $\lambda>\max\{1,\lambda^{\prime}\}$ with $\lambda^{\prime}$ such that $\tfrac{\log\lambda}{\sqrt{\lambda}}<\frac{M_{-}}{2}$ for all $\lambda>\lambda^{\prime}$. For $x,y\in [0,1]$, define
\begin{align*}
Z_{\lambda}=Z_{\lambda}(x) = \dfrac{d_{\phi}-\lambda \beta_{1}f(x)}{\sqrt{\lambda \beta_{1}f(x)}}, \qquad
Z_{\lambda}^{\prime} = Z_{\lambda}^{\prime}(y) = \dfrac{\sum_{j=1}^{d_{\phi}}d_{j}
-\lambda \beta_{1}f(y)d_{\phi}}{\sqrt{\lambda \beta_{1}f(y)d_{\phi}}}.
\end{align*}
Also, put
\begin{align*}
&\varphi_{1}=\varphi_{1}(x,y)= \beta_{1}^{2}f(x)f(y),\\
&\varphi_{2}(\lambda)=\varphi_{2}(x,y,\lambda)
=\beta_{1}^{3/2}f^{1/2}(x)\big(2f(x)-f(y)\big)-\lambda^{-1}\beta_{1}^{1/2}f^{1/2}(x),\\
&\varphi_{3}(\lambda)=\varphi_{3}(x,y,\lambda)=\beta_{1}^{2}f(x)\big(f(x)-f(y)\big)-\lambda^{-1}\beta_{1}f(x),\\
&\varphi_{4}=\varphi_{4}(x)=\beta_{1}f(x).
\end{align*}
We have
\begin{align}
\label{tazi0}
&\mu([0,\infty))\nonumber\\
&=\nu_{0}(0) +\int_{[0,1]} \dd x \int_{[0,1]} \dd y\,\beta_{1}^{-1} f(y)\,
\P_{\bar{\mu}}\bigg\{ \sum_{j=1}^{d_{\phi}}d_{j}\geq d_{\phi}(d_{\phi}-1),d_{\phi} 
\geq 1~\Big |~ Q=x, Q^{\prime}=y \bigg\}
\end{align}
with
\begin{align}
\label{tazi00}
\nu_{0}(0)=\int_{[0,1]} \dd x\,\ee^{-\lambda\beta_{1}f(x)}
\leq \ee^{-\lambda M_{-}^{2}}\to 0, \qquad \lambda\to\infty.
\end{align}
Let $\P_{\bar{\mu}, x,y}\{\cdot\} = \P_{\bar{\mu}}\left\{\cdot\mid Q=x, Q^{\prime}=y\right\}$. Rewriting $\Delta_{\phi}$ in terms of $Z_{\lambda}$, $Z_{\lambda}^\prime$ and $\{\varphi_j\}_{j=1}^4$, we have
\begin{align}
\label{tazi1}
&\P_{\bar{\mu}, x, y}\bigg\{ \sum_{j=1}^{d_{\phi}}d_{j}\geq d_{\phi}(d_{\phi}-1),d_{\phi}
\geq 1~\bigg\}\nonumber\\
&=\P_{\bar{\mu}, x,y}\bigg\{  Z^{\prime}_{\lambda}\left( \varphi_{1}+O\big(\tfrac{1}{\sqrt{\lambda}}\big) 
Z_{\lambda}\right)^{1/2} \geq \sqrt{\lambda}\varphi_{2}(\lambda)Z_{\lambda}+\lambda\varphi_{3}(\lambda)
+\varphi_{4}Z_{\lambda}^{2},\, d_{\phi}\geq 1~\bigg\}.
\end{align}
Similarly to the proof of Theorem \ref{thm1HER}, taking the event $A_{\lambda}=\{|Z_{\lambda}|\leq \log\lambda,d_{\phi}\geq 1\}$, we have
\begin{align}
\label{tazi2}
\P_{\bar{\mu}}\big\{A_{\lambda}^{c}\mid Q=x\big\}\to 0, \qquad \lambda\to\infty .
\end{align}
Equations \eqref{tazi1}--\eqref{tazi2} imply that
\small
\begin{align}
\label{tazi3}
&\lim_{\lambda\to\infty}\P_{\bar{\mu}, x,y}\bigg\{ \sum_{j=1}^{d_{\phi}}d_{j}
\geq d_{\phi}(d_{\phi}-1),d_{\phi}\geq 1~ \bigg\}\nonumber\\
&=\lim_{\lambda\to\infty}\P_{\bar{\mu}, x, y}\bigg\{ \Big\{ Z^{\prime}_{\lambda}
\left(\varphi_{1}+O\big(\tfrac{1}{\sqrt{\lambda}}\big) Z_{\lambda}\right)^{1/2} \geq \sqrt{\lambda}\varphi_{2}(\lambda)Z_{\lambda}
+\lambda\varphi_{3}(\lambda)+\varphi_{4}Z_{\lambda}^{2}\Big\}\cap A_{\lambda}~\bigg\}\nonumber\\
&= \lim_{\lambda\to\infty}\P_{\bar{\mu},x,y}\bigg\{\Big\{Z^{\prime}_{\lambda}\left(\varphi_{1}
+O\big(\tfrac{\log\lambda}{\sqrt{\lambda}}\big) \right)^{1/2}+O\big(\log^{2}\lambda\big)
\geq \sqrt{\lambda}\varphi_{2}(\lambda)Z_{\lambda}+\lambda\varphi_{3}(\lambda)\Big\}
\cap A_{\lambda}~\bigg\}.
\end{align}
\normalsize
We investigate \eqref{tazi3} for the following three sets:
\begin{align*}
&I_{f} = \{(x,y)\in[0,1]\times[0,1]\colon\,f(x)-f(y)> 0\},\\
&A_{f} = \{(x,y)\in[0,1]\times[0,1]\colon\,f(x)-f(y)< 0\},\\
&B_{f} = \{(x,y)\in[0,1]\times[0,1]\colon\,f(x)-f(y)= 0\}.
\end{align*}
Let $Z_{\lambda}^{\prime\prime}:=Z_{\lambda}^{\prime}\left( \varphi_{1}+O\big(\tfrac{\log\lambda}{\sqrt{\lambda}}\big)\right)^{1/2}$. Using the continuous mapping theorem and the central limit theorem, we have
\begin{align}\label{eq:clt:cmt}
\frac{1}{\lambda}Z_{\lambda}^{\prime\prime}
+O\big(\tfrac{\log\lambda}{\sqrt{\lambda}}\big)~\overset{\P_{\bar{\mu}, x, y}}\longrightarrow 0,
\qquad \lambda\to\infty.
\end{align}
Therefore, for $(x,y)\in I_{f}$,
\begin{align}
\label{tazi4}
&\P_{\bar{\mu}, x, y}\bigg\{\Big\{Z_{\lambda}^{\prime\prime}
+O\big(\log^{2}\lambda\big)\geq \sqrt{\lambda}\varphi_{2}(\lambda)Z_{\lambda}+\lambda\varphi_{3}(\lambda)\Big\}
\cap A_{\lambda}~\bigg\}\nonumber\\
&\leq\P_{\bar{\mu}, x, y}\bigg\{Z_{\lambda}^{\prime\prime}
+O\big(\log^{2}\lambda\big)\geq O\big(\sqrt{\lambda}\log\lambda\big)
+\lambda\varphi_{3}(\lambda)~\bigg\}\nonumber\\
&\leq \P_{\bar{\mu}, x,y}\bigg\{ \frac{ 1}{\lambda}Z_{\lambda}^{\prime\prime}+O\big(\tfrac{\log\lambda}{\sqrt{\lambda}}\big)
\geq \beta_{1}^{2}f(x)\big(f(x)-f(y)\big)~\bigg\}\nonumber\\
&\to 0, \qquad \lambda\to\infty.
\end{align}
For  $(x,y)\in A_{f}$, by \eqref{tazi2},
\begin{align}
\label{tazi5}
&\P_{\bar{\mu}, x,y }\bigg\{\Big\{Z_{\lambda}^{\prime\prime}
+O\big(\log^{2}\lambda\big)\geq \sqrt{\lambda}\varphi_{2}(\lambda)Z_{\lambda}+\lambda\varphi_{3}(\lambda)\Big\}
\cap A_{\lambda}~\bigg\}\nonumber\\
&=\P_{\bar{\mu}, x,y}\bigg\{\Big\{Z_{\lambda}^{\prime\prime}
+O\big(\log^{2}\lambda\big)\geq O\big(\sqrt{\lambda}\log\lambda\big)+\lambda\varphi_{3}(\lambda) \Big\}
\cap A_{\lambda}~\bigg\}\nonumber\\
&=\P_{\bar{\mu}, x,y}\bigg\{\Big\{\frac{ 1}{\lambda}Z_{\lambda}^{\prime\prime}+O\big(\tfrac{\log\lambda}{\sqrt{\lambda}}\big)
\geq \beta_{1}^{2}f(x)\big(f(x)-f(y)\big)\Big\}\cap A_{\lambda}~\bigg\}\nonumber\\
&\geq \P_{\bar{\mu}, x,y }\bigg\{ \frac{ 1}{\lambda}Z_{\lambda}^{\prime\prime}+O\big(\tfrac{\log\lambda}{\sqrt{\lambda}}\big)
\geq \beta_{1}^{2}f(x)\big(f(x)-f(y)\big)~\bigg\}\nonumber\\
&\quad -\P_{\bar{\mu}}\big\{A_{\lambda}^{c}\mid Q=x\big\}\to 1, \qquad \lambda\to\infty.
\end{align}
Next, let $(x,y)\in B_{f}$. Similarly as in the proof of Theorem \ref{thm1HER}, we consider the events 
\begin{align*}
B_{1,\lambda} = \Big\{Z_{\lambda}< - \frac{1}{\log\lambda}\Big\}, \quad
B_{2,\lambda} = \Big\{Z_{\lambda}> \frac{1}{\log\lambda}\Big\}, \quad
B_{3,\lambda} = \Big\{|Z_{\lambda}|\leq \frac{1}{\log\lambda}\Big\}.
\end{align*}
and note that there exists a standard Gaussian random variable $N$ defined on a probability space $(\Omega ,\mathcal{F},\P)$ such that, by \eqref{tazi2},
\begin{align}
&\P_{\bar{\mu}}\big\{A_{\lambda} \cap B_{1,\lambda} \mid Q=x\big\}\nonumber\\
&=\P_{\bar{\mu}}\big\{B_{1,\lambda}\mid Q=x \big\}
-\P_{\bar{\mu}}\big\{A^{c}_{\lambda} \cap B_{1,\lambda} \mid Q=x\big\} \to \P\big\{N<0 \big\}=\tfrac{1}{2},
\qquad \lambda\to\infty,
\label{tazi8}
\end{align}
and
\begin{align}
&\P_{\bar{\mu}}\big\{B_{2,\lambda}\mid Q=x\big\}\to  \P\big\{N>0 \big\}=\tfrac{1}{2},\nonumber\\
&\P_{\bar{\mu}}\big\{B_{3,\lambda}\mid Q=x\big\}\to  \P\big\{|N|\leq 0 \big\}=0.
\label{tazi9}
\end{align}
Similar to \eqref{eq:clt:cmt} we have 
\begin{align}
\label{eq:clt:cmt*}
Z_{\lambda}^{\prime\prime}\frac{\log\lambda}{\sqrt{\lambda}}
+O\big(\tfrac{\log^3\lambda}{\sqrt{\lambda}}\big)~\overset{\P_{\bar{\mu}, x, y}}\longrightarrow 0,
\qquad \lambda\to\infty.
\end{align}
Since $\tfrac{\log\lambda}{\sqrt{\lambda}}<\tfrac12 M_{-}$, we have
\begin{align}
\label{tazi10}
&\P_{\bar{\mu},x,y}\bigg\{\Big\{Z_{\lambda}^{\prime\prime}
+O\big(\log^{2}\lambda\big)\geq \sqrt{\lambda}\varphi_{2}(\lambda)Z_{\lambda}+\lambda\varphi_{3}(\lambda)\Big\}
\cap B_{2,\lambda}\,\bigg\}\nonumber\\
&\leq \P_{\bar{\mu}, x,y }\bigg\{Z_{\lambda}^{\prime\prime}\frac{\log\lambda}{\sqrt{\lambda}}
+O\big(\tfrac{\log^{3}\lambda}{\sqrt{\lambda}}\big)
> \beta_{1}f(x)\left(\sqrt{\beta_{1}}-\frac{\log\lambda}{\sqrt{\lambda}}\right)~\bigg\}\nonumber\\
&\leq \P_{\bar{\mu}, x, y}\bigg\{Z_{\lambda}^{\prime\prime}\frac{\log\lambda}{\sqrt{\lambda}}
+O\big(\tfrac{\log^{3}\lambda}{\sqrt{\lambda}}\big)
> \tfrac12 M_{-}^{3}\,\bigg\} \to 0, \qquad \lambda\to\infty.
\end{align}
Moreover, 
\begin{align*}
&\P_{\bar{\mu}, x,y}\bigg\{\Big\{Z_{\lambda}^{\prime\prime}
+O\big(\log^{2}\lambda\big)< \sqrt{\lambda}\varphi_{2}(\lambda)Z_{\lambda}+\lambda\varphi_{3}(\lambda)\Big\}
\cap B_{1,\lambda}~\bigg\}\nonumber\\
&\leq \P_{\bar{\mu}, x, y}\bigg\{Z_{\lambda}^{\prime\prime}\frac{\log\lambda}{\sqrt{\lambda}}
+O\big(\tfrac{\log^{3}\lambda}{\sqrt{\lambda}}\big)
<- \beta_{1}^{3/2}f^{3/2}(x)~\bigg\} \to 0, \qquad \lambda\to\infty.
\end{align*}
Therefore by \eqref{tazi8},
\begin{align}
\label{tazi11}
&\P_{\bar{\mu}, x, y}\bigg\{ \Big\{Z_{\lambda}^{\prime\prime}
+O\big(\log^{2}\lambda\big)\geq \sqrt{\lambda}\varphi_{2}(\lambda)Z_{\lambda}+\lambda\varphi_{3}(\lambda)\Big\}
\cap A_{\lambda} \cap B_{1,\lambda}~\bigg\}\nonumber\\
&=\P_{\bar{\mu}}\big\{A_{\lambda} \cap B_{1,\lambda}\mid Q=x \big\}\nonumber\\
&-\P_{\bar{\mu}, x, y}\bigg\{ \Big\{Z_{\lambda}^{\prime\prime}
+O\big(\log^{2}\lambda\big)< \sqrt{\lambda}\varphi_{2}(\lambda)Z_{\lambda}+\lambda\varphi_{3}(\lambda)\Big\}
\cap A_{\lambda} \cap B_{1,\lambda} ~\bigg\} \to \tfrac{1}{2}, \qquad \lambda\to\infty.
\end{align}
It follows from \eqref{tazi9}--\eqref{tazi11} that, for $(x,y)\in B_{f}$,
\begin{align}
\label{tazi12}
&\P_{\bar{\mu}, x,y}\bigg\{ \Big\{Z_{\lambda}^{\prime\prime}
+O\big(\log^{2}\lambda\big)\geq \sqrt{\lambda}\varphi_{2}(\lambda)Z_{\lambda}+\lambda\varphi_{3}(\lambda)\Big\}
\cap A_{\lambda} ~\bigg\} \to \tfrac{1}{2}, \qquad \lambda\to\infty.
\end{align}
Finally, from \eqref{tazi3}--\eqref{tazi5} and \eqref{tazi12} we get
\begin{align*}
&\lim_{\lambda\to\infty}\P_{\bar{\mu}, x, y}\bigg\{ \sum_{j=1}^{d_{\phi}}d_{j}
\geq d_{\phi}(d_{\phi}-1),d_{\phi}\geq 1~ \bigg\}
=\left\{
\begin{array}{ll}
0, &\text{if }(x,y)\in I_{f}, \\
1, &\text{if }(x,y)\in A_{f}, \\
\frac{1}{2}, &\text{if }(x,y)\in B_{f},
\end{array} 
\right.
\end{align*}
and so, by \eqref{tazi0}--\eqref{tazi00} and the dominated convergence theorem, we arrive at
\begin{align}\label{tazi13}
\lim_{\lambda\to\infty}\mu([0,\infty))=\P_{\bar{\mu}}\big\{(Q,Q^{\prime})\in A_{f}\big\}
+\tfrac{1}{2}\P_{\bar{\mu}}\big\{(Q,Q^{\prime})\in B_{f}\big\}.
\end{align}

\medskip\noindent
(c) By Fubini's theorem, we have
\begin{align*}
&\P_{\bar{\mu}}\big\{(Q,Q^{\prime})\in A_{f}\big\}
=\iint_{A_{f}}\dd x\,\dd y\,\beta_{1}^{-1}f(y) \geq \iint_{A_{f}}\dd x\,\dd y\,\beta_{1}^{-1}f(x)\\
&=1-\iint_{I_{f}}\dd x\,\dd y\,\beta_{1}^{-1}f(x)-\iint_{B_{f}}\dd x\,\dd y\,\beta_{1}^{-1}f(x)\\
&=1-\iint_{\substack{\{(y,x)\in[0,1]\times[0,1]:\\f(x)-f(y)> 0\}}}\dd y\,\dd x\,\beta_{1}^{-1}f(x)
-\iint_{B_{f}}\dd x\,\dd y\,\beta_{1}^{-1}f(x)\\
&=1-\iint_{A_{f}}\dd x\,\dd y\,\beta_{1}^{-1}f(y)-\iint_{B_{f}}\dd x\,\dd y\,\beta_{1}^{-1}f(x)\\
&=1-\P_{\bar{\mu}}\big\{(Q,Q^{\prime})\in A_{f}\big\}-\P_{\bar{\mu}}\big\{(Q,Q^{\prime})\in B_{f}\big\}.
\end{align*}
Hence
\begin{align}
\label{tazi14}
\P_{\bar{\mu}}\big\{(Q,Q^{\prime})\in A_{f}\big\}\geq
\tfrac{1}{2}-\tfrac{1}{2}\P_{\bar{\mu}}\big\{(Q,Q^{\prime})\in B_{f}\big\},
\end{align}
which together with \eqref{tazi13} implies that
\begin{align*}
\lim_{\lambda\to\infty}\mu([0,\infty))\geq\tfrac{1}{2}.
\end{align*}

\medskip\noindent
(d) If $\ell_{2}(A_{f})>0$, then $\geq$ in \eqref{tazi14} can be replaced by $>$. Following the same approach as in the proof of (c), we get $\lim_{\lambda\to\infty}\mu([0,\infty))>\frac{1}{2}$. But, if $\ell_{2}(A_{f})=0$, then it follows from \eqref{tazi13} that 
\begin{align*} 
\lim_{\lambda\to\infty}\mu([0,\infty))=\tfrac{1}{2}\P_{\bar{\mu}}\big\{(Q,Q^{\prime})\in B_{f}\big\}.
\end{align*}
This, together with part (c), leads to the conclusion that $\lim_{\lambda\to\infty}\mu([0,\infty))=\frac{1}{2}$.

\medskip\noindent
(e) If $f$ is injective, then $B_{f}=\{(x,y)\in [0,1]\times [0,1]\colon\,x=y\}$ and so
\begin{align*}
\P_{\bar{\mu}}\big\{(Q,Q^{\prime})\in B_{f}\big\}=\iint_{B_{f}}\dd x\dd y~\beta_{1}^{-1}f(y)=0.
\end{align*}
Therefore, by \eqref{tazi13},
\begin{align*}
\lim_{\lambda\to\infty}\mu([0,\infty))=\P_{\bar{\mu}}\big\{(Q,Q^{\prime})\in A_{f}\big\}.
\end{align*}


\paragraph{Proof of Theorem \ref{thm2IER}.}

Let $\nu_{0}(k)$ denote the probability that the root has $k$ children. Since
\begin{align*}
&\E_{\bar{\mu}}\big[d_{\phi}\big] =\int_{[0,1]}\dd x\,\E_{\bar{\mu}}\big[d_{\phi}\mid Q=x\big]
=\int_{[0,1]}\dd x\, \lambda\beta_{1}f(x)=\lambda\beta_{1}^{2},\\
&\E_{\bar{\mu}}\big[d_{1}\big] =\int_{[0,1]}\dd x\,\beta_{1}^{-1}f(x)\E_{\bar{\mu}}\big[d_{1}\mid Q^{\prime}=x\big]
=\int_{[0,1]}\dd x\, \lambda f^{2}(x)=\lambda\beta_{2},
\end{align*}
similarly as in the proof of Theorem \ref{thm2HER} we obtain
\begin{align*}
\E_{\bar{\mu}}\big[\Delta_{\phi}\big] 
=\sum_{k\in\N}\nu_{0}(k)\,\E_{\bar{\mu}}\bigg[\frac{1}{k}\sum_{j=1}^{k}d_{j}+1-k\bigg]
&=\E_{\bar{\mu}}\big[d_{1}\big]\big(1-\nu_{0}(0)\big)+1-\nu_{0}(0)-\E_{\bar{\mu}}\big[d_{\phi}\big]\\
&= 1-(\lambda\beta_{2}+1)\int_{[0,1]} \dd x\,\ee^{-\lambda \beta_{1} f(x)}
+ \lambda(\beta_{2}-\beta_{1}^{2}).
\end{align*}
Since 
\begin{align*}
1-(\lambda M_{+}^{2}+1)\,\ee^{-\lambda M_{-}^{2}} + \lambda(M_{-}^{2}-M_{+}^{2})
\leq \E_{\bar{\mu}}\big[\Delta_{\phi}\big] 
\leq 1-(\lambda M_{-}^{2}+1)\,\ee^{-\lambda M_{+}^{2}}
+ \lambda(M_{+}^{2}-M_{-}^{2}),
\end{align*}
it follows that $\E_{\bar{\mu}}\big[\Delta_{\phi}\big] \to 0$ as $\lambda\downarrow 0$. On the other hand, if $U$ is a uniform $[0,1]$ random variable, then
\begin{align*}
\beta_{2}-\beta_{1}^{2}=\mathbb{V}\mathrm{ar}(f(U))>0
\end{align*}
by noting that $f$ is a non-constant function. Since
\begin{align*}
\E_{\bar{\mu}}\big[\Delta_{\phi}\big] \geq 1-(\lambda M_{+}^{2}+1)\ee^{-\lambda M_{-}^{2}}
+ \lambda (\beta_{2}-\beta_{1}^{2}),
\end{align*}
it follows that $\E_{\bar{\mu}}[\Delta_{\phi}] \to \infty$ as $\lambda\to \infty$. Similarly, since 
\begin{align*}
&\E_{\bar{\mu}}\big[d_{\phi}^{2}\big] =\int_{[0,1]} \dd x\,\E_{\bar{\mu}}\big[d_{\phi}^{2}\mid Q=x\big]
=\int_{[0,1]} \dd x\, \big(\lambda\beta_{1}f(x)+\lambda^{2}\beta_{1}^{2}f^{2}(x)\big)
=\lambda\beta_{1}^{2}+\lambda^{2}\beta_{1}^{2}\beta_{2},\\
&\E_{\bar{\mu}}\big[d_{1}^{2}\big] =\int_{[0,1]} \dd x\,\beta_{1}^{-1}f(x)\,\E_{\bar{\mu}}
\big[d_{1}^{2}\mid Q^{\prime}=x\big]=\int_{[0,1]} \dd x\, \big(\lambda f^{2}(x)
+\lambda^{2}\beta_{1}f^{3}(x)\big)=\lambda\beta_{2}+\lambda^{2}\beta_{1}\beta_{3},
\end{align*}
it follows that
\begin{align*}
\E_{\bar{\mu}}\big[\Delta_{\phi}^{2}\big] 
&= \sum_{k\in\N} \nu_{0}(k)\,\E_{\bar{\mu}}\big[\Delta_{\phi}^{2}\mid d_{\phi}=k\big]
=\sum_{k\in\N} \nu_{0}(k)\,\E_{\bar{\mu}}\bigg[\Big(\frac{1}{k}\sum_{j=1}^{k}d_{j}+1-k\Big)^{2}\bigg]\\
&=\sum_{k\in\N} \nu_{0}(k)\,\E_{\bar{\mu}}\bigg[\frac{1}{k^{2}}\sum_{j=1}^{k}d_{j}^{2}
+\frac{1}{k^{2}}\sum_{i=1}^{k}\sum_{j=1 \atop {j\neq i}}^{k}d_{i}d_{j}+1+k^{2}
+\dfrac{2}{k}\sum_{j=1}^{k}d_{j}-2\sum_{j=1}^{k}d_{j}-2k\bigg]\\
&=\mathbb{V}\mathrm{ar}_{\bar{\mu}}\big(d_{1}\big)\,\E_{\bar{\mu}}\Big[d_{\phi}^{-1}
\mathbbm{1}_{\{d_{\phi}\neq 0\}}\Big]+\big(\E_{\bar{\mu}}\big[d_{1}\big]\big)^{2}\big(1-\nu_{0}(0)\big)+1-\nu_{0}(0)\\
&\quad\quad+\E_{\bar{\mu}}\big[d_{\phi}^{2}\big]+2\E_{\bar{\mu}}\big[d_{1}\big]\big(1-\nu_{0}(0)\big)
-2\E_{\bar{\mu}}\big[d_{1}\big]\,\E_{\bar{\mu}}\big[d_{\phi}\big]-2\E_{\bar{\mu}}\big[d_{\phi}\big]\\
&=(\lambda\beta_{2}+\lambda^{2}\beta_{1}\beta_{3}-\lambda^{2}\beta_{2}^{2})\,
\E_{\bar{\mu}}\big[d_{\phi}^{-1}\mathbbm{1}_{\{d_{\phi}\neq 0\}}\big]
-(1+\lambda^{2}\beta_{2}^{2}+2\lambda\beta_{2})\int_{0}^{1} \dd x\,\mathrm{e}^{-\lambda \beta_{1} f(x)}\\
&\quad\quad+\lambda^{2}(\beta_{2}^{2}-\beta_{1}^{2}\beta_{2})+\lambda(2\beta_{2}-\beta_{1}^{2})+1\\
&=\lambda\big(\beta_{2}+\lambda(\beta_{1}\beta_{3} -\beta_{2}^{2})\big)\int_{[0,1]} \dd x\,
\sum_{k\in\N}k^{-1}\dfrac{\ee^{-\lambda \beta_{1}f(x)}(\lambda \beta_{1}f(x))^{k}}{k!}\\
&\quad\quad-(1+\lambda^{2}\beta_{2}^{2}+2\lambda\beta_{2})\int_{[0,1]} \dd x\,
\mathrm{e}^{-\lambda \beta_{1} f(x)} +\lambda^{2}(\beta_{2}^{2}-\beta_{1}^{2}\beta_{2})
+\lambda(2\beta_{2}-\beta_{1}^{2})+1.
\end{align*}
Again, since
\begin{align*}
&-(1+\lambda^{2}M_{+}^{4}+2\lambda M_{+}^{2})\,\mathrm{e}^{-\lambda M_{-}^{2}}+\lambda^{2}(M_{-}^{4}-M_{+}^{4})
+\lambda(2M_{-}^{2}-M_{+}^{2})+1\\
&\qquad\qquad\qquad\qquad\qquad \leq\E_{\bar{\mu}}\big[\Delta_{\phi}^{2}\big] 
\leq (\lambda M_{+}^{2}+\lambda^{2}M_{+}^{4}) - \mathrm{e}^{-\lambda M_{+}^{2}}
+\lambda^{2}M_{+}^{4}+2\lambda M_{+}^{2}+1,
\end{align*}
it follows that $\E_{\bar{\mu}}\big[\Delta^{2}_{\phi}\big] \to 0$ as $\lambda\downarrow 0$. In addition, $\lim_{\lambda\to\infty} \E_{\bar{\mu}}[\Delta_{\phi}^{2}] \geq \lim_{\lambda\to\infty}(\E_{\bar{\mu}}[\Delta_{\phi}])^{2}=\infty$.
 

\paragraph{Proof of Theorem \ref{thm3IER}.}

Fix $\lambda \in (0,\infty)$. First, we show that for each $k\in\N$,
\begin{align}\label{tazi16}
\P_{\bar{\mu}}\bigg\{\sum_{j=1}^{k}d_{j}\geq kx+k(k-1)\bigg\} 
\sim \P_{\bar{\mu}}\bigg\{\sum_{j=1}^{k}d_{j}= kx+k(k-1)\bigg\}, \qquad x \to \infty.
\end{align}
Indeed, without loss of generality we may assume that $x\in\N$. With the same notations and methods used in the proof of Theorem \ref{thm1IER}, we have, for $k\in\N$ and $m\in\N_{0}$,
\begin{align*}
&\P_{\bar{\mu}}\bigg\{\sum_{j=1}^{k}d_{j}= kx+k(k-1)+m\bigg\}\\
&= \beta_{1}^{-k}\prod_{j=1}^{k} \int_{[0,1]} \dd y_{j}\,f(y_{j})\, 
\dfrac{\ee^{-\lambda \beta_{1}
\sum_{j=1}^{k} f(y_{j})}(\lambda\beta_{1}\sum_{j=1}^{k} f(y_{j}))^{kx+k(k-1)+m}}{(kx+k(k-1)+m)!}.
\end{align*}
Therefore, for $k,l\in\N$,
\begin{align*}
&\dfrac{\P_{\bar{\mu}}\big\{\sum_{j=1}^{k}d_{j}= kx+k(k-1)+l\big\}}
{\P_{\bar{\mu}}\big\{\sum_{j=1}^{k}d_{j}= kx+k(k-1)\big\}}\\
&\leq\dfrac{(\lambda  \beta_{1})^{l}\ee^{-\lambda \beta_{1}k(M_{-}-M_{+})}}{(kx+k(k-1)+1)\times\cdots\times(kx+k(k-1)+l)}\\
&\qquad \times\dfrac{\prod_{j=1}^{k} \int_{[0,1]} \dd y_{j}\,f(y_{j}) (\sum_{j=1}^{k} f(y_{j}))^{kx+k(k-1)+l}}
{\prod_{j=1}^{k} \int_{[0,1]} \dd y_{j}\,f(y_{j}) (\sum_{j=1}^{k} f(y_{j}))^{kx+k(k-1)}}\\
&\leq\dfrac{(\lambda  \beta_{1}k M_{+})^{l}\ee^{-\lambda \beta_{1}k(M_{-}-M_{+})}}{(kx+k(k-1)+1)
\times\cdots\times(kx+k(k-1)+l)}
\to 0, \qquad x\to\infty,
\end{align*}
and 
\begin{align*}
&\sum_{l\in\N}\dfrac{\P_{\bar{\mu}}\big\{\sum_{j=1}^{k}d_{j}= kx+k(k-1)+l\big\}}
{\P_{\bar{\mu}}\big\{\sum_{j=1}^{k}d_{j}= kx+k(k-1)\big\}}\\
&\leq \ee^{-\lambda \beta_{1}k(M_{-}-M_{+})}\sum_{l\in\N}\dfrac{(\lambda \beta_{1}k M_{+})^{l}}{l!}
\leq\ee^{-\lambda \beta_{1}k(M_{-}-2M_{+})}<\infty.
\end{align*}
Therefore, \eqref{tazi16} follows by the dominated convergence theorem.

For $k\in\N\setminus\{1\}$, since
\begin{align*}
&\prod_{j=1}^{k} \int_{[0,1]} \dd y_{j}\,f(y_{j}) \Big(\sum_{j=1}^{k} f(y_{j})\Big)^{kx}
\leq k^{kx}\prod_{j=1}^{k} \int_{[0,1]} \dd y_{j}\,f(y_{j}) \sum_{j=1}^{k} f^{kx}(y_{j})\\
&\leq k^{kx}M_{+}^{kx-x+k-1}\prod_{j=1}^{k} \int_{[0,1]} \dd y_{j}\,\sum_{j=1}^{k} f^{x+1}(y_{j})
=k^{kx+1}M_{+}^{kx-x+k-1}\beta_{x+1},
\end{align*}
by Stirling's formula we have
\begin{align}
\label{tazi20}
&\dfrac{\P_{\bar{\mu}}\big\{\sum_{j=1}^{k}d_{j}= kx+k(k-1)\big\}}{\P_{\bar{\mu}}\big\{d_1=x \big\}}\nonumber\\
&\leq\dfrac{x!}{(kx)!}\times\dfrac{\ee^{-\lambda \beta_{1}(kM_{-}-M_{+})}\beta_{1}^{-k+1}
(\lambda\beta_{1}kM_{+})^{k(k-1)}(\lambda\beta_{1})^{kx-x}}{(kx+1)\times\cdots\times(kx+k(k-1))}\nonumber\\
&\qquad \times\dfrac{\prod_{j=1}^{k} \int_{[0,1]} \dd y_{j}\,f(y_{j}) 
(\sum_{j=1}^{k} f(y_{j}))^{kx}}{\int_{[0,1]} \dd y_{1}\, (f(y_{1}))^{x+1}}\nonumber\\
&\leq\dfrac{x!}{(kx)!}\times\dfrac{\ee^{-\lambda \beta_{1}(kM_{-}-M_{+})}\beta_{1}^{-k+1}
(\lambda\beta_{1}kM_{+})^{k(k-1)}(\lambda\beta_{1})^{kx-x}}{(kx+1)\times\cdots\times(kx+k(k-1))}
\times k^{kx+1}M_{+}^{kx-x+k-1}\nonumber\\
&\sim\left(\frac{x}{\ee M_{+}}\right)^{-x(k-1)}
\times\dfrac{\ee^{-\lambda \beta_{1}(kM_{-}-M_{+})}\beta_{1}^{-k+1}(\lambda\beta_{1}kM_{+})^{k(k-1)}
(\lambda\beta_{1})^{kx-x}}{(kx+1)\times\cdots\times(kx+k(k-1))}\times \sqrt{k} M_{+}^{k-1}\nonumber\\
&\to 0, \qquad x\to\infty .
\end{align}
Therefore, taking the same approach as in \eqref{forIHEproof}, we get from \eqref{tazi16}--\eqref{tazi20} and the dominated convergence theorem that
\begin{align*}
\P_{\bar{\mu}}\{\Delta_{\phi}\geq x\}\sim \P_{\bar{\mu}}\big\{d_{\phi}=1 \big\} \P_{\bar{\mu}}\big\{d_{1}\geq x\big\},
\qquad x \to \infty.
\end{align*}
Hence, from \eqref{tazi16},
\begin{align*}
\P_{\bar{\mu}}\big\{\Delta_{\phi}\geq x\big\}
&\sim\P_{\bar{\mu}}\big\{d_{\phi}=1 \big\} \P_{\bar{\mu}}\big\{d_{1}= x\big\}, \qquad x \to \infty,
\end{align*}
and so Stirling's formula yields
\begin{align*}
&\left(\frac{ \lambda M_{-}^{2}\ee^{-2\lambda\beta_{1}M_{+}}}{\sqrt{2\pi}}\right)
\frac{ 1}{\sqrt{ x}}\beta_{x}\ee^{-x\log(\frac{x}{\lambda\beta_{1}\ee})}\lesssim\P_{\bar{\mu}}\big\{\Delta_{\phi}\geq x\big\}
\lesssim\left(\frac{ \lambda M_{+}^{2}\ee^{-2\lambda\beta_{1}M_{-}}}{\sqrt{2\pi}}\right)
\frac{ 1}{\sqrt{ x}}\beta_{x}\ee^{-x\log(\frac{x}{\lambda\beta_{1}\ee})},\\ 
&\qquad x \to\infty,
\end{align*}
where we recall that $\beta_x = \int_{[0,1]} \dd y\,f^x(y)$.

\medskip\noindent
(a) Suppose that $\ell_{1}(\{y\in [0,1]\colon\,f(y)=M_{+}\})>0$ with $\ell_{1}$ the Lebesgue measure. Then 
\begin{align*}
\ell_{1}\big(\{y\in [0,1]\colon\,f(y)=M_{+}\}\big)M_{+}^{x} \leq \beta_{x}\leq M_{+}^{x},
\end{align*}
and hence $\beta_{x}\asymp M_{+}^{x}$ as $x\to\infty$. 

\medskip\noindent
(b) Suppose that $f(y_{\star})=M_{+}$ and $f(y_{\star})-f(y) \asymp |y_{\star}-y|^\alpha$, $y \to y_{\star}$, for some $y_{\star} \in [0,1]$ and $\alpha \in (0,\infty)$, and $f$ is bounded away from $M_{+}$ outside any neighbourhood of $y_{\star}$. Then
\begin{equation*}
\begin{aligned}
\frac{\beta_{x}}{M_{+}^{x}} &= \int_{[0,1]} \dd y\,\left(\frac{f(y)}{f(y_{\star})}\right)^x 
= \int_{[0,1]} \dd y\,\left(1-\frac{f(y_{\star})-f(y)}{f(y_{\star})}\right)^x\\
&\asymp \int_{[0,1]} \dd y\,\exp\left(-x\,\frac{|y_{\star}-y|^\alpha}{M_{+}}\right)
\asymp \int_{y_{\star}-x^{-1/\alpha}}^{y_{\star}+x^{-1/\alpha}} \dd y \asymp x^{-1/\alpha},
\qquad x \to \infty,
\end{aligned}
\end{equation*}
where the first $\asymp$ uses that only the neighbourhood of $y_{\star}$ contributes, and the second $\asymp$ uses that the integral is dominated by the neighbourhood where the exponent is of order $1$. Hence $\beta_{x} \asymp x^{-1/\alpha}\,M_{+}^{x}$ as $x\to\infty$.


\subsection{Configuration model}


\paragraph{Proof of Theorem~\ref{thm1CM}.}

Write 
\begin{align*}
\mu([0,\infty)) = \sum_{k\in\N} p_{k} \nu_{1}^{\circledast k}\big([k(k-1),\infty)\big),
\end{align*}
where $\nu_{1}^{\circledast k}$ is the $k$-fold convolution of $\nu_1=p^{\star}$. We look at the special case where $p_{k} = k^{-\tau}/\zeta(\tau)$ with $\tau \in (2,\infty)$ and $\zeta$ the Riemann zeta function. Then,
\begin{align*}
\mu([0,\infty)) \geq p_{1}=\dfrac{1}{\zeta(\tau)}\geq\dfrac{1}{\zeta(2)}=\dfrac{6}{\pi^{2}}>\dfrac{1}{2}.
\end{align*}

For $m\in\N$ and $s \in (1,\infty)$, let $\zeta_{m}(s) = \sum_{j=m}^{\infty} j^{-s}$ be the truncated Riemann zeta function. Then\footnote{It is worthwhile to note here that $\zeta_{1}(s)> 1$, and for $m> 1$, there is a sharper upper bound which will be presented in \eqref{truncatedRiemann2}.}
\begin{align}\label{truncatedRiemann}
\dfrac{m^{-s+1}}{s-1} = \int_{m}^{\infty} \dd x\,x^{-s} \leq \zeta_{m}(s)\leq 1+\int_{m}^{\infty} \dd x\,x^{-s}
= \dfrac{s-1+m^{-s+1}}{s-1}.
\end{align}
For each $k\in\N$,
\begin{align*}
\nu_{1}^{\circledast k}\big([k(k-1),\infty)\big)\geq \P_{\bar{\mu}}\big\{d_{1}\geq k(k-1)\big\}
=\dfrac{\zeta_{k(k-1)+1}(\tau -1)}{\zeta_{1}(\tau -1)}\geq \dfrac{\zeta_{k^{2}}(\tau -1)}{\zeta_{1}(\tau -1)}
\geq \dfrac{k^{-2\tau + 4}}{\tau -1}.
\end{align*}
Consequently, 
\begin{align*}
\mu([0,\infty)) \geq \dfrac{1}{\tau -1} \sum_{k\in\N} p_{k}\,k^{-2\tau + 4} \geq 
\dfrac{1}{\tau -1}\dfrac{\zeta_{1}(3\tau - 4)}{\zeta_{1}(\tau)}.
\end{align*}
By the dominated convergence theorem, $\zeta_{1}(3\tau - 4) \to \zeta_{1}(2)$ as $ \tau\downarrow 2$, from which we obtain that $\mu([0,\infty)) \to 1$ as $\tau\downarrow 2$, and
\begin{align*}
\mu([0,\infty)) \geq \dfrac{1}{\zeta(\tau)}\to 1, \qquad \tau\to\infty,
\end{align*}
from which we obtain that $\mu([0,\infty)) \to 1$ as $\tau\to \infty$. 


\paragraph{Proof of Theorem~\ref{thm2CM}.}

Let $\E[D]<\infty$. Since 
\begin{align*}
\E_{\bar{\mu}}[d_{\phi}] = \E[D], \qquad \E_{\bar{\mu}}[d_{1}] = (\E[D])^{-1}(\E[D^{2}]-\E[D]), 
\end{align*}
the first moment equals
\begin{align*}
\E_{\bar{\mu}}\big[\Delta_{\phi}\big] = \sum_{k\in\N} p_{k}\left(\dfrac{1}{k}\sum_{j=1}^{k}\E_{\bar{\mu}}[d_{j}]+1-k\right)
=\E_{\bar{\mu}}[d_{1}]+1-\E_{\bar{\mu}}[d_{\phi}]=\dfrac{\mathbb{V}\mathrm{ar}(D)}{\E[D]},
\end{align*}
which is strictly positive whenever $D$ is a non-degenerate random variable, i.e., the limit is not a regular tree. Moreover, $\E_{\bar{\mu}}[d_{\phi}^{-1}] = \E[D^{-1}]$, $\E_{\bar{\mu}}[d_{\phi}^{2}] = \E[D^{2}]$, and if also $\E[D^2]<\infty$, then
\begin{align*}
\E_{\bar{\mu}}\big[d_{1}^{2}\big]=\dfrac{\sum_{k\in\N}(k-1)^{2}kp_{k}}{\E[D]}=\dfrac{\E[D^{3}]+\E[D]-2\E[D^{2}]}{\E[D]}.
\end{align*}
Therefore, if $\E[D^2]<\infty$, then
\begin{align*}
&\E_{\bar{\mu}}\big[\Delta_{\phi}^{2}\big]\\ 
&= \sum_{k\in\N}p_{k}\E_{\bar{\mu}}\big[\Delta_{\phi}^{2}\mid d_{\phi}=k\big]
=\sum_{k\in\N}p_{k}\E_{\bar{\mu}}\bigg[\Big(\frac{1}{k}\sum_{j=1}^{k}d_{j}+1-k\Big)^{2}\bigg]\\
&=\sum_{k\in\N}p_{k}\E_{\bar{\mu}}\bigg[\frac{1}{k^{2}}\sum_{j=1}^{k}d_{j}^{2}+\frac{1}{k^{2}}
\sum_{i=1}^{k}\sum_{j=1 \atop {j\neq i}}^{k}d_{i}d_{j}+1+k^{2}+\dfrac{2}{k}\sum_{j=1}^{k}d_{j}
-2\sum_{j=1}^{k}d_{j}-2k\bigg]\\
&=\mathbb{V}\mathrm{ar}_{\bar{\mu}}\big(d_{1}\big)\E_{\bar{\mu}}\big[d_{\phi}^{-1}\big]
+\big(\E_{\bar{\mu}}\big[d_{1}\big]\big)^{2}+1+\E_{\bar{\mu}}\big[d_{\phi}^{2}\big]
+2\E_{\bar{\mu}}\big[d_{1}\big]
-2\E_{\bar{\mu}}\big[d_{1}\big]\E_{\bar{\mu}}\big[d_{\phi}\big]-2\E_{\bar{\mu}}\big[d_{\phi}\big]\\
&= \dfrac{\left(\E[D^{3}]\E[D]-(\E[D^{2}])^{2}\right)
\E[D^{-1}]+\E[D^{2}]\mathbb{V}\mathrm{ar}(D)}{(\E[D])^{2}},
\end{align*}
and so using the Cauchy-Schwarz inequality, 
\begin{align*}
\E_{\bar{\mu}}\big[\Delta_{\phi}^{2}\big] 
\geq \dfrac{\E[D^{2}]\mathbb{V}\mathrm{ar}(D)}{(\E[D])^{2}},
\end{align*}
which implies that
\begin{align*}
\mathbb{V}\mathrm{ar}_{\bar{\mu}}(\Delta_{\phi}) \geq \mathbb{V}\mathrm{ar}(D)\dfrac{\E[D^{2}]
-\mathbb{V}\mathrm{ar}(D)}{(\E[D])^{2}}=\mathbb{V}\mathrm{ar}(D). 
\end{align*}


\paragraph{Proof of Theorem \ref{thm3CM}.}

Let $x\in (0,\infty)$ and $\tau \in (2,\infty)$. For $m\in\N$ and $s\in(1,\infty)$,
\begin{align}
\label{truncatedRiemann2}
\zeta_{m}(s)=\sum_{k=m}^{\infty} k^{-s}\leq \int_{m-1}^{\infty} \dd x\,x^{-s} = \dfrac{(m-1)^{-s+1}}{s-1}.
\end{align}
Therefore
\begin{align*}
\P_{\bar{\mu}}\big\{d_{1}\geq x\big\}=\dfrac{1}{\zeta(\tau)\,\E[D]}\sum_{k=\lceil x\rceil}^{\infty}(k+1)^{-\tau+1}\leq\dfrac{1}{\E[D]}
\sum_{k=\lceil x\rceil +1}^{\infty}k^{-\tau+1}\leq \dfrac{1}{\E[D]}\,\dfrac{x^{-\tau +2}}{\tau -2}
\end{align*}
and
\begin{align*}
&\P_{\bar{\mu}}\{\Delta_{\phi}\geq x\}=\sum_{k\in\N}\P_{\bar{\mu}}\big\{d_{\phi}=k \big\}
\P_{\bar{\mu}}\Big\{\sum_{j=1}^{k}d_{j}\geq k x +k(k-1)\Big\}\\
&\leq\sum_{k\in\N}\P_{\bar{\mu}}\big\{d_{\phi}=k \big\}\P_{\bar{\mu}}\Big\{\sum_{j=1}^{k}d_{j}\geq k x \Big\}
\leq\sum_{k\in\N}k\P_{\bar{\mu}}\big\{d_{\phi}=k \big\}\P_{\bar{\mu}}\big\{d_{1}\geq x \big\}
\leq (\tau -2)^{-1}x^{-\tau +2}.
\end{align*}
Moreover, by \eqref{truncatedRiemann}, for $y>0$,
\begin{align*}
\P_{\bar{\mu}}\big\{d_{1}\geq y\big\}&=\sum_{k=\lceil y\rceil}^{\infty}\dfrac{(k+1)^{-\tau +1}}{\zeta(\tau-1)}=\dfrac{\zeta_{\lceil y\rceil +1}(\tau-1)}{\zeta(\tau-1)}\geq \dfrac{(y+2)^{-\tau+2}}{(\tau -2)\zeta(\tau-1)}.
\end{align*}
Therefore
\begin{align*}
\P_{\bar{\mu}}\big\{d_{1}\geq x\big\}\asymp x^{-\tau+2},\quad\quad x\to\infty,
\end{align*}
and
\begin{align*}
&\P_{\bar{\mu}}\{\Delta_{\phi}\geq x\} \geq \sum_{k\in\N}\P_{\bar{\mu}}\big\{d_{\phi}=k \big\}
\P_{\bar{\mu}}\big\{d_{1}\geq k^{2}(x+1)\big\}\\
&\geq \dfrac{1}{(\tau -2)\zeta(\tau)\zeta(\tau-1)}\sum_{k\in\N}k^{-\tau}\big(k^{2}(x+1)+2\big)^{-\tau +2}\\
&\geq \dfrac{3^{-\tau+2}\zeta(3\tau-4)}{(\tau -2)\zeta(\tau)\zeta(\tau-1)}(x+1)^{-\tau+2}
\geq \dfrac{3^{-\tau+2}}{\tau (3\tau -5)}(x+1)^{-\tau+2}
\geq 3^{-\tau+1}\tau^{-2}(x+1)^{-\tau+2}.
\end{align*}


\subsection{Preferential attachment model}


\paragraph{Proof of Theorem~\ref{thm1PAM}.}

First note that
\begin{equation*}
d_{\phi}=\no(\phi)+\ny(\phi)=1+\ny(\phi).
\end{equation*}
Let $d_{i}$ denote the number of children of $\phi_{i} = \phi i$. We have $d_{1}=1+\ny(\phi_{1})$ and, when $\ny(\phi)\geq 1$,
\begin{align*}
d_{i}=\ny(\phi_{i}),\qquad  i\in\{2,\ldots ,d_{\phi}\}.
\end{align*}
For $a\in[0,1]$, define 
\begin{align*}
\kappa (a) = \int_{[a,1]} \dd x\,\dfrac{1}{2+\delta}\dfrac{x^{-(1+\delta)/(2+\delta)}}{a^{1/(2+\delta)}}
= a^{-\frac{1}{2+\delta}}-1.
\end{align*}
For $b \in (0,\infty)$, let $Y(a,b) \overset{d}= \mathrm{Poisson}(b\kappa (a))$. Then
\begin{align*}
\mu([0,\infty)) = \sum_{k\in\N}  \P_{\bar{\mu}}\{d_{\phi}=k\} 
\P_{\bar{\mu}}\left\{ \sum_{j=1}^{k}d_{j}\geq k(k-1)~\mid~d_{\phi}=k \right\}.
\end{align*}
Recall that $\Gamma_{\phi}\overset{d}= \text{Gamma}(1+\delta, 1)$, hence
\begin{align}
\label{a-04}
\P_{\bar{\mu}}\{d_{\phi}=k\mid A_{\phi}=a\}
&= \int_{0}^{\infty} \dd b\,\P_{\bar{\mu}}\big\{\ny(\phi)=k-1\mid A_{\phi}=a,\Gamma_{\phi}=b\big\}
\dfrac{b^{\delta} \ee^{-b}}{\Gamma(1+\delta)}\nonumber\\
&=  \int_{0}^{\infty} \dd b\,\P\big\{Y(a,b)=k-1\big\}\dfrac{b^{\delta} 
\ee^{-b}}{\Gamma(1+\delta)}\nonumber\\
&=\dfrac{\Gamma(k+\delta)}{(k-1)!\,\Gamma(1+\delta)}\Big(1-a^{\frac{1}{2+\delta}}\Big)^{k-1}a^{\frac{1+\delta}{2+\delta}},
\end{align}
and so
\begin{align}
\label{a-4}
\P_{\bar{\mu}}\{d_{\phi}=k\}
&= \dfrac{\Gamma(k+\delta)}{(k-1)!\,\Gamma(1+\delta)}\int_{[0,1]} \dd a \,\Big(1-a^{\frac{1}{2+\delta}}\Big)^{k-1}a^{\frac{1+\delta}{2+\delta}}\nonumber\\
&= \dfrac{(2+\delta)\Gamma(k +\delta)}{(k-1)!\,\Gamma(1+\delta)}\int_{[0,1]} \dd u \,(1-u)^{k-1}u^{2+2\delta}\nonumber\\
&= \dfrac{(2+\delta)\Gamma(3+2\delta)\Gamma(k+\delta)}{\Gamma(1+\delta)
\Gamma(k+3+2\delta)}.
\end{align}
Hence
\begin{align}
\label{a-5}
\mu([0,\infty))\geq \P_{\bar{\mu}}\{d_{\phi}=1\}=\dfrac{2+\delta}{3+2\delta},
\end{align}
and so $\mu([0,\infty))\to 1$ as $\delta \downarrow -1$. Moreover, since $ \tfrac{2+\delta}{3+2\delta}>\frac12$, it follows from \eqref{a-5} that, for all $\delta \in (-1,\infty)$,
\begin{align*}
\mu([0,\infty)) > \dfrac{1}{2}.
\end{align*}


\paragraph{Proof of Theorem~\ref{thm2PAM}.}

Let $\kappa(\cdot)$ and $Y(\cdot,\cdot)$ be as in the proof of Theorem \ref{thm1PAM}. Also, let $d_{i}$ denote the number of children of $\phi_{i} = \phi i$. Then $\ny(\phi)$ is distributed as $Y(A_{\phi},\Gamma_{\phi})$,
and hence
\begin{align}
\label{a-7}
\E_{\bar{\mu}}\big[d_{\phi}\big] = 1+\E_{\bar{\mu}}\big[\ny(\phi)\big] 
= 1+\E_{\bar{\mu}}[\Gamma_{\phi}]\E_{\bar{\mu}}[\kappa(A_{\phi})]=2.
\end{align}

Abbreviate
\begin{equation*}
\gamma^\dagger_{k,\delta} = \dfrac{\Gamma(k+3+2\delta)}{(k-1)!\Gamma(3+2\delta)}.
\end{equation*}
For $k\in\N$, using \eqref{a-04}--\eqref{a-4}, we get
\begin{align*}
\E_{\bar{\mu}}\big[A_{\phi}^{-\frac{1}{2+\delta}}\mid d_{\phi}=k\big]\
&= \int_{[0,1]} \dd a\, \dfrac{\P_{\bar{\mu}}\{d_{\phi}=k\mid A_{\phi}=a\}}{\P_{\bar{\mu}}\{d_{\phi}=k\}}a^{-\frac{1}{2+\delta}}\\
&= \dfrac{\gamma^\dagger_{k,\delta}}{2+\delta}
\int_{[0,1]} \dd a\,(1-a^{\frac{1}{2+\delta}})^{k-1}a^{\frac{\delta}{2+\delta}}\\
&= \gamma^\dagger_{k,\delta}
\int_{[0,1]} \dd u\,(1-u)^{k-1}u^{1+2\delta}
= \dfrac{k+2+2\delta}{2+2\delta}.
\end{align*}
Hence, considering $\phi_1$, we have
\begin{align*}
\E_{\bar{\mu}}\big[\kappa(A_{\phi_{1}})\mid d_{\phi}=k\big]
=\E_{\bar{\mu}}\big[U_{\phi_{1}}^{-\frac{1}{1+\delta}}\big]
\E_{\bar{\mu}}\big[A_{\phi}^{-\frac{1}{2+\delta}}\mid d_{\phi}=k\big]-1
=\left\{
\begin{array}{ll}
\frac{k+2}{2\delta}, &\text{if }\delta>0, \\
\infty, &\text{if }-1<\delta\leq 0.
\end{array} 
\right.
\end{align*}
Thus, observing that $\ny(\phi_{1})$ is distributed as $Y(A_{\phi_{1}},\Gamma_{\phi_{1}})$, we have
\begin{align}
\label{a-6}
\E_{\bar{\mu}}\big[d_{1}\mid d_{\phi}=k\big] &=1+\E_{\bar{\mu}}\big[\ny(\phi_{1})\mid d_{\phi}=k\big] \nonumber\\&=
1+\E_{\bar{\mu}}[\Gamma_{\phi_{1}}]\, \E_{\bar{\mu}}\big[\kappa(A_{\phi_{1}})
\mid d_{\phi}=k\big]
=\left\{
\begin{array}{ll}
1+\tfrac{2+\delta}{2\delta}(k+2), &\text{if }\delta>0, \\
\infty, &\text{if }-1<\delta\leq 0.
\end{array} 
\right.
\end{align}

Next, let $k\geq 2$. For $2\leq j\leq k$, $d_{j}=\ny(\phi_{j})$  and is distributed as $Y(A_{\phi_{j}},\Gamma_{\phi_{j}})$.
Hence
\begin{align}
\label{pam3}
\E_{\bar{\mu}}\bigg[\sum_{j=2}^{k}d_{j}\mid d_{\phi}=k\bigg] 
&=\E_{\bar{\mu}}\bigg[\sum_{j=2}^{k}\ny(\phi_{j})\mid d_{\phi}=k\bigg]\nonumber\\
&=\sum_{j=2}^{k}\E_{\bar{\mu}}[\Gamma_{\phi_{j}}]\, \E_{\bar{\mu}}\big[\kappa(A_{\phi_{j}})\mid d_{\phi}=k\big]\nonumber\\
&=(1+\delta)\E_{\bar{\mu}}\bigg[\sum_{j=2}^{k}\kappa(A_{\phi_{j}})\mid d_{\phi}=k\bigg]\nonumber\\
&=(1+\delta)\bigg(\E_{\bar{\mu}}\bigg[\sum_{j=2}^{k}A_{\phi_{j}}^{-\frac{1}{2+\delta}}\mid d_{\phi}=k\bigg]-k+1\bigg).
\end{align}
There exists $k-1$ non-ordered i.i.d.\ uniform $[A_{\phi},1]$ random variables $\tilde{U}_{1},\ldots ,\tilde{U}_{k-1}$ such that the random variable $\sum_{j=2}^{k}A_{\phi_{j}}^{-\frac{1}{2+\delta}}$ given the event $\{d_{\phi}=k\}$ is distributed as $\sum_{j=1}^{k-1}\tilde{U}_{j}^{-\frac{1}{2+\delta}} \mid d_{\phi}=k$. Therefore, using \eqref{pam3}, we can write
\begin{align}
\label{pam1}
\E_{\bar{\mu}}\bigg[\sum_{j=2}^{k}d_{j}\mid d_{\phi}=k\bigg] 
&=(1+\delta)\bigg(\E_{\bar{\mu}}\bigg[\sum_{j=1}^{k-1}\tilde{U}_{j}^{-\frac{1}{2+\delta}}\mid d_{\phi}=k\bigg]-k+1\bigg)\nonumber\\
&=(1+\delta)(k-1)\bigg(\E_{\bar{\mu}}\big[\tilde{U}_{1}^{-\frac{1}{2+\delta}}\mid d_{\phi}=k\big]-1\bigg).
\end{align}
Using \eqref{a-04} and \eqref{a-4}, we compute
\begin{align*}
&\E_{\bar{\mu}}\big[\tilde{U}_{1}^{-\frac{1}{2+\delta}}\mid d_{\phi}=k\big]\\
&= \int_{[0,1]}\dd u\, \int_{[0,u]}  \dd a\, \dfrac{\P_{\bar{\mu}}\{d_{\phi}=k\mid A_{\phi}=a\}}{\P_{\bar{\mu}}
\{d_{\phi}=k\}}(1-a)^{-1}u^{-\frac{1}{2+\delta}}\\
&= \dfrac{\gamma^\dagger_{k,\delta}}{2+\delta}
\int_{[0,1]}\dd a\, (1-a^{\frac{1}{2+\delta}})^{k-1}a^{\frac{1+\delta}{2+\delta}}(1-a)^{-1}\int_{[a,1]}  \dd u\, u^{-\frac{1}{2+\delta}}\\
&= \dfrac{\gamma^\dagger_{k,\delta}}{1+\delta}
\int_{[0,1]}\dd a\, (1-a^{\frac{1}{2+\delta}})^{k-1}a^{\frac{1+\delta}{2+\delta}}(1-a)^{-1}(1-a^{\frac{1+\delta}{2+\delta}})\\&
\leq \dfrac{\gamma^\dagger_{k,\delta}}{1+\delta}
\int_{[0,1]}\dd a\, (1-a^{\frac{1}{2+\delta}})^{k-1}a^{\frac{1+\delta}{2+\delta}}\\
&=\dfrac{2+\delta}{1+\delta}\,\gamma^\dagger_{k,\delta}
\int_{[0,1]}\dd u\, (1-u)^{k-1}u^{2+2\delta}
=\dfrac{2+\delta}{1+\delta}.
\end{align*}
Therefore, from \eqref{pam1} we get
\begin{align}
\label{pam2}
0\leq\E_{\bar{\mu}}\bigg[\sum_{j=2}^{k}d_{j}\mid d_{\phi}=k\bigg] \leq k-1 .
\end{align}
Observe that
\begin{align*}
\E_{\bar{\mu}}\big[\Delta_{\phi}\big]
&= \sum_{k=1}^{\infty}\P_{\bar{\mu}}\{d_{\phi}=k\}\bigg(\dfrac{1}{k}
\sum_{j=1}^{k}\E_{\bar{\mu}}\big[d_{j}\mid d_{\phi}=k\big]\bigg)-1.
\end{align*}
Hence from \eqref{a-7}, \eqref{a-6} and \eqref{pam2} we get 
with $\E_{\bar{\mu}}[\Delta_{\phi}]=\infty$ for $-1<\delta\leq 0$, and 
\begin{align*}
\dfrac{2+\delta}{\delta}\Big(\frac{1}{2}+\E_{\bar{\mu}}[d_{\phi}^{-1}]\Big)-\big(1-\E_{\bar{\mu}}[d_{\phi}^{-1}]\big)
\leq\E_{\bar{\mu}}\big[\Delta_{\phi}\big]\leq \dfrac{2+\delta}{\delta}\Big(\frac{1}{2}+\E_{\bar{\mu}}[d_{\phi}^{-1}]\Big), \qquad \delta>0,
\end{align*}
where, by \eqref{a-4},
\begin{align*}
\E_{\bar{\mu}}[d_{\phi}^{-1}] = \sum_{k \in \N} \dfrac{(2+\delta)\Gamma(3+2\delta)\Gamma(k+\delta)}{k\Gamma(1+\delta)\Gamma(k+3+2\delta)}.
\end{align*}
This completes the proof for the first moment. Now we derive the second moment. 
For $-1<\delta\leq 0$, we have $\E_{\bar{\mu}}[\Delta_{\phi}^{2}]=\infty$ because $\E_{\bar{\mu}}[\Delta_{\phi}]=\infty$. Let us therefore consider $\delta> 0$. First we observe that
\begin{align*}
\E_{\bar{\mu}}\big[d_{\phi}^{2}\big] = \E_{\bar{\mu}}\big[\big(1+\ny(\phi)\big)^{2}\big] 
&= 1+\E_{\bar{\mu}}[\Gamma_{\phi}^{2}]\E_{\bar{\mu}}[\kappa^{2}(A_{\phi})]+3
\E_{\bar{\mu}}[\Gamma_{\phi}]\E_{\bar{\mu}}[\kappa(A_{\phi})] = \tfrac{4}{\delta}+6.
\end{align*}
Also, since
\begin{align*}
\E_{\bar{\mu}}\Big[U_{\phi_{1}}^{-\frac{2}{1+\delta}}\Big]
=\left\{
\begin{array}{ll}
\frac{1+\delta}{-1+\delta} , &\text{if } \delta>1, \\
\infty, &\text{if } 0<\delta\leq 1 ,
\end{array} 
\right.
\end{align*}
for $k \in \N$ we have
\begin{align*}
\E_{\bar{\mu}}\big[\kappa^{2}(A_{\phi_{1}})\mid d_{\phi}=k\big]
&=\E_{\bar{\mu}}\big[U_{\phi_{1}}^{-\frac{2}{1+\delta}}\big]\
E_{\bar{\mu}}\big[A_{\phi}^{-\frac{2}{2+\delta}}\mid d_{\phi}=k\big]\\
&\quad -2\E_{\bar{\mu}}\big[U_{\phi_{1}}^{-\frac{1}{1+\delta}}\big]
\E_{\bar{\mu}}\big[A_{\phi}^{-\frac{1}{2+\delta}}\mid d_{\phi}=k\big]+1\\
&\left\{\begin{array}{ll}
<\infty , &\text{if } \delta>1, \\
=\infty, &\text{if } 0<\delta\leq 1,
\end{array} 
\right.
\end{align*} 
which implies that
\begin{align*}
\E_{\bar{\mu}}\big[d_{1}^{2}\mid d_{\phi}=k\big]
\left\{\begin{array}{ll}
<\infty , &\text{if } \delta>1, \\
=\infty, &\text{if } 0<\delta\leq 1.
\end{array}
\right.
\end{align*}
Moreover, if $k\geq 2$, then from \eqref{pam3} and \eqref{pam2} we get that, for every $2\leq j\leq k$,
\begin{align*}
\E_{\bar{\mu}}\big[d_{j}^{2}\mid d_{\phi}=k\big]
&\leq\E_{\bar{\mu}}\bigg[\sum_{j=2}^{k}d_{j}^{2}\mid d_{\phi}=k\bigg]\\
&= \sum_{j=2}^{k}\E_{\bar{\mu}}[\Gamma_{\phi_{j}}]\, \E_{\bar{\mu}}\big[\kappa(A_{\phi_{j}})
\mid d_{\phi}=k\big]+\E_{\bar{\mu}}[\Gamma_{\phi_{2}}^{2}]\,\E_{\bar{\mu}}
\bigg[\sum_{j=2}^{k}\kappa^{2}(A_{\phi_{j}})\mid d_{\phi}=k\bigg]\\
&\leq k-1+(k-1)\,\E_{\bar{\mu}}[\Gamma_{\phi_{2}}^{2}]\,\E_{\bar{\mu}}
\Big[\tilde{U}_{1}^{-\frac{2}{2+\delta}}-2\tilde{U}_{1}^{-\frac{1}{2+\delta}}+1\mid d_{\phi}=k\Big]
<\infty.
\end{align*}
Consequently, if $\delta>1$, then by the Cauchy-Schwarz inequality we have $\E_{\bar{\mu}}[d_{i}d_{j}\mid d_{\phi}=k]<\infty$ for every $k\geq 1$ and $1\leq i,j\leq k$. Writing $\nu_{2}(k)=\P_{\bar{\mu}}\{d_{\phi}=k\}$, we obtain
\begin{align*}
&\E_{\bar{\mu}}\big[\Delta_{\phi}^{2}\big] = \sum_{k\in\N}\nu_{2}(k) \E_{\bar{\mu}}\big[\Delta_{\phi}^{2}\mid d_{\phi}=k\big]
=\sum_{k\in\N}\nu_{2}(k) \E_{\bar{\mu}}\bigg[\Big(\frac{1}{k}\sum_{j=1}^{k}d_{j}+1-k\Big)^{2}~\Big |~ d_{\phi}=k\bigg]\\
&=\sum_{k\in\N}\nu_{2}(k)\E_{\bar{\mu}}\bigg[\frac{1}{k^{2}}\sum_{j=1}^{k}d_{j}^{2}+\frac{1}{k^{2}}
\sum_{i=1}^{k}\sum_{j=1 \atop {j\neq i}}^{k}d_{i}d_{j}+1+k^{2}+\dfrac{2}{k}\sum_{j=1}^{k}d_{j}
-2\sum_{j=1}^{k}d_{j}-2k~\Big |~ d_{\phi}=k\bigg]\\
&=\sum_{k\in\N}\sum_{j=1}^{k}\frac{1}{k^{2}}\nu_{2}(k)\E_{\bar{\mu}}\big[d_{j}^{2}\mid d_{\phi}=k\big]
+\sum_{k\in\N}\sum_{i=1}^{k}\sum_{j=1 \atop {j\neq i}}^{k}\frac{1}{k^{2}}\nu_{2}(k)\E_{\bar{\mu}}
\big[d_{i}d_{j}\mid d_{\phi}=k\big]+1+\E_{\bar{\mu}}\big[d_{\phi}^{2}\big]\\
&\quad +\sum_{k\in\N}\sum_{j=1}^{k}\frac{2}{k}\nu_{2}(k)
\E_{\bar{\mu}}\big[d_{j}\mid d_{\phi}=k\big]-2\sum_{k\in\N}\sum_{j=1}^{k}\nu_{2}(k)
\E_{\bar{\mu}}\big[d_{j}\mid d_{\phi}=k\big]-2\E_{\bar{\mu}}\big[d_{\phi}\big].
\end{align*}
Therefore $\E_{\bar{\mu}}[\Delta_{\phi}^{2}]$ is finite if and only if $\delta>1$.


\paragraph{Proof of Theorem~\ref{thm3PAM}.}

Let $\kappa(\cdot)$ and $Y(\cdot,\cdot)$ be as in the proof of Theorem \ref{thm1PAM}. Also, let $d_{i}$ denote the number of children of $\phi_{i} = \phi i$. For $j \in \N$ and $a_{1}\in[0,1]$, we have
\begin{align*}
\P_{\bar{\mu}}\big\{d_{1}=j\mid A_{\phi_{1}}= a_{1}\big\}
&= \int_{0}^{\infty} \dd b\,\P_{\bar{\mu}}\big\{\ny(\phi_{1})=j-1\mid A_{\phi_{1}}=a_{1},\Gamma_{\phi_{1}}=b\big\}
\dfrac{b^{1+\delta} \ee^{-b}}{\Gamma(2+\delta)}\nonumber\\
&=  \int_{0}^{\infty} \dd b\,\P\big\{Y(a_{1},b)=j-1\big\}\dfrac{b^{1+\delta} 
\ee^{-b}}{\Gamma(2+\delta)}\nonumber\\
&=\dfrac{\Gamma(j+1+\delta)}{\Gamma(j)\Gamma(2+\delta)}\Big(1-a_{1}^{\frac{1}{2+\delta}}\Big)^{j-1}a_{1}.
\end{align*}
Also, the joint density of $(A_{\phi_{1}},A_{\phi})$ is
\begin{align*}
f_{(A_{\phi},A_{\phi_{1}})}(a,a_{1})=\dfrac{1+\delta}{2+\delta}\,a^{-\frac{1+\delta}{2+\delta}}\,
a_{1}^{-\frac{1}{2+\delta}}, \qquad a\in[0,1],~a_{1}\in[0,a].
\end{align*}
Using \eqref{a-04} and \eqref{a-4}, we get that, for $j,k \in \N$,
\begin{align}
\label{afterr1}
&\P_{\bar{\mu}}\big\{d_{1}=j\mid d_{\phi}= k\big\}\nonumber\\
&=\int_{[0,1]}\dd a\,\dfrac{\P_{\bar{\mu}}\big\{d_{\phi}=k\mid A_{\phi}= a\big\}}{\P_{\bar{\mu}}
\big\{d_{\phi}=k\big\}}\int_{[0,a]}\dd a_{1}\,\P_{\bar{\mu}}\big\{d_{1}=j\mid A_{\phi_{1}}= a_{1}\big\}
f_{(A_{\phi},A_{\phi_{1}})}(a,a_{1})\nonumber\\
&=\dfrac{\Gamma(k+3+2\delta)\Gamma(j+1+\delta)}{(2+\delta)^{2}\Gamma(1+\delta)\Gamma(3+2\delta)\Gamma(k)\Gamma(j)}
\int_{[0,1]}\dd a\,\Big(1-a^{\frac{1}{2+\delta}}\Big)^{k-1}
\int_{[0,a]}\dd a_{1}\,\Big(1-a_{1}^{\frac{1}{2+\delta}}\Big)^{j-1}a_{1}^{\frac{1+\delta}{2+\delta}}\nonumber\\
&=\dfrac{\Gamma(k+3+2\delta)\Gamma(j+1+\delta)}{\Gamma(1+\delta)\Gamma(3+2\delta)\Gamma(k)\Gamma(j)}
\int_{[0,1]}\dd v\,\big(1-v\big)^{k-1}v^{1+\delta}\int_{[0,a^{\frac{1}{2+\delta}}]}\dd u\,\big(1-u\big)^{j-1}u^{2+2\delta}.
\end{align}


\paragraph{Lower bound.} 
We first show that, for $k,x$ large enough,
\begin{align}
\label{d1lowerdelneq0}
\P_{\bar{\mu}}\big\{d_{1}\geq x\mid d_{\phi}=k\big\}
\geq \dfrac{2^{-(1+\delta)}}{\Gamma(3+\delta)\Gamma(4+2\delta)}
\, x^{-(1+\delta)},
\end{align}
and
\begin{align}
\label{d1lowerdelneq}
&\P_{\bar{\mu}}\big\{d_{1}\geq k^{2}(x+1)\mid d_{\phi}=k\big\}
\ge \dfrac{2^{-(1+\delta)}}{\Gamma(3+\delta)\Gamma(4+2\delta)}\, k^{-(1+\delta)} x^{-(1+\delta)}. 
\end{align}
Indeed, noting that $\mathrm{e}^{-1}\leq\liminf_{j\to \infty}(1-u)^{j-1}\mathbbm{1}_{[0,\frac{1}{j-1}]}(u)\leq 1$, we have
\begin{align*}
&\quad ~\liminf_{k\to\infty}\big(1-v\big)^{k-1}\,\mathbbm{1}_{[0,1]}(v)\geq\limsup_{k\to\infty}\mathbbm{1}_{[0,\frac{1}{k-1}]}(v),\\
&\liminf_{j\to \infty}\big(1-u\big)^{j-1}\,\mathbbm{1}_{[0,a^{\frac{1}{2+\delta}}]}(u)\geq\limsup_{j\to \infty}\mathbbm{1}_{[0,\frac{1}{j-1}]}(u).
\end{align*}
Also, via Stirling's formula we have
\begin{align}
\label{stirling}
\lim_{y\to\infty} \frac{\Gamma(y+\alpha)}{y^{\alpha}\,\Gamma(y)} = 1, \qquad\alpha\in\R.
\end{align}
Hence, we obtain that, for $j,k$ large enough,
\begin{align*}
&\P_{\bar{\mu}}\big\{d_{1}=j\mid d_{\phi}= k\big\}\\
&\geq\dfrac{1}{\Gamma(1+\delta)\Gamma(3+2\delta)}\,k^{3+2\delta}\,j^{1+\delta}\,
\int_{[0,\frac{1}{k-1}]}\dd v\,v^{1+\delta}\int_{[0,\frac{1}{j-1}]}\dd u\,u^{2+2\delta}\\
&\geq\dfrac{1}{(2+\delta)\Gamma(1+\delta)\Gamma(4+2\delta)}
\, k^{1+\delta}\,j^{-(2+\delta)}.
\end{align*}
This, together with \eqref{truncatedRiemann} implies that, for $k,x$ large enough,
\begin{align*}
\P_{\bar{\mu}}\big\{d_{1}\geq x\mid d_{\phi}=k\big\}
&\geq \dfrac{1}{(2+\delta)\Gamma(1+\delta)\Gamma(4+2\delta)}
\, k^{1+\delta}\sum_{j=\lfloor 2x\rfloor}^{\infty} j^{-(2+\delta)}\\
&\geq \dfrac{2^{-(1+\delta)}}{\Gamma(3+\delta)\Gamma(4+2\delta)}
\, x^{-(1+\delta)},
\end{align*}
which completes the proof of \eqref{d1lowerdelneq0}, and
\begin{align*}
\P_{\bar{\mu}}\big\{d_{1}\geq k^{2}(x+1)\mid d_{\phi}=k\big\}
&\geq
\dfrac{1}{(2+\delta)\Gamma(1+\delta)\Gamma(4+2\delta)}
\, k^{1+\delta}\sum_{j=\lfloor 2k^{2}x\rfloor}^{\infty} j^{-(2+\delta)}\\&
\geq \dfrac{2^{-(1+\delta)}}{\Gamma(3+\delta)\Gamma(4+2\delta)}
\, k^{-(1+\delta)} x^{-(1+\delta)},
\end{align*}
which completes the proof of \eqref{d1lowerdelneq}.

Using \eqref{a-4} and \eqref{d1lowerdelneq} we get, for $x$ large enough,
\begin{align*}
\P_{\bar{\mu}}\{\Delta_{\phi}\geq x\} 
&=\sum_{k\in\N}\P_{\bar{\mu}}\big\{d_{\phi}=k \big\}
\P_{\bar{\mu}}\Big\{\sum_{j=1}^{k}d_{j}\geq k x +k(k-1)\mid d_{\phi}=k\Big\}\\
&\geq \sum_{k\in\N}\P_{\bar{\mu}}\big\{d_{\phi}=k \big\}
\P_{\bar{\mu}}\Big\{d_{1}\geq k^{2}(x+1)\mid d_{\phi}=k\Big\}\\
&\geq
C_{1,\delta}\,x^{-(1+\delta)},
\end{align*}
where
\begin{align*}
&C_{1,\delta} \leq \dfrac{(2+\delta)\Gamma(3+2\delta)\,2^{-(1+\delta)}}{\Gamma(1+\delta)
\Gamma(3+\delta)\Gamma(4+2\delta)}
\sum_{k\in\N}k^{-(1+\delta)} \dfrac{\Gamma(k+\delta)}{\Gamma(k+3+2\delta)}\in(0,\infty).
\end{align*}

\paragraph{Upper bound.} 

The proof of the upper bound requires more intricate bounds. 

First we show that, for $j,k\in\N$,
\begin{align}
\label{claim1}
 \P_{\bar{\mu}}\big\{d_{1}=j\mid d_{\phi}= k\big\} 
\le \dfrac{(1+\delta)\Gamma(j+1+\delta)\Gamma(k+3+2\delta)}{\Gamma(j+3+2\delta)\Gamma(k+2+\delta)}.
\end{align}
Indeed, for $j,k\in\N$, by \eqref{afterr1},
\begin{align*}
&\P_{\bar{\mu}}\big\{d_{1}=j\mid d_{\phi}= k\big\}\\
&\leq\dfrac{\Gamma(k+3+2\delta)\Gamma(j+1+\delta)}{\Gamma(1+\delta)\Gamma(3+2\delta)\Gamma(k)\Gamma(j)}
\int_{[0,1]}\dd v\,\big(1-v\big)^{k-1}v^{1+\delta}\int_{[0,1]}\dd u\,\big(1-u\big)^{j-1}u^{2+2\delta}\\
&=\dfrac{(1+\delta)\Gamma(j+1+\delta)\Gamma(k+3+2\delta)}{\Gamma(j+3+2\delta)\Gamma(k+2+\delta)},
\end{align*}
which proves \eqref{claim1}.

Next, we provide an upper bound for $\P_{\bar{\mu}}\big\{d_{1}\geq x\mid d_{\phi}=k\big\}$.
Abbreviate
\begin{equation*}
\tilde\gamma_{k,\delta} = \dfrac{(1+\delta)\Gamma(k+3+2\delta)}{\Gamma(k+2+\delta)}
\end{equation*}
Using \eqref{truncatedRiemann2}, \eqref{stirling} and \eqref{claim1}, we see that, for $k\in\N$ and $x$ large enough,
\begin{align}
\label{d1upperdelneq}
\P_{\bar{\mu}}\big\{d_{1}\geq x\mid d_{\phi}=k\big\}
&\leq \tilde\gamma_{k,\delta}
\sum_{j=\lceil x\rceil}^{\infty}\dfrac{\Gamma(j+1+\delta)}{\Gamma(j+3+2\delta)}\nonumber\\
&
\leq \tilde\gamma_{k,\delta}
\sum_{j=\lceil x\rceil}^{\infty}j^{-(2+\delta)} 
\leq \dfrac{\Gamma(k+3+2\delta)}{\Gamma(k+2+\delta)}(x-1)^{-(1+\delta)}.
\end{align}
From \eqref{a-4}, it can be easily checked that $\E_{\bar{\mu}}\big[\frac{\Gamma(d_{\phi}+3+2\delta)}{\Gamma(d_{\phi}+2+\delta)}\big]= C(\delta)\sum_{k\in\N}\frac{1}{(k+\delta)(k+1+\delta)}<\infty$ for some constant $C(\delta)$ depending on only $\delta$. Hence, \eqref{d1lowerdelneq0} and \eqref{d1upperdelneq} imply that 
\begin{align*}
\P_{\bar{\mu}}\big\{d_{1}\geq x\big\}\asymp x^{-(1+\delta)},\quad\quad x\rightarrow\infty .
\end{align*}

Next, we provide an upper bound for $\P_{\bar{\mu}}\Big\{\sum_{i=2}^{k}d_{i}\geq (k-1) x \mid d_{\phi}=k\Big\}$. To that end, fix $k\geq 2$, $2\leq i\leq k$ and $j\geq 0$.  For $a_{i}\in[0,1]$,
\begin{align*}
\P_{\bar{\mu}}\big\{d_{i}=j\mid A_{\phi_{i}}=a_{i}\big\}
&= \int_{0}^{\infty} \dd b_{i}\,\P_{\bar{\mu}}\big\{\ny(\phi_{i})=j\mid A_{\phi_{i}}=a_{i},\Gamma_{\phi_{i}}=b_{i}\big\}
\dfrac{b_{i}^{\delta} \ee^{-b_{i}}}{\Gamma(1+\delta)}\\
&= \int_{0}^{\infty} \dd b_{i}\,\P\big\{Y(a_{i},b_{i})=j\big\}
\dfrac{b_{i}^{\delta} \ee^{-b_{i}}}{\Gamma(1+\delta)}\\
&=\dfrac{\Gamma(j+1+\delta)}{\Gamma(j+1)\Gamma(1+\delta)}\Big(1-a_{i}^{\frac{1}{2+\delta}}\Big)^{j}
a_{i}^{\frac{1+\delta}{2+\delta}}.
\end{align*}
Also, for $a\in[0,1]$, conditioned on $d_{\phi}= k,A_{\phi}= a$, $A_{\phi_{i}}$ is distributed as the $(i-1)$-th order statistic of a sample of $k-1$ i.i.d.\ uniform$[a,1]$ random variables. The conditional density of $A_{\phi_{i}}$ given  $d_{\phi}= k$ and $A_{\phi}= a$ is given by
\begin{align*}
f_{A_{\phi_{i}}\mid d_{\phi}= k,A_{\phi}= a}(a_{i}) = \dfrac{\Gamma(k)}{\Gamma(i-1)\Gamma(k-i+1)}
a_{i}^{i-2}(1-a-a_{i})^{k-i}(1-a)^{-(k-1)}, \qquad a_{i}\in[a,1].
\end{align*}
Since
\begin{align*}
\dfrac{\Gamma(k)}{\Gamma(i-1)\Gamma(k-i+1)}=\frac{i(i-1)}{k}\binom{k}{i}\leq k\sum_{m=0}^{k}\binom{k}{m}= k\,2^{k},
\end{align*}
it follows that, for $a\in[0,1]$,
\begin{align*}
&\int_{[a,1]}\dd a_{i}\,\P_{\bar{\mu}}\big\{d_{i}=j\mid A_{\phi_{i}}=a_{i}\big\}f_{A_{\phi_{i}}\mid d_{\phi}= k,A_{\phi}= a}(a_{i})\\
&\leq \dfrac{ k\,2^{k}\,\Gamma(j+1+\delta)}{\Gamma(j+1)\Gamma(1+\delta)}(1-a)^{-i+1}
\int_{[a,1]}\dd a_{i}\,\Big(1-a_{i}^{\frac{1}{2+\delta}}\Big)^{j}a_{i}^{\frac{1+\delta}{2+\delta}}\\
&\leq \dfrac{(2+\delta)k\,2^{k}\,\Gamma(j+1+\delta)}{\Gamma(j+1)\Gamma(1+\delta)}(1-a)^{-i+1}
\int_{[0,1]}\dd u\,\big(1-u\big)^{j}u^{2+2\delta}\\
&\leq\dfrac{(2+\delta)k\,2^{k}\,\Gamma(j+1+\delta)\Gamma(3+2\delta)}{\Gamma(1+\delta)\Gamma(j+4+2\delta)}(1-a)^{-k+1}.
\end{align*}
Abbreviate
\begin{equation*}
\hat\gamma_{k,\delta} = \dfrac{2^{k}\,\Gamma(k+3+2\delta)}{\Gamma(1+\delta)\Gamma(k)}.
\end{equation*}
Using \eqref{a-04} and \eqref{a-4}, we obtain
\begin{align*}
&\P_{\bar{\mu}}\big\{d_{i}=j\mid d_{\phi}= k\big\}\\
&=\int_{[0,1]}\dd a\,\dfrac{\P_{\bar{\mu}}\big\{d_{\phi}=k\mid A_{\phi}= a\big\}}
{\P_{\bar{\mu}}\big\{d_{\phi}=k\big\}}\int_{[a,1]}\dd a_{i}\,
\P_{\bar{\mu}}\big\{d_{i}=j\mid A_{\phi_{i}}=a_{i}\big\}f_{A_{\phi_{i}}\mid d_{\phi}= k,A_{\phi}= a}(a_{i})\\
&\leq \dfrac{k\,\hat\gamma_{k,\delta}\,\Gamma(j+1+\delta)}{\Gamma(j+4+2\delta)}
\int_{[0,1]}\dd a\, a^{\frac{1+\delta}{2+\delta}}
\leq \dfrac{(2+\delta)k\,\hat\gamma_{k,\delta}\,\Gamma(j+1+\delta)}{(3+2\delta)\,\Gamma(j+4+2\delta)}.
\end{align*}
Hence, using \eqref{truncatedRiemann2} and \eqref{stirling}, for $k \geq 2$ and $x$ large enough, we obtain
\begin{align}
\label{diupperneq}
&\P_{\bar{\mu}}\Big\{\sum_{i=2}^{k}d_{i}\geq (k-1) x \mid d_{\phi}=k\Big\}
\leq \sum_{i=2}^{k}\P_{\bar{\mu}}\big\{d_{i}\geq x\mid d_{\phi}=k\big\}\nonumber\\
&\leq \sum_{i=2}^{k}\P_{\bar{\mu}}\big\{d_{i}\geq \lceil (x^{1+\delta}k^{2}\,2^{2k})^{\frac{1}{2+\delta}}\rceil +1\mid d_{\phi}=k\big\}\nonumber\\
&\leq k(k-1)\,\hat\gamma_{k,\delta}
\sum_{j=\lceil (x^{1+\delta}k^{2}\,2^{2k})^{\frac{1}{2+\delta}}\rceil +1}^{\infty}j^{-(3+\delta)}
\leq\dfrac{\Gamma(k+3+2\delta)}{\Gamma(1+\delta)\Gamma(k)}2^{-k}x^{-(1+\delta)}.
\end{align}

Similarly, abbreviating
\begin{align*}
&C_{2,\delta}=\dfrac{(2+\delta)\Gamma(3+2\delta)}{\Gamma(1+\delta)}
\bigg(\sum_{k\in\N}\dfrac{\Gamma(k+\delta)}{\Gamma(k+2+\delta)}+\dfrac{1}{\Gamma(1+\delta)}
\sum_{k\in\N\setminus\{1\}}\dfrac{\Gamma(k+\delta)}{\Gamma(k)}2^{-k}\bigg)\in(0,\infty),
\end{align*}
and using \eqref{a-4}, \eqref{d1upperdelneq} and \eqref{diupperneq}, by the dominated convergence theorem we get, for $x$ large enough,
\begin{align*}
&\P_{\bar{\mu}}\{\Delta_{\phi}\geq x\}\\
& \leq\sum_{k\in\N}\P_{\bar{\mu}}\big\{d_{\phi}=k \big\}
\P_{\bar{\mu}}\Big\{\sum_{j=1}^{k}d_{j}\geq k x \mid d_{\phi}=k\Big\}\\
&\leq \sum_{k\in\N}\P_{\bar{\mu}}\big\{d_{\phi}=k \big\}\P_{\bar{\mu}}\Big\{d_{1}\geq x \mid d_{\phi}=k\Big\}\\
&+\sum_{k\in\N\setminus\{1\}}\P_{\bar{\mu}}\big\{d_{\phi}=k \big\}\P_{\bar{\mu}}\Big\{\sum_{j=2}^{k}d_{j}\geq (k-1) x \mid d_{\phi}=k\Big\}
\leq C_{2,\delta}\,(x-1)^{-(1+\delta)}.
\end{align*}


\bibliographystyle{amsplain}
\bibliography{Reference.bib}


\end{document}